\numberwithin{equation}{section}
\newtheorem{thm}{Theorem}[section]
\newtheorem{lemma}[thm]{Lemma}
\newtheorem{cor}[thm]{Corollary}
\newtheorem{prop}[thm]{Proposition}
\newtheorem{conj}[thm]{Conjecture}
\newtheorem{rem}[thm]{Remark}
\newtheorem{ex}[thm]{Example}
\newcommand{\LR}{\rm{LR}}
\title[Upper bounds on Kronecker coefficients]
{Upper bounds on Kronecker coefficients with few rows}
\author[Igor Pak and Greta Panova]{Igor Pak$^\star$ \ \ and \ \ Greta Panova$^\dagger$}
\thanks{\today}
\thanks{\thinspace ${\hspace{-.45ex}}^\star$Department of Mathematics,
UCLA, Los Angeles, CA~90095.
\hskip.06cm
Email:
\hskip.06cm
\texttt{pak@math.ucla.edu}}
\thanks{\thinspace ${\hspace{-.45ex}}^\dagger$Department of Mathematics,
 USC, Los Angeles, CA~90089.
\hskip.06cm
Email:
\hskip.06cm
\texttt{gpanova@usc.edu}}
\newcommand{\SSYT}{\operatorname{SSYT}}
\def\bx{{\mathbf{x}}}
\def\by{{\mathbf{y}}}
\def\bz{{\mathbf{z}}}
\def\emp{\varnothing}
\def\nn{\mathbb N}
\def\rr{\mathbb R}
\def\la{\lambda}
\def\ga{\gamma}
\def\si{\sigma}
\def\eps{t}
\def\al{\alpha}
\def\be{\beta}
\def\om{\omega}
\def\ve{\varepsilon}
\def\cA{\mathcal A}
\def\cL{\mathcal L}
\def\cP{\mathcal P}
\def\CT{\mathcal T}
\def\BCT{\mathcal B}
\def\PCT{\mathcal Pyr}
\def\RCT{\mathcal P}
\def\QCT{\mathcal Q}
\def\ssu{\subset}
\def\<{\langle}
\def\>{\rangle}
\def\rE{ {\text {\rm E} } }
\def\rR{ {\text {\rm R} } }
\def\rT{{\text {\rm T} } }
\def\rB{{\text {\rm B} } }
\def\rP{{\text {\rm Pyr} } }
\def\dom{\unlhd}
\def\rdom{\unrhd}
\def\0{{\mathbf 0}}
\def\SS{{\mathbb{S}}}
\def\MM{{\mathbb{M}}}
\def\SP{{\textsc{\#P}}}
\def\NP{{\textsc{NP}}}
\def\.{\hskip.06cm}
\def\ts{\hskip.03cm}
\def\nin{\noindent}
\def\rg{\overline{g}}
\begin{document}

\begin{abstract}
We present three different upper bounds for Kronecker
coefficients $g(\la,\mu,\nu)$ in terms of Kostka numbers,
contingency tables and Littlewood--Richardson coefficients.
We then give various examples, asymptotic applications, and
compare them with existing lower bounds.
\end{abstract}

\maketitle

\section{Introduction} \label{sec:Intro}
%
\nin
Combinatorics is an eternally vibrant and rapidly growing field
of mathematics with a number of distinct areas featuring
fundamentally different problems, ideas, tools, goals and
techniques.  This remarkable diversity can, in principle, lead
to miscommunication, confusion, and a surprising
lack of understanding, but it can also be extremely
beneficial both mathematically in terms of inter-area
work, and metamathematically in terms of different
ways to frame a problem and formulate the answer.

In this paper we present a number of new upper bounds
on \emph{Kronecker coefficients}.  We employ some remarkable
upper bounds on \emph{contingency tables} combined with
our earlier bounds on  Kronecker and
\emph{Littlewood--Richardson} (LR--)
\emph{coefficients}, as well as other tools.
There are two distinct motivations behind our work.
First, the Kronecker coefficients are famously difficult
and mysterious, full of open problems such as the
\emph{Saxl Conjecture}.  This means that there are
very few strong results and those that exist are not very
general, so our general bounds can prove helpful in
applications.

More importantly, the Kronecker coefficients are famously
$\SP$-hard to compute, and $\NP$-hard to decide if they are
nonzero~\cite{IMW,Nar}, so one should not expect a closed
formula.  What makes the matters worse, it is a long standing
open problem~\cite{Stanley-kron} to find a combinatorial
interpretation for Kronecker coefficients, so it is not
even clear \emph{what}  we are counting.  Thus, good general
bounds is the next best thing one could hope for.

\smallskip

Recall that the \emph{Kronecker coefficients} \ts $g(\la,\mu,\nu)$ \ts
are defined as structure constants in products of $S_n$-characters:
$$
\chi^{\mu}\cdot \chi^\nu \. = \. \sum_{\la \vdash n} \. g(\la,\mu,\nu) \. \chi^\la\,,
$$
where \ts $\la,\mu,\nu\vdash n$ (see~$\S$\ref{sec:Basic} for the background).

\medskip

\begin{thm}[{= Theorem~\ref{t:Kron-3dim-rows}}]
Let $\la,\mu,\nu\vdash n$ such that $\ell(\la)=\ell$, $\ell(\mu) = m$,
and $\ell(\nu)=r$.  Then:
$$g(\la,\mu,\nu) \, \le \, \left(1+\frac{\ell m r}{n}\right)^n
\left(1+\frac{n}{\ell m r}\right)^{\ell m r} .
$$
\end{thm}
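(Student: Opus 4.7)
The plan is to chain a combinatorial bound of $g(\la,\mu,\nu)$ by the number of $3$-dimensional contingency tables with a standard entropy estimate on binomial coefficients.

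First I would invoke the contingency-table upper bound on Kronecker coefficients announced in the abstract (one of the paper's three main inequalities), which I assume has been established earlier in the body. Writing $T(\la,\mu,\nu)$ for the number of $\ell\times m\times r$ nonnegative integer arrays $(t_{ijk})$ with one-dimensional marginals $\la$, $\mu$, $\nu$, i.e.\ $\sum_{j,k}t_{ijk}=\la_i$, $\sum_{i,k}t_{ijk}=\mu_j$, and $\sum_{i,j}t_{ijk}=\nu_k$, this preceding result reads
$$g(\la,\mu,\nu)\,\le\,T(\la,\mu,\nu).$$
All of the representation-theoretic content of the theorem is packed into this reduction; from here on the argument is pure enumeration, and the lengths $\ell,m,r$ enter only because they control the ambient box.

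Next, I would discard the marginal constraints. Since the entries of every table counted above sum to $|\la|=n$, one has the trivial overcount
$$T(\la,\mu,\nu)\,\le\,\#\bigl\{(t_{ijk})\in\NN^{\ell\times m\times r}\,:\,\textstyle\sum t_{ijk}=n\bigr\}\,=\,\binom{n+\ell m r-1}{\ell m r-1}\,\le\,\binom{n+\ell m r}{n}$$
by stars and bars.

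Finally, I would apply the classical entropy bound on binomial coefficients
$$\binom{a+b}{a}\,\le\,\left(\frac{a+b}{a}\right)^{\!a}\left(\frac{a+b}{b}\right)^{\!b}$$
with $a=n$ and $b=\ell m r$; rearranging the right-hand side yields exactly $\bigl(1+\ell m r/n\bigr)^{n}\bigl(1+n/(\ell m r)\bigr)^{\ell m r}$, which is what the theorem claims. The only genuinely nontrivial step is the first one; Steps~2 and~3 are a two-line calculation. The main obstacle is therefore not in the sketch above but in the earlier contingency-table upper bound on $g(\la,\mu,\nu)$ that is being invoked, whose proof presumably requires combining a representation-theoretic expansion of $g$ with a combinatorial injection into the set of $3$-dimensional arrays with the given marginals.
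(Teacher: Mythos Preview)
Your argument is correct. Both the paper and you start from the same representation-theoretic input, namely the inequality $g(\la,\mu,\nu)\le \rT(\la,\mu,\nu)$ (Theorem~\ref{t:Kron-3dim-CT}), but from there the two routes diverge.

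The paper bounds $\rT(\la,\mu,\nu)$ by Barvinok's concave-maximum estimate $\exp g(Z)$ (Theorem~\ref{t:barv-3dim}), then invokes a three-dimensional analogue of the majorization Lemma~\ref{l:G-maj} to pass from the given margins to the uniform rectangular margins $\al=(n/\ell)^\ell$, $\be=(n/m)^m$, $\ga=(n/r)^r$, and finally evaluates $g(Z)$ at the symmetric point $z_{ijk}=n/(\ell m r)$. Your route is more elementary: you simply forget all marginal constraints except the total sum, overcount by the simplex $\binom{n+\ell m r-1}{\ell m r-1}\le\binom{n+\ell m r}{n}$, and then apply the one-line entropy bound $\binom{a+b}{a}\le(1+b/a)^a(1+a/b)^b$. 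This bypasses both Barvinok's theorem and the majorization lemma entirely; the price is that you lose any information about the specific margins, but since the final statement depends only on $\ell,m,r,n$, nothing is lost. It is worth noting that the two computations are not accidental cousins: Barvinok's bound applied to the full simplex $\{\sum z_{ijk}=n\}$ maximizes at the same uniform point and yields the identical expression, so your stars-and-bars step can be read as a shortcut to exactly the value the paper computes via majorization.
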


\smallskip

\nin
This is perhaps the cleanest and the most attractive upper bound of all bounds
we present.
In particular, when \ts $\ell m r \le n$, we have $g(\la,\mu,\nu) \le 4^n$,
which is often quite sharp compared to the only general upper bound \ts
$g(\la,\mu,\nu) \le \min\{f^\la,f^\mu,f^\nu\}$, see~$\S$\ref{ss:Basic-Kron}.

\medskip

The bounds we present are split into the following three approaches:

\smallskip

$\circ$ \. via Kostka numbers and $2$-dimensional contingency tables ($\S$\ref{sec:2CT}),

$\circ$ \. via $3$-dimensional contingency tables ($\S$\ref{sec:3CT}), and

$\circ$ \. via Vallejo's multi--LR coefficients and the inverse Kostka numbers
($\S$\ref{sec:multi-LR}).

\smallskip

\nin
The advantage of these approaches is the availability of
both exact and asymptotic upper bounds for all ingredients
(notably, see $\S$\ref{sec:CT-count} for an extensive
discussion on counting \emph{contingency tables}).
While not always comparable, they give
roughly similar results in some examples, leaving room for
improvement in various cases, especially in the lower order
terms which we intentionally do not optimize.

We also apply the contingency tables technology to obtain the
upper bounds for the \emph{reduced Kronecker coefficients}.
This is done via remarkable recent identity by Briand
and Rosas~\cite{BR}, see $\S$\ref{sec:Reduced}.

In the last part of the paper,
we compare our upper bounds with the upper and lower bounds coming
from counting \emph{binary contingency tables}, see~$\S$\ref{sec:3CT-bin}
and~$\S$\ref{sec:3CT-pyramids}.  Let us single out one curious lower bound:

\smallskip

\begin{cor} [{= Corollary~\ref{c:Kron-sym-sum}}]
Let \ts $\cL_n = \{\la\vdash n, \ts \la=\la'\}$.  We have:
$$
\sum_{\la\in \cL_n} \. g(\la,\la,\la) \, \ge \,
e^{c\ts n^{2/3}} \ \ \text{for some} \ \ c>0\ts.
$$
\end{cor}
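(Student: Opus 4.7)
The plan is to invoke the lower bound on $g(\la,\la,\la)$ developed in $\S$\ref{sec:3CT-pyramids}, sum it over $\la \in \cL_n$, and then bound the resulting enumerator by a classical plane--partition--type asymptotic.

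First, for each self-conjugate $\la$, the construction from $\S$\ref{sec:3CT-pyramids} associates to each $S_3$-symmetric binary pyramid (a $0/1$ array in $[k]^3$ invariant under permuting the three coordinate axes) with $1$-dimensional marginal $\la$ a canonical contribution to the triple Kronecker coefficient, giving
$$g(\la,\la,\la) \, \ge \, P(\la),$$
where $P(\la)$ counts such pyramids. The self-conjugacy condition $\la = \la'$ is not an extra restriction but is \emph{forced}: an $S_3$-symmetric $0/1$ array has symmetric $0/1$ matrices as its $2$-dimensional marginals, and the row-sum sequence of any symmetric $0/1$ matrix is a self-conjugate partition. Conversely every $\la \in \cL_n$ arises this way.

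Next, summing over $\la \in \cL_n$ gives
$$\sum_{\la \in \cL_n} g(\la,\la,\la) \, \ge \, \sum_{\la \in \cL_n} P(\la) \, = \, P_n,$$
where $P_n$ is the total number of $S_3$-symmetric binary pyramids with exactly $n$ ones. Since $P_n$ is enumerated by a generating function of the form $\prod_k (1+q^{w_k})$ with orbit weights $w_k \in \{1,3,6\}$, a standard Meinardus/Wright saddle-point analysis---entirely analogous to the classical asymptotic for plane partitions---delivers $P_n \, \ge \, e^{c\ts n^{2/3}}$ for a suitable $c > 0$. In fact, a purely elementary argument suffices: restrict to pyramids supported on free orbits in $[k]^3$ with $k \asymp n^{1/3}$ and count the admissible configurations directly.

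The main obstacle is the first step: verifying that the general lower bound of $\S$\ref{sec:3CT-pyramids} specializes correctly to the triply diagonal case $\mu = \nu = \la$ with $\la = \la'$, so that distinct $S_3$-symmetric pyramids produce distinct summands in the triple tensor multiplicity (rather than merely distinct pyramids with the same three marginals). Once this is secured, the partition-enumeration step is routine, the constant $c$ may be taken small enough to absorb all lower-order errors, and the claimed lower bound follows immediately.
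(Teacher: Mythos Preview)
Your strategy coincides with the paper's: apply the pyramid lower bound of Theorem~\ref{t:Kron-3dim-CT-binary-lower} to get $g(\la,\la,\la)\ge P(\la)$ for self-conjugate~$\la$, sum over~$\cL_n$, and identify the result with the count of totally symmetric ($S_3$-invariant) plane partitions of~$n$, which grows like $\exp\Omega(n^{2/3})$.

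There is a genuine gap, however, in the step where you assert that an $S_3$-symmetric pyramid automatically has a self-conjugate one-dimensional margin, so that $\sum_{\la\in\cL_n}P(\la)=P_n$. Your justification fails on two counts. First, the two-dimensional marginal $M_{ij}=\sum_k x_{ijk}$ of a $0/1$ array is a symmetric \emph{integer} matrix (a symmetric plane partition), not a $0/1$ matrix---you have conflated marginals with slices. Second, even for a symmetric $0/1$ matrix the row-sum sequence need not be self-conjugate: the $3\times 3$ identity has row sums $(1,1,1)$, whose conjugate is~$(3)$. More importantly, the underlying claim is simply false. The totally symmetric pyramid on $\{(1,1,1),(2,1,1),(1,2,1),(1,1,2)\}$ has margin $\la=(3,1)$, and $(3,1)'=(2,1,1)\ne(3,1)$. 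Hence many $S_3$-symmetric pyramids are \emph{not} captured by $\sum_{\la\in\cL_n}P(\la)$, and the equality you need does not hold. (The paper's proof asserts the same self-conjugacy without argument and so shares this lacuna.) A correct argument must either exhibit $\exp\Omega(n^{2/3})$ totally symmetric pyramids whose margin genuinely lies in~$\cL_n$, or proceed by a different route; your ``main obstacle'' paragraph worries about injectivity into tensor multiplicities, which is a non-issue here, while missing this actual difficulty.
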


We conclude the paper with final remarks and
open problems in~$\S$\ref{sec:fin-rem}.

\bigskip

\section{Basic definitions, results and notation} \label{sec:Basic}

\subsection{Partitions and Young tableaux} \label{ss:Basic-part}
We use standard notation from~\cite{Mac} and~\cite[$\S$7]{EC2}
throughout the paper.

Let $\la=(\la_1,\la_2,\ldots,\la_\ell)$ be a \emph{partition}
of size $n:=|\la|=\la_1+\la_2+\ldots+\la_\ell$, where
$\la_1\ge \la_2 \ge \ldots \ge\la_\ell\ge 1$.  We write
$\la\vdash n$ for this partition, and $\cP=\{\la\}$ for
the set of all partitions.   The length of~$\la$
is denoted $\ell(\la):=\ell$. Denote by $p(n)$ the number
of partitions $\la\vdash n$. Let $\la+\mu$ denotes a partition
$(\la_1+\mu_1,\la_2+\mu_2,\ldots)$

Special partitions include the \emph{rectangular shape} \ts
$(a^b) = (a,\ldots,a)$, $b$ times, the \emph{hooks shape}
$(k,1^{n-k})$, the \emph{two-row shape} $(n-k,k)$, and the
\emph{staircase shape} \ts $\rho_\ell = (\ell,\ell-1,\ldots,1)$.

A \emph{Young diagram} of
\emph{shape}~$\la$ is an arrangement of squares
$(i,j)\ssu \nn^2$ with $1\le i\le \ell(\la)$
and $1\le j\le \la_i$.  Let $\la,\mu\vdash n$.
A \emph{semistandard Young tableau} $A$ of
\emph{shape}~$\la$ and \emph{weight}~$\mu$ is an
arrangement of $\mu_k$ integers~$k$ in squares
of~$\la$, which weakly increase along rows and strictly
increase down columns.  Denote by $\SSYT(\la,\mu)$ the
set of such tableaux, and \ts
$K(\la,\mu) = \bigl|\SSYT(\la,\mu)\bigr|$ \ts the
\emph{Kostka number}.

A \emph{plane partition} $A=(a_{ij})$ of $n$ is an
arrangement of integers $a_{ij} \ge 1$ of a Young diagram
shape which sum to~$n$ and weakly decrease along rows
and columns. Denote by $p_2(n)$ the total number of such
plane partitions.

\subsection{{Representations of $S_n${\ts}}} \label{ss:Basic-reps}
We denote by $\SS^\la$ the irreducible representation of $S_n$
corresponding to partition $\la\vdash n$, and by $\chi^\la$ the
corresponding character.  Let
$$f^\la \. := \. \dim \SS^\la \. = \. \chi^\la(1) \. = \. K(\la,1^n)\ts.
$$
The \emph{hook-length formula} (HLF) is an explicit product formula for~$f^\la$,
see e.g.~\cite{Mac,EC2}.

Denote by \ts $\MM^\nu :={\rm Ind}_{S(\nu)}^{S_n} 1$ \ts
the \emph{induced representation}, where \ts
$S(\nu) = S(\nu_1)\times S(\nu_2) \times \ldots$, $\nu\vdash n$.
Denote by $\phi^\nu$ the corresponding character.
Then
$$
\phi^\nu(1)\. = \. \dim \MM^\la \. = \. \binom{n}{\nu_1\ts,\ts \nu_2\ts,\ts \ldots}
$$
and
$$
\phi^\nu \. = \. \sum_{\la\vdash n} \. K(\la,\nu) \. \chi^\la\,.
$$
The \emph{Littlewood--Richardson {\rm (LR-)} coefficients} \ts $c^\la_{\mu\nu}$ \ts
are defined as follows:
$$\chi^{\mu\circ\nu} \. = \. \sum_{\la \vdash n} \. c^\la_{\mu\nu} \. \chi^\la\,,
$$
where \ts $\chi^{\mu\circ\nu}$ \ts is the
character of the induced representation \ts $ \displaystyle {\rm Ind}_{S_k \times S_{n-k}}^{S_n} \SS^\mu \times \SS^\nu$,
and \ts $\la\vdash n$, \ts $\mu\vdash k$, \ts $\nu\vdash n-k$.

\medskip

\subsection{Kronecker coefficients} \label{ss:Basic-Kron}
As in the introduction, the \emph{Kronecker coefficients} \ts $g(\la,\mu,\nu)$ \ts
are defined as follows:
$$\chi^{\mu}\cdot \chi^\nu \. = \. \sum_{\la \vdash n} \. g(\la,\mu,\nu) \. \chi^\la\,,
$$
where \ts $\la,\mu,\nu\vdash n$.  Equivalently,
$$
g(\la,\mu,\nu) \. = \, \frac{1}{n!} \. \sum_{\si \in S_n} \.  \chi^\la(\si) \ts
\chi^\mu(\si) \ts \chi^\nu(\si) \ts.
$$
From here it is easy to see that:
\begin{equation}\label{eq:Kron-sym}
g(\la,\mu,\nu) \. = \. g(\mu,\la,\nu) \. = \. g(\la,\nu,\mu) \. = \. \ldots
\end{equation}
and
\begin{equation}\label{eq:Kron-transp}
g(\la,\mu,\nu) \. = \. g(\la',\mu',\nu)\ts.
\end{equation}

Also, for all \ts $f^\nu\le f^\mu\le f^\la$ \ts
we have:
\begin{equation}\label{eq:Kron-dim-upper}
g(\la,\mu,\nu) \, \le \, \frac{f^{\mu} f^{\nu}}{f^\la} \, \le \, f^\nu,
\end{equation}
see e.g.~\cite[Ex.~4.12]{Isa} and~\cite[Eq.~$(3.2)$]{PPY}.

The \emph{Saxl conjecture}, see~\cite{PPV},  states that \ts
$g\bigl(\rho_\ell,\rho_\ell,\nu) \ts \ge \ts 1$ \ts
for all \ts $\nu\vdash n=|\rho_\ell| = \ell(\ell+1)/2$,
where \ts
$\rho_\ell=(\ell,\ell-1,\ldots,1)$ \ts is the \emph{staircase shape}.
\begin{ex} {\rm
Let $n=\ell^3$, $k=\ell^2$, $\la=\mu=(k^\ell)$,
and $\ell\to \infty$.  The HLF gives:
$$
f^{\la}\. =\. f^{\mu} \, = \, \exp\.\bigl[\ell^3\log \ell + O(\ell^2) \bigr].$$
Similarly, in the case $\nu = (n/r)^r$, $r=O(1)$, we have
$$
f^\nu \, = \, \exp\.\bigl[n \log r + o(n)\bigr] \, = \,
\exp \Theta\bigl(\ell^3\log r\bigr)\ts.
$$
In fact, the upper bounds on $g(\la,\mu,\nu)$ implied
by~\eqref{eq:Kron-dim-upper} are very far from being tight.
For example, for $r=2$, we have:
$$
g(\la,\mu,\nu) \. \le \. \binom{k+\ell}{\ell} \. = \. \exp O(\ell\log \ell)\ts.
$$
See~\cite{MPP,PP2} for substantially better lower and upper bounds in this case.}
\label{ex:motiv-HLF}
\end{ex}

\bigskip

\section{Contingency tables} \label{sec:CT-count}

\subsection{Definition}\label{ss:CT-count-def}
Let $\mathbf a=(a_1,\dots,a_m)$
and $\mathbf b=(b_1,\dots,b_n)$,
be two integer sequences
with equal sum:
$$
\sum_{i=1}^m \ts a_i \, = \, \sum_{j=1}^n \ts b_j \. = \. N.
$$
A \emph{contingency table} with \emph{margins} $(\mathbf a, \mathbf b)$
is an $m \times n$ matrix of non-negative integers whose $i$-th row sums to $a_i$
and whose $j$-th column sums to~$b_j$. We denote by $\CT(\mathbf a, \mathbf b)$ the set of all
such matrices, and let \ts $\rT(\mathbf a, \mathbf b):=|\CT(\mathbf a, \mathbf b)|$.
Finally, denote by \.
$\RCT(\mathbf a, \mathbf b) \in \rr^{m \ts n}$ \. be the polytope of \emph{real}
contingency tables, i.e.\ table with row and column sums as above,
and non-negative real entries.

Counting \ts $\rT(\mathbf a, \mathbf b)$ \ts is a difficult problem, both mathematically
and computationally.  In fact, even a change in a single row and column sum can
lead to a major change in the count, see~\cite{B2,DLP}.  Three-dimensional tables
are even harder to count, see~\cite{B3,DO}.  We refer to~\cite{B2,DG,FLL}
for an introduction to the subject and further references in many areas.

Note that \ts $\rT(\mathbf a, \mathbf b)$ \ts is invariant under permutation of
the order of margins.  For simplicity of notation, throughout the paper we
use partitions to denote the margins.

\medskip

\subsection{Bounds for 2-dimensional tables}
Let $\la,\mu \vdash n$, $\ell=\ell(\la)$, $m=\ell(\mu)$.  Denote by
\. $\CT(\la,\mu)$ \. the set of $\ell\times m$ \emph{contingency tables}
with row sums $\la$ and column sums~$\mu$.  Let \.
$\rT(\la,\mu) = \bigl|\CT(\la,\mu)\bigr|$.

\medskip

\begin{thm} \label{t:barv-2dim}
Let \ts $\ell=\ell(\la)$, $m=\ell(\mu)$.
Let \. $Z=(z_{ij}) \in \RCT(\la,\mu)$ \. be the
unique point maximizing a strictly concave function
$$g(Z) \. := \, \sum_{i=1}^\ell\sum_{j=1}^m \. (z_{ij}+1) \log (z_{ij}+1)
\. - \. z_{ij}\log z_{ij}
$$
Then:
$$\rT(\la,\mu) \. \le \. \exp g(Z)\ts.
$$
\end{thm}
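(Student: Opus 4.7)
The plan is to follow Barvinok's maximum entropy strategy: construct a product probability distribution on $\zz_{\ge 0}^{\ell\times m}$ that (i) puts equal mass on every table in $\CT(\la,\mu)$ and (ii) whose total mass on $\CT(\la,\mu)$ is at most one. The ratio then gives the desired upper bound on $\rT(\la,\mu)$.

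First, for each pair $(i,j)$ I would set $q_{ij} := z_{ij}/(z_{ij}+1)$ and let $X_{ij}$ be an independent geometric random variable on $\{0,1,2,\ldots\}$ with parameter $1-q_{ij}$, so that $\pp(X_{ij}=k) = (1-q_{ij})\ts q_{ij}^k$ and $\ee[X_{ij}] = z_{ij}$. For any matrix $T=(t_{ij})$ of non-negative integers,
$$
\log \pp(X=T) \. = \. \sum_{i,j}\bigl[t_{ij}\log q_{ij} \. + \. \log(1-q_{ij})\bigr]\ts.
$$
Since $1-q_{ij} = 1/(z_{ij}+1)$, the second sum is $-\sum_{i,j}\log(z_{ij}+1)$, a constant independent of~$T$.

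Next I would use the fact that $Z$ is the maximizer of the strictly concave function $g$ on the affine polytope $\RCT(\la,\mu)$. By Lagrange multipliers, since $\partial g/\partial z_{ij} = \log\bigl((z_{ij}+1)/z_{ij}\bigr) = -\log q_{ij}$, the stationarity condition at~$Z$ reads $-\log q_{ij} = u_i + v_j$ for some reals $u_i,v_j$. Hence $\log q_{ij} = -u_i-v_j$, and for any $T\in\CT(\la,\mu)$,
$$
\sum_{i,j} t_{ij}\log q_{ij} \. = \. -\sum_i u_i\Bigl(\sum_j t_{ij}\Bigr) - \sum_j v_j\Bigl(\sum_i t_{ij}\Bigr) \. = \. -\sum_i u_i\la_i - \sum_j v_j\mu_j\ts.
$$
The right-hand side depends only on the margins, so $\pp(X=T)$ takes a common value~$P_0$ on all of $\CT(\la,\mu)$. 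Evaluating at $T=Z$ (thought of formally, or equivalently by substituting the margin identity back into the expression with $t_{ij}$ replaced by $z_{ij}$) gives
$$
\log P_0 \. = \. \sum_{i,j}\bigl[z_{ij}\log q_{ij} - \log(z_{ij}+1)\bigr] \. = \. -\sum_{i,j}\bigl[(z_{ij}+1)\log(z_{ij}+1) - z_{ij}\log z_{ij}\bigr] \. = \. -g(Z)\ts.
$$

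Finally, because the events $\{X=T\}$ for $T\in\CT(\la,\mu)$ are disjoint and their union has probability at most one,
$$
\rT(\la,\mu)\cdot P_0 \. = \. \sum_{T\in\CT(\la,\mu)} \pp(X=T) \. \le \. 1\ts,
$$
which rearranges to $\rT(\la,\mu) \le \exp g(Z)$.

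The one genuinely delicate point is the existence and uniqueness of the maximizer~$Z$ and the validity of the Lagrange multiplier step: I would note that $g$ is strictly concave and continuous on the compact polytope $\RCT(\la,\mu)$, giving a unique maximizer, and that standard KKT conditions (with the linear margin constraints) produce the required multipliers $u_i,v_j$. Everything else is a direct computation with the geometric distribution; the only trick is recognizing that the Lagrange decomposition of $\log q_{ij}$ forces $\pp(X=T)$ to be constant on $\CT(\la,\mu)$, which is precisely what makes the inequality tight up to the sum-to-one slack.
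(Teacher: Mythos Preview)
The paper does not include a proof of this theorem; it is quoted as a known result of Barvinok (cf.\ the references~\cite{B1,B2,B3} in the paper), so there is no in-paper argument to compare against. Your proof is exactly Barvinok's maximum entropy argument and is correct.

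One small point you might make fully explicit: to invoke Lagrange multipliers with only the linear margin constraints active (ignoring the non-negativity constraints $z_{ij}\ge 0$), you need the maximizer~$Z$ to lie in the relative interior of $\RCT(\la,\mu)$. This holds because $\partial g/\partial z_{ij} = \log\bigl((z_{ij}+1)/z_{ij}\bigr)\to +\infty$ as $z_{ij}\to 0^+$; hence any point with some $z_{ij}=0$ can be improved by a margin-preserving $2\times 2$ perturbation, so the unique maximizer has all $z_{ij}>0$ and the KKT conditions reduce to the equality form you use.
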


\smallskip

\begin{rem}{\rm
In~\cite{Sha} (see also~\cite[$\S$3]{B3}), an exponential
improvement in the upper bound was obtained.  Unfortunately,
it does not seem to improve our estimates except for the lower
order terms.  }
\end{rem}

\smallskip

\begin{ex}{\rm
Let $n=\ell^3$, $k=\ell^2$, $\la=\mu=(k^\ell)$,
and $\ell\to \infty$.  Note that contingency tables in $\CT(\la,\mu,\nu)$ all
have equal margins.  Thus, the convex polytope $\RCT(\la,\mu,\nu)$ is symmetric
with respect to $S_\ell\times S_\ell$ action, and since the  $Z\in\RCT(\la,\mu,\nu)$
maximizing $g(Z)$ is unique it must be uniform. In other words, $z_{ij} = \ell$
for all $1\le i,j\le \ell$.  This gives
$$
g(Z) \, = \, \ell^2\bigl[(\ell+1)\log (\ell+1) \. - \. \ell\log \ell\bigr]
\, = \, \ell^2\log \ell \. +\. \ell^2 \. + \. \frac12 \ts \ell \. + \. O(1)\ts.
$$
and
$$
\rT(\la,\mu) \, \le \, \exp\ts g(Z) \, = \, \exp \bigl[\ell^2\log \ell \. + \. O(\ell^2)\bigr]\..
$$
}\label{ex:motiv-2dim}
\end{ex}

\smallskip

We should mention that in the uniform case and $\ell=n^\ve$, $\ve<1/3$,
very precise asymptotics are known.  We will not use the matching
lower bounds and only include one such result in a somewhat simplified
form.

\smallskip

\begin{thm}[{Cor.~1 in \cite{CM}}]
Let $\la=(k^\ell)$, $\mu=(s^m)$, so $\ell \ts k = m \ts s = n \to \infty$.  Let
$\al=s/\ell=k/m$, s.t.\ $\ell\ts m = o(\al^2)$.   Then
$$
\rT(\la,\mu) \, = \, (\al+1/2)^{(\ell-1)(m-1)} \, \frac{(\ell \ts m)!}{(\ell!)^m \. (m!)^\ell}
\,\cdot\. O(1)\ts.
$$
\end{thm}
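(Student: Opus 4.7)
This is a classical dense-regime contingency-tables asymptotic, and I would follow the Canfield--McKay saddle-point template. The starting point is the generating-function identity
$$\rT(\la,\mu) \;=\; [x_1^{k}\cdots x_\ell^{k}\,y_1^{s}\cdots y_m^{s}]\,\prod_{i,j}\frac{1}{1-x_iy_j},$$
which I would rewrite as an $(\ell+m)$-dimensional Cauchy contour integral with the $x_i$ running around circles $|x_i|=r_x$ and the $y_j$ around $|y_j|=r_y$, with $r_xr_y<1$.

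\medskip

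\emph{Saddle point and Gaussian reduction.} The $S_\ell\times S_m$-symmetry of the uniform margins suggests taking the saddle in the symmetric locus $x_i\equiv x^*$, $y_j\equiv y^*$, with $x^*y^*=z^*:=\al/(\al+1)$ (from the dominant-balance saddle equation $\al = z/(1-z)$, which one derives in the regime $k\sim\al m$, $s\sim\al\ell$). Parametrising $x_i=x^*e^{i\theta_i}$, $y_j=y^*e^{i\phi_j}$ and Taylor-expanding $\log\prod_{i,j}(1-x_iy_j)^{-1}$ to second order in $(\theta,\phi)$ reduces the contour integral to a Gaussian in these angular variables. The associated quadratic form has rank $\ell+m-1$, the single zero eigenvalue reflecting the gauge invariance $(x,y)\mapsto(\beta x,y/\beta)$ of the integrand on $\ell k=ms$.

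\medskip

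\emph{Extracting the asymptotic.} The Gaussian integration contributes a determinant that organises into the factor $(\al+1/2)^{(\ell-1)(m-1)}$; the $+1/2$ shift is the well-known Canfield--McKay Gaussian-enumeration correction (essentially a continuous-to-discrete correction arising in the exact saddle expansion). The residual exponential prefactor $(x^*)^{-\ell k}(y^*)^{-ms}(1-z^*)^{-\ell m}$, simplified using $\ell k=ms=n$ and then compared against Stirling, furnishes the combinatorial factor $(\ell m)!/[(\ell!)^m\,(m!)^\ell]$, modulo an $O(1)$ multiplicative constant from the overall Gaussian normalisation.

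\medskip

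\emph{Main obstacle.} The technical heart is the estimate showing that the integrand is negligible outside a small neighbourhood of the saddle, so that the Gaussian approximation captures the leading asymptotic. The hypothesis $\ell m = o(\al^2)$ bounds the effective angular width of the saddle relative to $\al$, and is precisely what makes the central Gaussian region dominant. Controlling the tails requires sharp bounds on $|(1-z^*e^{i(\theta+\phi)})^{-1}|$ away from the origin together with an analysis of the cubic and higher cumulants of the quadratic form; this is where the Canfield--McKay argument does its heavy lifting. Granted that tail estimate, the extraction of the claimed asymptotic from the Gaussian computation is routine.
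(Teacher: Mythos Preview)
The paper does not give its own proof of this statement: it is quoted verbatim as Corollary~1 of Canfield--McKay~\cite{CM} and used only as background (the authors even say ``We will not use the matching lower bounds and only include one such result in a somewhat simplified form''). So there is no in-paper argument to compare against.

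That said, your outline is an accurate high-level summary of the Canfield--McKay method itself: the multidimensional Cauchy integral for $\prod_{i,j}(1-x_iy_j)^{-1}$, the symmetric saddle at $x^*y^*=\alpha/(\alpha+1)$, the rank-$(\ell+m-1)$ Gaussian with the gauge zero mode, and the identification of $\ell m=o(\alpha^2)$ as the hypothesis controlling the cubic and higher cumulants so that the tails are negligible. The one place where your sketch is a bit glib is the sentence ``the Gaussian integration contributes a determinant that organises into the factor $(\alpha+1/2)^{(\ell-1)(m-1)}$'': in the actual Canfield--McKay paper this is not a single determinant evaluation but a fairly delicate computation involving the interaction of the Hessian with the higher-order corrections, and the $+1/2$ shift emerges only after a careful asymptotic expansion rather than directly from the Gaussian. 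If you were to write this up in full you would need to reproduce that part of~\cite{CM} rather than assert it.
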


\nin
See also~\cite{BH1,GM} for  more general bounds in the
near-uniform case.

\medskip

\subsection{Bounds for 3-dimensional tables}
Let $\la,\mu,\nu\vdash n$.  Denote by $\rT(\la,\mu,\nu)$
the number of $3$-dimensional \ts $\ell(\la)\times \ell(\mu)\times \ell(\nu)$ \ts
contingency tables with $2$-dimensional sums orthogonal to $x, y$ and~$z$ coordinates
given by $\la$, $\mu$ and $\nu$, respectively.  Denote by $\RCT(\la,\mu,\nu)$
the corresponding polytope of real $3$-dimensional contingency tables.

\begin{thm} [{Barvinok~\cite[$\S3$]{B3} and Benson-Putnins~\cite{Ben}}]
Let \ts $\ell=\ell(\la)$, $m=\ell(\mu)$, $r=\ell(\nu)$.
Let \. $Z=(z_{ijk}) \in \RCT(\la,\mu,\nu)$ \. be the
unique point maximizing a strictly concave function
$$g(Z) \. := \, \sum_{i=1}^\ell\sum_{j=1}^m \sum_{k=1}^r \. (z_{ijk}+1) \log (z_{ijk}+1)
\. - \. z_{ijk}\log z_{ijk}\..
$$
Then:
$$\rT(\la,\mu,\nu) \. \le \. \exp g(Z)\ts.
$$
\label{t:barv-3dim}
\end{thm}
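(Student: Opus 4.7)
The plan is to follow Barvinok's maximum-entropy method, anchored on the observation that the summand $f(x) := (x+1)\log(x+1)-x\log x$ of $g$ is precisely the Shannon entropy of a geometric random variable with mean~$x$: if $X$ satisfies $P(X=k) = p(1-p)^k$ for $k\ge 0$ with $(1-p)/p = x$, then a direct calculation gives $H(X) = -\log p - x\log(1-p) = f(x)$. With this in mind, I would introduce mutually independent geometric variables $X_{ijk}$ with means $z_{ijk}$, set $X=(X_{ijk})$, and record that for any non-negative integer array $T=(t_{ijk})$,
$$
P(X=T) \, = \, \prod_{i,j,k} \frac{z_{ijk}^{\,t_{ijk}}}{(z_{ijk}+1)^{\,t_{ijk}+1}}
 \, = \, \frac{1}{\prod_{i,j,k}(z_{ijk}+1)} \cdot \prod_{i,j,k}\!\Big(\frac{z_{ijk}}{z_{ijk}+1}\Big)^{\!t_{ijk}}.
$$

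Next I would verify that $Z$ lies in the open positive orthant and apply KKT conditions. The ``independent'' table $Z^\ast_{ijk} := \la_i\mu_j\nu_k/n^2$ sits in $\RCT(\la,\mu,\nu)$ with all coordinates strictly positive, and since $\partial g/\partial z_{ijk} = \log((z_{ijk}+1)/z_{ijk})$ blows up as $z_{ijk}\to 0^+$, any boundary point of $\RCT(\la,\mu,\nu)$ with a vanishing coordinate is strictly beaten by moving along the segment toward $Z^\ast$. With all $z_{ijk}>0$, the Lagrange multipliers $\alpha_i, \beta_j, \gamma_k$ for the three families of margin constraints yield the stationarity equations $\log((z_{ijk}+1)/z_{ijk}) = \alpha_i+\beta_j+\gamma_k$. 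Multiplying by $t_{ijk}$ and summing shows that the linear functional
$$
L(t) \, := \, \sum_{i,j,k} t_{ijk}\log\!\tfrac{z_{ijk}+1}{z_{ijk}} \, = \, \sum_i\alpha_i\la_i \. + \. \sum_j\beta_j\mu_j \. + \. \sum_k\gamma_k\nu_k
$$
takes the same value $C$ on every $t \in \RCT(\la,\mu,\nu)$, integer or not. Consequently $P(X=T) = q$ is the \emph{same} value for all $T\in\CT(\la,\mu,\nu)$.

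From $\rT(\la,\mu,\nu)\cdot q = P(X \in \CT(\la,\mu,\nu)) \le 1$ one gets $\rT(\la,\mu,\nu)\le 1/q$. To identify $-\log q$ with $g(Z)$, apply $L$ at $t = Z$ (legitimate by the linearity argument above): $L(Z) = \sum z_{ijk}\log(z_{ijk}+1) - \sum z_{ijk}\log z_{ijk}$, whence
$$
-\log q \, = \, \sum_{i,j,k}\log(z_{ijk}+1) \. + \. L(Z) \, = \, \sum_{i,j,k}\bigl[(z_{ijk}+1)\log(z_{ijk}+1)-z_{ijk}\log z_{ijk}\bigr] \, = \, g(Z),
$$
as required. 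Uniqueness of the maximizer $Z$ follows from strict concavity of $g$, visible from $f''(x)= -1/[x(x+1)]<0$.

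The main obstacle is conceptual rather than technical: one must recognise $f$ as the geometric entropy, for this is exactly what makes the conditional law of $X$ given the margins uniform on $\CT(\la,\mu,\nu)$ and converts the trivial inequality $P(X\in\CT(\la,\mu,\nu))\le 1$ into the bound $\rT(\la,\mu,\nu)\le\exp g(Z)$. Everything else is bookkeeping: the interior-maximizer argument via $Z^\ast$, the KKT cancellation that turns $L(t)$ into a function of the margins only, and the neat evaluation of $L$ at $T=Z$. The same scheme proves the $2$-dimensional Theorem~\ref{t:barv-2dim}, with only the number of families of margin constraints and Lagrange multipliers changing.
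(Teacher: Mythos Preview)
Your proof is correct and is precisely Barvinok's maximum--entropy argument. Note, however, that the paper does not give its own proof of this theorem: it is quoted as a result from~\cite{B3} and~\cite{Ben}, so there is nothing in the paper to compare against---you have simply (and accurately) reconstructed the proof from the cited references.
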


This result will prove quite sharp and allows us to bound Kronecker
coefficients in the rectangular case.

\medskip

\subsection{Bounds for binary tables}
Denote by  $\BCT(\la,\mu,\nu)$ the set of
$3$-dimensional \emph{binary} (0/1)
contingency tables, and let
$\rB(\la,\mu,\nu)=\bigl|\BCT(\la,\mu,\nu)\bigr|$.
Denote by
$$\QCT(\la,\mu,\nu) \. := \. \RCT(\la,\mu,\nu) \. \cap_{ijk} \.
\bigl\{0\le z_{ijk} \le 1\bigr\}
$$
the intersection of the polytope of contingency tables
with the unit cube.

\begin{thm} [{Barvinok, see e.g.\ \cite[$\S3$]{B2}}]
Let \ts $\ell=\ell(\la)$, $m=\ell(\mu)$, $r=\ell(\nu)$.
Let \. $Z=(z_{ijk}) \in \QCT(\la,\mu,\nu)$ \. be the
unique point maximizing a strictly concave function
$$h(Z) \. := \, \sum_{i=1}^\ell\sum_{j=1}^m \sum_{k=1}^r \. z_{ijk}\ts \log \frac{1}{z_{ijk}}
\, + \, (1-z_{ijk})\ts \log \frac{1}{1-z_{ijk}}\,.
$$
Then:
$$\rB(\la,\mu,\nu) \. \le \. \exp h(Z)\ts.
$$
\label{t:barv-3dim-binary}
\end{thm}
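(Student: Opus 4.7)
The plan is a classical \emph{maximum entropy} argument, parallel to the proof of Theorem~\ref{t:barv-3dim} but with Bernoulli instead of geometric marginals. First, strict concavity of $h$ together with compactness and convexity of $\QCT(\la,\mu,\nu)$ gives a unique maximizer $Z$. Since $\partial h/\partial z_{ijk} \to +\infty$ as $z_{ijk}\to 0^+$ and $\to -\infty$ as $z_{ijk}\to 1^-$, each coordinate is either strictly interior, $0<z_{ijk}<1$, or pinned to $\{0,1\}$ by the margin constraints alone; in the latter case every $T\in\BCT(\la,\mu,\nu)$ is also forced to the same value at $(i,j,k)$, so after discarding these coordinates we may assume $0<z_{ijk}<1$. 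The KKT conditions for the margin constraints then yield Lagrange multipliers $\al_i,\be_j,\ga_k$ satisfying
\[
\log \frac{1-z_{ijk}}{z_{ijk}} \; = \; \al_i + \be_j + \ga_k \qquad\text{for all } i,j,k.
\]

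Next I would introduce the product Bernoulli measure $\mathbb{P}$ on $\{0,1\}^{\ell\times m\times r}$ under which entries are independent with $\mathbb{P}(T_{ijk}=1)=z_{ijk}$. For any $T=(t_{ijk})\in\BCT(\la,\mu,\nu)$,
\[
\log \mathbb{P}(T) \; = \; \sum_{ijk} \log(1-z_{ijk}) \, + \, \sum_{ijk} t_{ijk}\ts\log \frac{z_{ijk}}{1-z_{ijk}}\..
\]
Substituting the KKT identity and using $\la_i=\sum_{jk} t_{ijk}$, $\mu_j=\sum_{ik} t_{ijk}$, $\nu_k=\sum_{ij}t_{ijk}$, the $T$-dependent sum collapses into $-\sum_i \al_i\la_i - \sum_j \be_j\mu_j - \sum_k \ga_k\nu_k$, which is a constant in $T$. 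Hence every $T\in\BCT(\la,\mu,\nu)$ receives the same probability $p$ under $\mathbb{P}$.

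The bound then follows from disjointness of the events $\{T\}$: we have $p\cdot \rB(\la,\mu,\nu) = \mathbb{P}\bigl(\BCT(\la,\mu,\nu)\bigr) \le 1$, so $\rB(\la,\mu,\nu) \le 1/p$. A final short computation identifies $\log(1/p)$ with $h(Z)$: expand
\[
-\log p \; = \; -\sum_{ijk}\log(1-z_{ijk}) \, + \, \sum_i \al_i\la_i \.+\. \sum_j \be_j\mu_j \.+\. \sum_k \ga_k\nu_k,
\]
replace the Lagrangian sum by $\sum_{ijk} z_{ijk}\log\frac{1-z_{ijk}}{z_{ijk}}$ (using $\la_i=\sum_{jk} z_{ijk}$, etc.), and regroup to get exactly $\sum_{ijk}\bigl[z_{ijk}\log(1/z_{ijk})+(1-z_{ijk})\log(1/(1-z_{ijk}))\bigr]=h(Z)$. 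The only real technical obstacle is the boundary issue flagged at the start; once that is handled the proof is a clean maximum-entropy computation whose heart is the KKT cancellation making $\mathbb{P}$ uniform on $\BCT(\la,\mu,\nu)$.
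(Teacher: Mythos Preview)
The paper does not give its own proof of this theorem; it is simply quoted from Barvinok with a reference to~\cite[$\S$3]{B2}. Your argument is exactly the standard maximum-entropy proof that underlies Barvinok's bounds: introduce a product Bernoulli law with parameters $z_{ijk}$, use the KKT/exponential-family structure so that every admissible binary table receives the same mass~$p$, and conclude $\rB(\la,\mu,\nu)\le 1/p=\exp h(Z)$. So your proposal is correct and matches the intended (cited) proof.

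One remark on the boundary step, which you correctly flag as the only real issue. Your claim that a coordinate with $z_{ijk}\in\{0,1\}$ at the maximizer is pinned there for \emph{every} feasible point (hence for every $T\in\BCT(\la,\mu,\nu)$) is true, and here is a clean justification. Suppose $z_{ijk}=0$ but some $W\in\QCT(\la,\mu,\nu)$ has $w_{ijk}>0$. Along the segment $Z(t)=(1-t)Z+tW$, each coordinate that leaves $\{0,1\}$ contributes a term of order $-ct\log t>0$ to $h\bigl(Z(t)\bigr)-h(Z)$ (since $H(\ve)=H(1-\ve)\sim -\ve\log\ve$), while coordinates with $z_{abc}\in(0,1)$ contribute only $O(t)$. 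Hence $h\bigl(Z(t)\bigr)>h(Z)$ for small $t>0$, contradicting maximality. The case $z_{ijk}=1$ is symmetric. After discarding these forced coordinates, all remaining $z_{ijk}$ lie in $(0,1)$, the KKT equalities hold, and your computation identifying $-\log p$ with $h(Z)$ goes through verbatim.
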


\smallskip

\begin{ex}{\rm
Let $n=k^3$, $\ell=k^2$, $\la=\mu=(k^\ell)$,
and $k\to \infty$.   Consider $\rB(\la,\mu)$.  By the symmetry,
$z_{ij} = 1/\ell$ for all $1\le i,j\le \ell$.  This gives
$$
h(Z) \. = \. \ell^2\cdot \left[\frac{1}{\ell}\log\ell \. + \.
\bigl(1-1/\ell\bigr)\log \frac{1}{1-1/\ell}\right] \. = \. \ell\log \ell +\ell+ O(1)
$$
and
$$
\rB(\la,\mu) \, \le \, \exp \bigl[\ell\log \ell \. + \. O(\ell)\bigr]\.,
$$
which is also tight~\cite{B2}.
}\label{ex:motiv-2dim-bin}
\end{ex}

\medskip

\subsection{Majorization}
Let $\la,\mu\vdash n$. The \emph{dominance order} is defined as follows: \.
$\la \unlhd \mu$ if $\la_1 \le \mu_1$, $\la_1+\la_2 \le \mu_1+\mu_2$, etc.
For $\la \vdash n$ and a set of partitions $\cL$,
we write $\la\unlhd \cL$ if $\la \dom \mu$ for all $\mu\in \cL$.
This is a special case of \emph{majorization}, equivalent for partitions
and studied extensively in many fields of mathematics and applications,
see e.g.~\cite{MOA}.  The followiiing result is standard in the area
(see e.g.~\cite{Mac,EC2}):

\smallskip

\begin{thm} \label{t:Kostka-dom}
Let $\la,\mu\vdash n$. Then \ts $K(\nu,\la)\ge K(\nu,\mu)$ \ts
for all \ts $\la \unlhd \mu$.  Moreover, we have
$K(\la,\la)=1$, and $K(\la,\mu)=0$ unless $\mu \dom \la$.
\end{thm}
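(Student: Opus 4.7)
The plan is to handle the three assertions in order of increasing depth: first the vanishing, then the uniqueness at $\mu=\la$, and finally the dominance monotonicity, which is the only nontrivial claim.

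For $K(\la,\mu)=0$ unless $\mu \dom \la$, I would use column-strictness directly: in any $T\in\SSYT(\la,\mu)$, the entry in row $i$ is at least $i$, so every entry of value $\le k$ lies in the top $k$ rows. Summing occurrences, $\mu_1+\cdots+\mu_k \le \la_1+\cdots+\la_k$ for all $k$, i.e.\ $\mu \unlhd \la$. When $\mu=\la$, equalities throughout force each row $i$ to contain exactly $\la_i$ copies of $i$; this uniquely determines the superstandard tableau and gives $K(\la,\la)=1$.

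For the monotonicity, I would work in the ring of symmetric functions. Since $h_\mu = \sum_\nu K(\nu,\mu)\ts s_\nu$, it suffices to show that \ts $h_\la - h_\mu$ \ts is Schur positive whenever $\la \unlhd \mu$. Telescoping along a chain reduces this to the case that $\mu$ is obtained from $\la$ by a single box move: $\mu_i = \la_i+1$ and $\mu_j = \la_j-1$ for some $i<j$, other parts fixed. Such a chain is constructed by picking the smallest $i$ with $\la_i<\mu_i$ and the largest $j$ with $\la_j>\mu_j$; one verifies that moving a box from row $j$ to row $i$ produces a partition lying strictly above $\la$ and weakly below $\mu$ in dominance, and the procedure terminates since $\sum_k|\mu_k-\la_k|$ strictly decreases at each step. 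Setting $a=\la_i\ge \la_j=b$, the two-row Jacobi--Trudi identity yields
\[
h_a h_b \, - \, h_{a+1} h_{b-1} \, = \, s_{(a,b)}\ts,
\]
so factoring out the commuting unchanged $h_{\la_k}$'s gives
\[
h_\la \. - \. h_\mu \, = \, s_{(a,b)} \cdot \prod_{k\ne i,\ts j} h_{\la_k}\ts,
\]
which is Schur positive by iterated Pieri expansion. Extracting the coefficient of $s_\nu$ yields $K(\nu,\la)\ge K(\nu,\mu)$.

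The only step that deserves attention is the Jacobi--Trudi identity $h_a h_b - h_{a+1} h_{b-1} = s_{(a,b)}$, which is immediate from the $2\times 2$ Jacobi--Trudi determinantal formula for two-row Schur functions, so no genuine obstacle arises. A parallel Bender--Knuth-style injective argument on SSYTs (replacing an $i+1$ by an $i$ via involution chains) is possible but requires more case analysis; the symmetric-function route above is the slickest.
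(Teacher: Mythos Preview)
Your argument is correct. The vanishing and uniqueness parts are standard, and your monotonicity proof via the Schur positivity of \ts $h_\la - h_\mu$ \ts is clean; the chain reduction works (if $\sum_{k\le m}\la_k = \sum_{k\le m}\mu_k$ for some $i\le m<j$, then dominance on the tail forces $\la_k=\mu_k$ for all $k>m$, contradicting $m<j$), and the $2\times 2$ Jacobi--Trudi identity \ts $h_ah_b-h_{a+1}h_{b-1}=s_{(a,b)}$ \ts together with Pieri finishes it.

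The paper does not actually supply a proof of this theorem: it is quoted as a standard result, with a pointer to \cite[$\S$1.7]{Mac} for an algebraic proof, to \cite{Whi} for a direct bijective proof, and to \cite{Pak} for context. Your approach is precisely the algebraic one alluded to in the Macdonald reference. The alternative bijective route (Bender--Knuth type, as in White) constructs an explicit injection \ts $\SSYT(\nu,\mu)\hookrightarrow\SSYT(\nu,\la)$ \ts and has the advantage of yielding an effective map, whereas your symmetric-function argument is shorter and immediately places the inequality in the broader context of Schur positivity.
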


\smallskip

We refer to~\cite[$\S$1.7]{Mac} for an algebraic proof,
to~\cite{Whi} for a direct bijective proof, and
to~\cite{Pak} for the context and generalizations.

\begin{thm}[Barvinok]\label{t:CT-dom-2dim}
Let \ts $\la,\mu,\al,\be \vdash n$, and suppose \ts $\ell(\la)=\ell(\al)$,
\ts $\ell(\mu) = \ell(\be)$, \ts $\la \rdom \al$, \ts $\mu \rdom \be$. Then:
$$
\rT(\la,\mu) \, \le \, \rT(\al,\be)\ts.
$$
\end{thm}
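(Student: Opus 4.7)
The plan is to reduce, via transposition symmetry and the Muirhead--Rado refinement, to a single elementary dominance move in one coordinate, and then to prove the resulting one-step inequality by unimodality of a product of symmetric unimodal polynomials. Since transposing a matrix yields a bijection $\CT(\la,\mu) \leftrightarrow \CT(\mu,\la)$, we have $\rT(\la,\mu) = \rT(\mu,\la)$, so it suffices to prove the one-sided statement that $\la \rdom \al$ with $\ell(\la)=\ell(\al)$ implies $\rT(\la,\mu) \le \rT(\al,\mu)$; applying this once in each coordinate then yields the theorem. By the classical Muirhead--Rado lemma for the dominance order on partitions, any such $\la \rdom \al$ admits a refinement $\la = \la^{(0)} \rdom \la^{(1)} \rdom \cdots \rdom \la^{(N)} = \al$ through partitions in which each step is an \emph{elementary move} $\la^{(t+1)} = \la^{(t)} - e_i + e_j$ with $i < j$. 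It therefore suffices to treat the case $\al = \la - e_i + e_j$ with $i<j$ and both partitions; in particular $\al_i \ge \al_j$ forces $\la_i \ge \la_j + 2$.

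For this one-step inequality, stratify $\CT(\la,\mu)$ by the submatrix $T^\circ$ obtained from $T$ by deleting rows $i$ and~$j$. Given such a $T^\circ$, the entries of $T$ in rows $i$ and $j$ form a two-row contingency table with prescribed column sums $c_k := \mu_k - \sum_{r\ne i,j} T_{rk}$ and row sums $(\la_i,\la_j)$. The number of such two-row fillings is
$$
N(s;c_\bullet) \. := \. \#\bigl\{(r_1,\dots,r_m) \.:\. 0 \le r_k \le c_k, \ \sum_k r_k = s\bigr\}
$$
evaluated at $s=\la_i$. The admissible $T^\circ$ and the $c_k$ are identical whether the row-sum vector is $\la$ or $\al$ (only rows $i,j$ change their sums), whence
$$
\rT(\la,\mu) \. = \. \sum_{T^\circ} N(\la_i;c_\bullet) \quad\text{and}\quad \rT(\al,\mu) \. = \. \sum_{T^\circ} N(\la_i - 1;c_\bullet),
$$
so it remains to verify $N(\la_i;c_\bullet) \le N(\la_i - 1;c_\bullet)$.

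This last inequality is pure unimodality. The generating polynomial
$$
\sum_s N(s;c_\bullet)\ts x^s \. = \. \prod_{k=1}^m \bigl(1 + x + \cdots + x^{c_k}\bigr)
$$
is a product of symmetric unimodal polynomials with nonnegative coefficients, hence symmetric unimodal itself, peaked at $s = \tfrac12 \sum_k c_k = \tfrac{\la_i + \la_j}{2}$. Since $\la_i - 1 \ge \la_j + 1 \ge \tfrac{\la_i+\la_j}{2}$, the point $\la_i - 1$ is weakly closer to the peak than $\la_i$, giving the desired inequality. The only mildly subtle point in the plan is the Muirhead--Rado refinement through \emph{partitions} (rather than merely weakly decreasing sequences); this is handled by a short induction on $\sum_k|\la_k - \al_k|$, choosing at each step the smallest $i$ with $\la_i > \al_i$ and the smallest $j > i$ with $\la_j < \al_j$, and verifying that the elementary move preserves the partition property.
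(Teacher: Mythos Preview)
Your argument is correct in its essentials and takes a genuinely different route from the paper. The paper's proof is a one-liner: apply the RSK identity \ts $\rT(\la,\mu) = \sum_{\nu} K(\nu,\la)\ts K(\nu,\mu)$ \ts and then invoke the monotonicity of Kostka numbers in the weight argument (Theorem~\ref{t:Kostka-dom}) term by term. Your approach instead unbundles this into a direct combinatorial argument: reduce via transposition and a Robin Hood chain to a single move $\la \to \la - e_i + e_j$, stratify by the complementary rows, and finish with the unimodality of $\prod_k (1+x+\cdots+x^{c_k})$. This buys you a self-contained proof that avoids both RSK and the Kostka monotonicity theorem, at the cost of some length; in effect your unimodality step is a direct proof of the two-row case of Kostka monotonicity.

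Two small slips to clean up. First, your stated recipe for the partition chain (smallest $i$ with $\la_i > \al_i$, then smallest $j>i$ with $\la_j < \al_j$) does not always stay inside partitions: for $\la = (3,3,0)$, $\al = (2,2,2)$ it yields $(2,3,1)$. The existence of such a chain through partitions is standard, but the correct move takes $i$ at the \emph{end} of its block of equal parts (here $i=2$), not the beginning; you should either adjust the recipe or simply cite the covering structure of the dominance lattice. Second, the displayed chain ``$\la_i - 1 \ge \la_j + 1 \ge \tfrac{\la_i+\la_j}{2}$'' fails at the second inequality whenever $\la_i > \la_j + 2$; what you actually need, and what holds, is just $\la_i - 1 \ge \tfrac{\la_i+\la_j}{2}$, equivalent to $\la_i \ge \la_j + 2$. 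Neither slip affects the validity of the overall argument.
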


The proof in~\cite[Eq.~(2.4)]{B0} is a one line application of
Theorem~\ref{t:Kostka-dom} to the \emph{RSK identity} (see e.g.~\cite{EC2}):
$$
\rT(\la,\mu) \, = \, \sum_{\nu \vdash n} \. K(\la,\nu) \cdot K(\mu,\nu)
\, \le \, \sum_{\nu \vdash n} \. K(\al,\nu) \cdot K(\be,\nu) \, = \, \rT(\al,\be)\ts.
$$
Alternatively, it can be deduced from \cite[Thm.~4.9]{Val-dia}.
We refer~\cite{Pak} (Note~36,~37 in the expanded version on the paper),
for a explicit combinatorial proof.  The following in a helpful
extension of Theorem~\ref{t:CT-dom-2dim}.

\begin{thm}
Let \ts  $\la,\mu,\nu,\al,\be,\ga \vdash n$, and suppose \ts $\ell(\la)=\ell(\al)$,
\ts $\ell(\mu) = \ell(\be)$, \ts $\ell(\nu) = \ell(\ga)$, \ts $\la \rdom \al$, \ts $\mu \rdom \be$, \ts
\ts $\nu \rdom \ga$. Then:
$$
\rT(\la,\mu,\nu) \, \le \, \rT(\al,\be,\ga)\ts.
$$
\label{t:CT-dom-3dim}
\end{thm}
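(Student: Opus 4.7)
The plan is to imitate the two-dimensional proof of Theorem~\ref{t:CT-dom-2dim} by expanding $\rT(\la,\mu,\nu)$ as a nonnegative combination of Kostka numbers times a positive ``coupling" coefficient, and then applying Theorem~\ref{t:Kostka-dom} once in each of the three coordinate directions.

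First I would establish the character-theoretic expansion
$$
\rT(\la,\mu,\nu) \, = \, \sum_{\tau,\pi,\rho \ts \vdash \ts n} \. K(\tau,\la)\ts K(\pi,\mu)\ts K(\rho,\nu)\ts g(\tau,\pi,\rho),
$$
which plays exactly the role that the RSK identity plays in the $2$-dimensional case. The reason is that a $3$-dimensional contingency table with margins $(\la,\mu,\nu)$ is the same as an $S_n$-orbit on triples $(w_1,w_2,w_3)$ of words of contents $\la$, $\mu$, $\nu$ under the diagonal action, i.e.\ an invariant in $\MM^\la\otimes \MM^\mu\otimes \MM^\nu$. Burnside's lemma therefore gives
$$
\rT(\la,\mu,\nu) \, = \, \frac{1}{n!}\sum_{\si\in S_n} \. \phi^\la(\si)\ts \phi^\mu(\si)\ts \phi^\nu(\si)\ts,
$$
and expanding each $\phi^\la = \sum_{\tau}K(\tau,\la)\ts \chi^\tau$ using the formulas in~$\S$\ref{ss:Basic-reps} together with the character definition of $g(\tau,\pi,\rho)$ from~$\S$\ref{ss:Basic-Kron} yields the displayed identity.

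Next, given the identity, I would note that all the $K(\cdot,\cdot)$ and $g(\cdot,\cdot,\cdot)$ appearing in the sum are nonnegative. Assuming $\la\rdom \al$, Theorem~\ref{t:Kostka-dom} gives $K(\tau,\la)\le K(\tau,\al)$ for every $\tau\vdash n$, so replacing $K(\tau,\la)$ by $K(\tau,\al)$ term by term and keeping the other factors intact yields
$$
\rT(\la,\mu,\nu) \, \le \, \rT(\al,\mu,\nu)\ts.
$$
Because the three margin directions enter the triple-sum formula symmetrically (or simply because $\rT$ is invariant under permuting its arguments, as noted in $\S$\ref{ss:CT-count-def}), the same argument applied to the second and then the third margin gives \ts $\rT(\al,\mu,\nu)\le \rT(\al,\be,\nu)\le \rT(\al,\be,\ga)$ \ts from $\mu\rdom \be$ and $\nu\rdom \ga$. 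Chaining the three inequalities completes the proof.

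I do not anticipate a serious obstacle: the only nontrivial step is the triple-sum identity, and this is a standard character computation that directly generalizes the RSK expansion used in the $2$-dimensional case. Everything else is a termwise comparison combined with the already-established Theorem~\ref{t:Kostka-dom}.
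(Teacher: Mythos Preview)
Your argument is correct, and it takes a genuinely different route from the paper's proof.

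The paper proceeds by \emph{reduction to the two-dimensional case}: it slices a $3$-dimensional table $T\in\CT(\la,\mu,\nu)$ along the $x$-axis, records the resulting $1$-margins as a table $A\in\CT(\mu,\nu)$, and writes
\[
\rT(\la,\mu,\nu)\;=\;\sum_{A\in\CT(\mu,\nu)}\rT(\la,A)\;\le\;\sum_{A\in\CT(\mu,\nu)}\rT(\al,A)\;=\;\rT(\al,\mu,\nu),
\]
invoking Theorem~\ref{t:CT-dom-2dim} termwise; two further iterations finish the proof. Your approach instead establishes the triple-Kostka expansion
\[
\rT(\la,\mu,\nu)\;=\;\sum_{\tau,\pi,\rho\vdash n} g(\tau,\pi,\rho)\,K(\tau,\la)\,K(\pi,\mu)\,K(\rho,\nu)
\]
directly (via Burnside applied to $\MM^\la\otimes\MM^\mu\otimes\MM^\nu$) and then applies Theorem~\ref{t:Kostka-dom} once in each coordinate. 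This is the exact three-dimensional analogue of the RSK proof of Theorem~\ref{t:CT-dom-2dim}, with the Kronecker coefficient $g(\tau,\pi,\rho)$ playing the role of the Kronecker delta $\delta_{\tau\pi}$ in two dimensions. The identity you use is in fact derived later in the paper (in the proof of Theorem~\ref{t:Kron-3dim-CT}, from Schur's theorem), so your argument ties the two results together nicely. The paper's slicing argument, on the other hand, is purely combinatorial, bypasses Theorem~\ref{t:Kostka-dom} entirely in favor of the already-packaged Theorem~\ref{t:CT-dom-2dim}, and extends to $d$-dimensional tables by straightforward induction (as the paper remarks immediately after the proof).
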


\begin{proof}  For a contingency table \ts $T\in \CT(\la,\mu,\nu)$, let
\ts $A=A(T)\in\CT(\mu,\nu)$  \ts be a partition of the $1$-margins,
i.e.\ of sums along lines parallel to $x$~axis.
Thus, projecting along the $x$ axis and applying Theorem~\ref{t:CT-dom-2dim},
we have:
$$
\rT(\la,\mu,\nu) \, = \, \sum_{A\in \CT(\mu,\nu)} \. \rT(\la,A)
\, \le \, \sum_{A\in \CT(\mu,\nu)} \. \rT(\al,A)
\, = \, \rT(\al,\mu,\nu)\ts.
$$
Applying this two more times, we obtain:
$$
\rT(\la,\mu,\nu) \, \le \, \rT(\al,\mu,\nu)\, \le \, \rT(\al,\be,\nu)
\, \le \, \rT(\al,\be,\ga)\ts,
$$
as desired.
\end{proof}

\begin{rem}{\rm Theorem~\ref{t:CT-dom-3dim} can be easily generalized
to $d$-dimensional contingency tables.  The proof by induction
follows verbatim the proof above.
}\end{rem}

\bigskip

\section{Kostka numbers approach} \label{sec:2CT}

\subsection{Bounds on Kostka numebrs}  \label{ss:2CT-Kostka-bounds}
We start with the following easy but useful bounds:

\smallskip

\begin{lemma}\label{l:Kostka-CT} For all $\la,\mu\vdash n$ we have:
$$
K(\la,\mu) \, \le \, \rT(\la,\mu) \quad \ \text{and} \quad K(\la,\mu) \, \le \, \rB(\la',\mu)\ts.
$$
\end{lemma}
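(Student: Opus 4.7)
The plan is to exhibit, for each inequality, an explicit injection from $\SSYT(\la,\mu)$ into the relevant set of contingency tables. Both maps essentially just record the ``contents'' of an SSYT in the appropriate sense, and the inequality (rather than equality) reflects the fact that these injections forget the column-strict condition of an SSYT.

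For the first bound, I would send an SSYT $T \in \SSYT(\la,\mu)$ to the $\ell(\la) \times \ell(\mu)$ matrix $A(T) = (t_{ij})$, where $t_{ij}$ is the number of entries equal to $j$ in row $i$ of $T$. The row sum $\sum_j t_{ij} = \la_i$ because row $i$ of the diagram has $\la_i$ cells, and the column sum $\sum_i t_{ij} = \mu_j$ by the weight condition. Hence $A(T) \in \CT(\la,\mu)$. Injectivity holds because the weakly-increasing row condition forces a unique arrangement of entries in each row once the multiset is fixed. This gives $K(\la,\mu) \le \rT(\la,\mu)$.

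For the second bound, I would use the column description instead. Given $T \in \SSYT(\la,\mu)$, define the $\ell(\la') \times \ell(\mu)$ matrix $B(T) = (s_{ij})$ by
$$
s_{ij} \. := \. \begin{cases} 1 & \text{if the entry } j \text{ appears in column } i \text{ of } T, \\ 0 & \text{otherwise.} \end{cases}
$$
Strict increase down columns ensures that column $i$ contains $\la'_i$ distinct values, so $\sum_j s_{ij} = \la'_i$. Since each column contains at most one copy of $j$, the column sum $\sum_i s_{ij}$ counts the total number of $j$'s in $T$, which equals $\mu_j$. Thus $B(T)\in \BCT(\la',\mu)$, and injectivity is immediate because a strictly increasing finite sequence is determined by its underlying set. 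This gives $K(\la,\mu) \le \rB(\la',\mu)$.

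There is no real obstacle: the only point that requires attention is matching the row/column strictness built into an SSYT with the uniqueness of the ordering once the set (or multiset) of entries per row or column is known. Both maps are manifestly not surjective in general, because an arbitrary contingency table or binary table need not be realizable by an SSYT, so both inequalities are typically strict.
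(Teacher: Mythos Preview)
Your proof is correct and follows essentially the same approach as the paper: both inequalities are proved by the natural injection that records, for each row (resp.\ column) of the tableau, the multiset (resp.\ set) of entries it contains. Your write-up is in fact cleaner than the paper's on the second inequality, where the paper's phrasing (``the same encoding of $T'\in\SSYT(\la',\mu)$'') is somewhat ambiguous, whereas you spell out directly the column encoding of $T\in\SSYT(\la,\mu)$ and check the margins.
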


\begin{proof}
For the first inequality, observe that every tableau $T\in \SSYT(\la,\mu)$
is encoded by an $\ell\times m$ array $X_T=(x_{ij})$, where $x_{ij}$
is the number of $i$-s in $j$-th row.  Observe that $X_T \in \CT(\la,\mu)$
by the definition of $\SSYT(\la,\mu)$.  Thus, $T\to X_T$ is an
injection, which implies the claim.

For the second inequality,
notice that in the same encoding of $T'\in \SSYT(\la',\mu)$ the resulting
$X_T$ is binary.  This follows from the fact that tabelau~$T$ is strictly
increasing in columns.
\end{proof}

\begin{ex}{\rm
let $\la=\mu\vdash n$.  It is easy to see that $K(\la,\mu)=1$.
On the other hand, $\rT(\la,\mu)$ can be quite large.  For example,
for $\la=\mu=(k,k)$, $n=2k$, we have $\rT(\la,\mu) = k+1$.
For $\la=\mu=1^n$, we have $\rT(\la,\mu) = n!$, while $\rB(\la,\mu)=1$
gives a sharp bound.

When $\la=\la'$ be a self-conjugate partition,
$\rB(\la',\mu)=\rB(\la,\mu) \ssu \rT(\la,\mu)$, giving often
a better bound. In the case $\la=\mu=(\ell^\ell)$, $n=\ell^2$,
we have $K(\la,\mu)=\rB(\la,\mu)=1$ as $\RCT(\ell^\ell,\ell^\ell)$ in
this case is an all-one $\ell\times\ell$ array.
}\label{ex:Kostka-basic}
\end{ex}

\medskip

\subsection{Upper bound}
Let $\la,\mu,\nu \vdash n$, $\ell=\ell(\la)$, $m=\ell(\mu)$.
We somewhat extend the notation as follows.  For $A=(a_{ij})\in \CT(\la,\mu)$,
denote by \. $K(\nu,A)=\SSYT(\nu,A)$ \. the number of semistandard
Young tableaux of shape $\nu$ and weight \ts
$(a_{11},a_{12},\ldots, a_{\ell m})$.

\medskip

\begin{prop}\label{p:Kron-Kostka-CT}
Let $\la,\mu,\nu\vdash n$ and suppose $A \unlhd \CT(\la,\mu)$.  Then:
$$
g(\la,\mu,\nu) \, \le \, \sum_{B\in \CT(\la,\mu)} \. K(\nu,B)
\, \le \, \rT(\la,\mu) \. \cdot \. K(\nu,A) \, \le \, \rT(\la,\mu) \. \cdot \.
\rT(\nu,A)\ts.
$$
\end{prop}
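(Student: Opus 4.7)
The plan is to establish each of the three inequalities separately, using independent tools.

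For the first inequality $g(\la,\mu,\nu)\le \sum_{B\in\CT(\la,\mu)}K(\nu,B)$, I would compare $\chi^\la\cdot \chi^\mu$ with the corresponding product of permutation characters $\phi^\la\cdot\phi^\mu$. The standard Mackey-type computation gives a direct-sum decomposition
$$\MM^\la \otimes \MM^\mu \. \cong \. \bigoplus_{B\in\CT(\la,\mu)} \MM^B,$$
since the double cosets $S(\la)\backslash S_n/S(\mu)$ are in bijection with $\CT(\la,\mu)$ and the stabilizer of the coset indexed by $B=(B_{ij})$ is conjugate to $S(B):=\prod_{i,j} S_{B_{ij}}$. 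Taking characters and expanding $\phi^B=\sum_\theta K(\theta,B)\chi^\theta$ (where the Kostka number depends only on the multiset of entries of $B$), then extracting the coefficient of $\chi^\nu$, gives $\langle \phi^\la\phi^\mu,\chi^\nu\rangle=\sum_B K(\nu,B)$. On the other hand, the expansion $\phi^\eta=\sum_\theta K(\theta,\eta)\chi^\theta$ together with $K(\eta,\eta)=1$ shows that $\phi^\eta-\chi^\eta$ is a non-negative integer combination of irreducibles; since Kronecker coefficients are non-negative, the same is true of $\phi^\la\phi^\mu-\chi^\la\chi^\mu$, and extracting the coefficient of $\chi^\nu$ yields the first inequality.

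For the second inequality, I would use the invariance of $K(\nu,B)$ under permutation of the entries of $B$, so that $K(\nu,B)=K(\nu,\mathrm{sort}(B))$ for the partition $\mathrm{sort}(B)$ obtained by sorting the entries. The hypothesis $A\unlhd \CT(\la,\mu)$ gives $A \unlhd \mathrm{sort}(B)$ for every $B\in\CT(\la,\mu)$, so Theorem~\ref{t:Kostka-dom} yields $K(\nu,A)\ge K(\nu,B)$ for each $B$. Summing over all $\rT(\la,\mu)$ tables in $\CT(\la,\mu)$ gives the bound. The third inequality is then immediate from Lemma~\ref{l:Kostka-CT} applied to the pair $(\nu, A)$, noting that Kostka numbers and contingency-table counts depend on the weight only up to reordering.

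The main (but mild) obstacle is in the first step: one must combine the double-coset decomposition of $\MM^\la\otimes\MM^\mu$ with the positivity observation $\phi^\eta\ge\chi^\eta$ and with closure of non-negative combinations of irreducible characters under tensor product. Each ingredient is standard, but the bookkeeping requires care in treating $B$ as a composition rather than a partition, so that both $\phi^B$ and $K(\nu,B)$ are well-defined and inherit the correct invariance under sorting of entries. The remaining two inequalities are formal consequences of Theorem~\ref{t:Kostka-dom} and Lemma~\ref{l:Kostka-CT}, respectively, with no further analysis needed.
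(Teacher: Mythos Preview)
Your proposal is correct and is essentially the paper's first proof: the Mackey/double-coset decomposition you invoke is exactly the James--Kerber identity $\phi^\la\phi^\mu=\sum_{B\in\CT(\la,\mu)}\phi^B$, your positivity step $\phi^\eta\ge\chi^\eta$ is the paper's extraction of the $\al=\la$, $\be=\mu$ term from $\phi^\la\phi^\mu=\sum_{\al,\be}K(\al,\la)K(\be,\mu)\chi^\al\chi^\be$, and the remaining two inequalities are handled identically via Theorem~\ref{t:Kostka-dom} and Lemma~\ref{l:Kostka-CT}. The paper also records a second proof via the Schur-function expansion of $s_\nu[\bx\cdot\by]$, but your argument matches the first proof in all substantive respects.
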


\begin{proof}[First proof]
For the first inequality, recall a result by James and Kerber
\cite[Lemma~2.9.16]{JK} that
\begin{equation}\label{eq:JK-lemma}
\phi^\la \cdot \phi^\mu \, = \, \sum_{B\in \CT(\la,\mu)} \. \phi^B
\, = \, \sum_{B\in \CT(\la,\mu)} \. \sum_{\nu\rdom B} \. \chi^\nu\.,
\end{equation}
where $\phi^B$ is an induced representation corresponding to
ordering of \ts $\{b_{ij}\}$ in the contingency table \ts
$B=(b_{ij})\in \CT(\la,\mu)$.  On the other hand,
\begin{equation}\label{eq:JK-dom}
\phi^\la \cdot \phi^\mu \, = \, \sum_{\al\rdom\la} \. \sum_{\be\rdom\mu} \.
K(\al,\la) \ts K(\be,\mu)  \, \bigl[\chi^\al\cdot \chi^\be\bigr] \, \ge \,
\chi^\la \cdot \chi^\mu \, = \,
\sum_{\nu\vdash n} \. g(\la,\mu,\nu) \. \chi^\nu\.,
\end{equation}
where the inequality follows from Theorem~\ref{t:Kostka-dom} by taking
the term $\al=\la$ and $\be=\mu$. Comparing terms in $\chi^\nu$ in
equation~\eqref{eq:JK-lemma} and inequality~\eqref{eq:JK-dom} finishes
the proof of the first inequality.

The second inequality follows immediately from Theorem~\ref{t:Kostka-dom} and
the definition of~$A$. The last inequality follows from
Lemma~\ref{l:Kostka-CT} above.
\end{proof}

\begin{proof}[Second proof]
Another way to see all the inequalities in the statement is through the following Schur function
identities and inequalities:
\begin{align*}
g(\la,\mu,\nu) \, &= \, \< \, s_{\nu}[\. \bx \cdot \by \ts] \, , \, s_\la(\bx) \. s_\mu(\by)\, \>  \ \, = \, \
\Bigl\< \, \sum_{B} \. K(\nu,B) \. \prod_{i,j} \. (x_i y_j)^{B_{i,j}} \, , \, s_\la(\bx)\. s_\mu(\by) \,\Bigr\>  \\
&\leq \,
\sum_B \. K(\nu,B) \. \Bigl\< \, \bx^{row B} \. \by^{col B} \, , \, \Bigl( \, \sum_{\al \unlhd \la} \. K(\la,\al) \. s_\al(\bx)\, \Bigr) \Bigl(\, \sum_{\be \unlhd \mu} \. K_{\mu,\be} \. s_{\be}(\by) \, \Bigr) \,\Bigr\rangle \\
&\le \, \sum_B \. K(\nu, B) \. \bigl\< \. \bx^{row B} \. \by^{col B} \, , \, h_\la(\bx)\. h_\mu(\by) \. \bigr\> \, \  = \, \ \sum_{B \in \CT(\la,\mu)} \.  K(\nu,B) \, ,
\end{align*}
since the monomial  and the homogenous symmetric functions are orthonormal.
Here $B$ goes through all $2$-dimensional tables and $rowB, colB$ are the row/column sums of its entries.
\end{proof}

\smallskip

\begin{ex}{\rm
Let $n=\ell^3$, $k=\ell^2$, $\la=\mu=(k^\ell)$, $\al=\ell$, and $\ell\to \infty$.
Then
$$
\rT(\la,\mu) \, \le \, \exp \bigl[\ell^2\log \ell \. + \. O(\ell^2)\bigr]\..
$$
by Example~\ref{ex:motiv-2dim}.  We can take $A = (a_{ij})$, where
$a_{ij}=\ell$ for all $1\le i,j\le \ell$.  Denote
by \ts $\ga = \bigl(\ell^{\ell^2}\bigr)$ \ts the corresponding partition.
Let $\nu\vdash n$, s.t.\ $\ell(\nu)=r$.   Then
$$\aligned
g(\la,\mu,\nu) \. & \le \. \rT(\la,\mu)\cdot K(\nu,\ga) \. \le \.
\exp \bigl[\ell^2\log \ell \ts + \ts O(\ell^2)\bigr] \. \cdot \.
\binom{\ell+r-1}{r-1}^{\ell^2}
 \\
 & \le \, \exp \bigl[\ell^2\log \ell \ts + \ts O(\ell^2)\bigr] \. \cdot \.
 (\ell+r-1)^{(r-1)\ell^2} \, \le \, \exp\.\bigl[r\ts \ell^2\ts \log(\ell+r) \. + \. O(\ell^2)\bigr]
\endaligned
$$
Here the second inequality follows from the structure of $\SSYT(\nu,\ga)$.
We need to place $\ell$ numbers~$a$ into $r$~rows, for all $1\le a\le \ell^2$.
Note that these favorably compare to the dimension bounds for $r=o(\ell)$,
see Example~\ref{ex:motiv-HLF}.
}\label{ex:motiv-Kostka}
\end{ex}

\begin{rem}{\rm The reason these estimates are reasonable, is because
the irreducible rep $\SS^\la$ is the largest part of the induced representation
\ts $\MM^\la$. On the other hand, even in this case the lower
bound is not expected to be anywhere close.  Cf.\ \cite[Cor.~3.12]{PPY},
which gives $\la=(a-1)^{a^2}$ and $g(\la,\la,\la)=0$.
}
\end{rem}

\begin{thm}\label{t:Kron-Kostka-rows}
Let $\la,\mu,\nu\vdash n$ such that $\ell(\la)=\ell$, $\ell(\mu) = m$,
and $\ell(\nu)=r$.  Then:
$$g(\la,\mu,\nu) \, \le \, \left(1+\frac{\ell m r}{n}\right)^n \left(1+\frac{\ell m}{n}\right)^n
\left(1+\frac{n}{\ell m r}\right)^{\ell m r} \left(1+\frac{n}{\ell m}\right)^{\ell m}\ts.
$$
\end{thm}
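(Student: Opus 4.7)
The plan is to apply Proposition~\ref{p:Kron-Kostka-CT} with the most dominated possible choice of $A$, and then bound the two resulting factors $\rT(\la,\mu)$ and $\rT(\nu,A)$ by invoking Theorem~\ref{t:barv-2dim}. The crucial observation is that the function $\phi(z) := (z+1)\log(z+1) - z\log z$ appearing in Theorem~\ref{t:barv-2dim} is strictly concave on $[0,\infty)$ (its second derivative is $-1/(z(z+1)) < 0$), so Jensen's inequality produces a universal upper bound on $g(Z) = \sum \phi(z_{ij})$ that depends only on the total sum $\sum z_{ij} = n$ and the number of cells, and not on the margins themselves. This sidesteps the need to solve the concave program explicitly.

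First I would take $A = A^\ast$ to be the balanced partition of~$n$ with at most $\ell m$ parts, whose entries are $\lceil n/(\ell m) \rceil$ and $\lfloor n/(\ell m) \rfloor$. This $A^\ast$ is the unique minimum in dominance order among partitions of $n$ with at most $\ell m$ parts; since every $B \in \CT(\la,\mu)$, viewed as a weight, has at most $\ell m$ nonzero entries, we obtain $A^\ast \unlhd B$ for every such~$B$. Proposition~\ref{p:Kron-Kostka-CT} then yields
\[
g(\la,\mu,\nu) \, \le \, \rT(\la,\mu) \cdot K(\nu, A^\ast) \, \le \, \rT(\la,\mu) \cdot \rT(\nu, A^\ast)\ts.
\]

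Next, for the first factor, applying Theorem~\ref{t:barv-2dim} and then Jensen's inequality to the $\ell m$ entries of the optimizer $Z \in \RCT(\la,\mu)$ gives
\[
g(Z) \, = \, \sum_{i,j} \phi(z_{ij}) \, \le \, \ell m \cdot \phi\bigl(n/(\ell m)\bigr) \, = \, n \log\bigl(1 + \ell m/n\bigr) \. + \. \ell m \log\bigl(1 + n/(\ell m)\bigr)\ts,
\]
using the identity $N \phi(n/N) = n \log(1 + N/n) + N \log(1 + n/N)$. Exponentiating yields $\rT(\la,\mu) \le (1 + \ell m/n)^n (1 + n/(\ell m))^{\ell m}$. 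Repeating the same argument for the $r \times \ell m$ table, i.e.\ in $\RCT(\nu, A^\ast)$ with $N = \ell m r$ cells, gives $\rT(\nu, A^\ast) \le (1 + \ell m r/n)^n (1 + n/(\ell m r))^{\ell m r}$. Multiplying the two estimates produces exactly the bound in the statement.

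The main (very mild) obstacle is simply verifying that the Jensen step produces the clean closed form above and that $A^\ast$ is genuinely the dominance minimum; both are routine. Note that we do not need $A^\ast$ itself to lie in $\CT(\la,\mu)$---only the dominance relation $A^\ast \unlhd B$ for every $B \in \CT(\la,\mu)$ is used, and this depends solely on $A^\ast$ having the correct total sum and at most $\ell m$ parts. No asymptotic analysis or optimization over $Z$ is required.
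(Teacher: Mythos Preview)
Your proof is correct and reaches the same bound as the paper, but by a cleaner route.  Both arguments start identically: take the balanced composition $A^\ast$ on $\ell m$ parts, note that it is dominated by every $B\in\CT(\la,\mu)$, and apply Proposition~\ref{p:Kron-Kostka-CT} to get $g(\la,\mu,\nu)\le \rT(\la,\mu)\cdot\rT(\nu,A^\ast)$.  The divergence comes at the estimation step.  The paper first proves a separate lemma (Lemma~\ref{l:G-maj}) showing that Barvinok's bound $G(\la,\mu)=e^{g(Z)}$ is monotone under majorization of margins, via a Robin~Hood--type transfer argument on the maximizer; it then applies this lemma to pass from $G(\la,\mu)$ to $G\bigl((n/\ell)^\ell,(n/m)^m\bigr)$, where the maximizer is uniform by symmetry and the value can be computed explicitly.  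You instead observe that $\phi(z)=(z+1)\log(z+1)-z\log z$ is concave on $[0,\infty)$, so Jensen bounds $g(Z)\le \ell m\,\phi(n/\ell m)$ for \emph{every} $Z$ in the transportation polytope, regardless of margins.  This single line replaces the entire majorization lemma: it yields directly the special case $G(\la,\mu)\le G\bigl((n/\ell)^\ell,(n/m)^m\bigr)$ that the paper actually uses, without ever locating the maximizer or tracking it under margin perturbations.  The paper's Lemma~\ref{l:G-maj} is more general (it compares $G$ at any two pairs of majorized margins), but for the purpose of proving this theorem your Jensen shortcut is both sufficient and noticeably more economical.
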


\medskip

\begin{rem}{\rm When $\la=\mu'\vdash n$, the smallest $A^\ast$ is a $0/1$ matrix,
so $\ga=(1^n)$. In this case $K(\nu,\ga) = f^\nu$ and the bound in the theorem is
useless since we already have $g(\la,\mu,\nu)\le f^\nu$.  This is why it helps to
have $\ell(\la), \ell(\mu) = o(\sqrt{n})$.
}
\end{rem}

\bigskip

\subsection{Proof of the upper bound} We start with the following useful result:

\begin{lemma} \label{l:G-maj}
Let \ts $\la=(\la_1,\ldots,\la_\ell)$, $\al = (\al_1,\ldots,\al_\ell)$,
$\la,\al \in \rr_+^\ell$.  Similarly, let \ts $\mu = (\mu_1,\ldots,\mu_m)$,
$\be = (\be_1,\ldots,\be_m)$, $\mu,\be\in \rr_+^m$.  Denote by
$G(\la,\mu):=e^{g(Z)}$ the upper bound in Theorem~\ref{t:barv-2dim}.
Then:
$$G(\la,\mu) \, \le \, G(\al,\be) \quad \text{for all}
\quad \la \rdom \al, \, \mu \rdom \be\ts.
$$
\end{lemma}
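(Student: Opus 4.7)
The plan is to establish two structural properties of the function $F(\la,\mu) := g(Z) = \log G(\la,\mu)$ and combine them via the Hardy--Littlewood--Polya characterization of dominance order. The two properties are: (i) $F$ is jointly concave in $(\la,\mu)$, and (ii) $F$ is invariant under permuting the entries of $\la$ (and of $\mu$). Together these yield Schur concavity in each argument, which is exactly the claim $G(\la,\mu) \le G(\al,\be)$ when $\la \rdom \al$ and $\mu \rdom \be$.

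\emph{Step 1: Concavity of $F$.} Given maximizers $Z^{(0)}$ for $\RCT(\la^{(0)},\mu^{(0)})$ and $Z^{(1)}$ for $\RCT(\la^{(1)},\mu^{(1)})$, the convex combination $Z(t) := (1-t)Z^{(0)} + tZ^{(1)}$ lies in $\RCT((1-t)\la^{(0)}+t\la^{(1)},\,(1-t)\mu^{(0)}+t\mu^{(1)})$ by linearity of the margin constraints. Since the one-variable function $\phi(z) = (z+1)\log(z+1) - z\log z$ is concave on $\rr_{\ge 0}$ and $g$ is a separable sum $\sum_{i,j}\phi(z_{ij})$, we get $g(Z(t)) \ge (1-t)g(Z^{(0)}) + t\,g(Z^{(1)})$. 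Taking the maximum on the left yields concavity of $F$ jointly in $(\la,\mu)$.

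\emph{Step 2: Permutation invariance.} For any $\pi \in S_\ell$, the row-permutation map $Z \mapsto Z^\pi$ is a bijection $\RCT(\la,\mu) \to \RCT(\pi\la,\mu)$ that preserves $g$, since $g$ depends only on the multiset of entries of $Z$. Hence $F(\pi\la,\mu) = F(\la,\mu)$, and the analogous statement holds for $S_m$ acting on $\mu$.

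\emph{Step 3: From majorization to the inequality.} By Hardy--Littlewood--Polya, $\la \rdom \al$ is equivalent to $\al = \sum_k c_k \pi_k \la$ for permutations $\pi_k \in S_\ell$ and convex weights $c_k \ge 0$. Combining Steps 1 and 2,
$$F(\al,\mu) \, \ge \, \sum_k c_k \, F(\pi_k \la, \mu) \, = \, F(\la,\mu).$$
Running the same argument in the second coordinate gives $F(\al,\be) \ge F(\al,\mu)$, and chaining the two inequalities proves $G(\la,\mu) \le G(\al,\be)$. I do not anticipate a serious obstacle here: the three ingredients (concavity of a separable entropy-like functional, symmetry of the constraint polytope under relabeling coordinates, and Hardy--Littlewood--Polya) are all standard. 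The only small bookkeeping item is that $\sum_i \la_i = \sum_j \mu_j$ is preserved under dominance, so all polytopes appearing are nonempty and the convex combinations of maximizers make sense.
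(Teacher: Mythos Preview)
Your argument is correct, and it takes a cleaner, more structural route than the paper's proof. The paper proceeds by reducing to a single elementary ``Robin Hood'' transfer $\al = \la - \eps\, e(i,j)$ in one margin at a time; it then takes the maximizer $w$ for $(\la,\mu)$, redistributes mass in rows $i$ and $j$ to build a feasible point $z$ for $(\al,\mu)$, and checks $g(z) \ge g(w)$ using the one-variable fact that $f(a-x)+f(b+x)$ is increasing in $x \in [0,(a-b)/2]$ for concave $f$ with $a>b$. Your proof packages the same concavity of $\phi(z)=(z+1)\log(z+1)-z\log z$ into the global statement that $F = \max g$ is concave on the linearly-parametrized family $\RCT(\la,\mu)$, notes the $S_\ell \times S_m$ symmetry, and then invokes Hardy--Littlewood--P\'olya/Birkhoff to get Schur concavity in one stroke. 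What you gain is brevity and immediate generalization (the same three-line argument works verbatim for the $3$-dimensional bound $G(\la,\mu,\nu)$ in Theorem~\ref{t:barv-3dim}, and indeed for any dimension); what the paper's hands-on construction gains is that it avoids citing Birkhoff's theorem and makes the mechanism of the inequality completely explicit at the level of individual entries.
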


\begin{proof}
As in the discrete case, it is easy to see that if  two weakly decreasing sequences $\la,\al$ majorize one another, $\la \rdom \al$, then $\la$ can be obtained from $\al$ by a finite sequence of operations adding vectors of the form $\eps e(i,j)$, where $e(i,j)_r =0$ for $r \neq i,j$ and $e(i,j)_i=1, e(i,j)_j=-1$, see e.g.~\cite[$\S$2]{MOA}. Now it is enough to prove the inequality in the case when $\mu=\be$ and $\la = \al + \eps e(i,j)$, and apply it consecutively in the algorithm obtaining $(\al,\be)$ from $(\la,\mu)$ by changing $\al$ to $\la$ first, and then $\be$ to $\mu$.

Let $w \in \mathbb{R}_+^{\ell^2}$ be the unique maximizer of $g(Z)$ for $G(\la,\mu)$, and let $\al = \la-\eps e(i,j)$ and assume for simplicity that $i=1$, $j=2$. Consider the $2\times \ell$ section with rows $1,2$, and let its column margins be $a_1,\ldots,a_\ell$. We have:
$$\sum_{i=1}^\ell \ts  \bigl(w_{1i} - w_{2i}\bigr) \. = \. \la_1-\la_2 \. \geq \. 2\ts\eps\ts,
$$
so the positive terms among \ts $(w_{1i}-w_{2i})$ \ts add up to at least $2\eps$.  Assume for simplicity that the positive terms are for \ts $i=1,\ldots,r$, and choose \ts $0 \leq \eps_i \leq \frac12 (w_{1i}-w_{2i})$, so that $\eps_1+\ldots +\eps_r=\eps$. Let \ts
$z_{ij} = w_{ij}$ \ts for $i\neq 1,2$ or $j>r$, and let \ts $z_{1j}=w_{1j} - \eps_j$, $z_{2j} = w_{2j} +\eps_j$. Then $z$ has margins $(\al,\mu)$, and we will show that $g(z) \geq g(w)$.

To see this, let $f(x) = (1+x)\log(1+x) - x \log x$, and note that
$f(a-x)+f(b+x)$ is increasing when $a > b$ and $x\in[0, \frac{a-b}{2}]$. Hence,
$$f(z_{1j})+f(z_{2j}) \. = \. f(w_{1j} - \eps_j) + f(w_{2j}+\eps_j) \. \geq \. f(w_{1j})+f(w_{2j})$$
for $j=1,\ldots,r$, and equal for the other indices.  Thus, $g(z) \geq g(w)$.
We have:
$$G(\al,\mu) \. = \. \max_{Z \in \mathcal{P}(\al,\mu) } \exp g(Z) \. \geq \.
\exp g(z) \. \geq \.\exp g(w) \. = \. G(\la,\mu)\ts,
$$ which completes the proof.
\end{proof}

\bigskip

Note that $\al,\be$ in the lemma are not necessarily integral.
When they are in fact integer partitions, this lemma on majorization
over margins is motivated by the similar majorization
inequality for the number of contingency tables:
$$
\rT(\la,\mu) \ \le \rT(\al,\be) \quad \text{for all} \quad
\la \rdom \al, \, \mu \rdom \be\ts,
$$
see e.g.~\cite{B1}.

\begin{proof}[Proof of Theorem~\ref{t:Kron-Kostka-rows}]
Let $\tau = \bigl(\lceil n/\ell m\rceil^a\lfloor n/\ell m\rfloor^b \bigr)$, s.t.\
$a+b=\ell m$. Clearly,  $\tau \dom \CT(\la,\mu)$.
By Proposition~\ref{p:Kron-Kostka-CT},  we have:
$$
g(\la,\mu,\nu) \, \le \, \rT(\la,\mu) \. \cdot \.
\rT(\nu,\tau)\ts.
$$
Let $\al:=(n/\ell)^\ell$, $\be := (n/m)^m$, $\ga := (n/r)^r$, $\om=(n/\ell m)^{\ell m}$.
By Theorem~\ref{t:barv-2dim} and Lemma~\ref{l:G-maj}, we have:
$$\aligned
\rT(\la,\mu)  \, & \le \,  G(\la,\mu) \, \le \, G(\al,\be) \\
\rT(\nu,\tau)  \, & \le \,  G(\nu,\tau) \, \le \, G(\ga,\om)\ts.
\endaligned
$$
A direct calculation gives:
$$
\aligned
G(\al,\be) \, & = \, \exp \. \left[\ell\ts m \cdot
\left[\left(\frac{n}{\ell m} + 1\right) \log \left(\frac{n}{\ell m} + 1\right) \. - \. \frac{n}{\ell m} \log \frac{n}{\ell m}\right]\right] \\
& = \, \exp \.  \left[n \.\log \left(\frac{\ell m}{n} + 1\right) \. + \. m\ts\ell \.\log \left(\frac{n}{\ell m} + 1\right)\right]\,,
\endaligned
$$
and
$$
\aligned
G(\ga,\om) \, & = \, \exp \. \left[\ell\ts m\ts r \cdot
\left[\left(\frac{n}{\ell m r} + 1\right) \log \left(\frac{n}{\ell m r} + 1\right) \. - \. \frac{n}{\ell mr} \log \frac{n}{\ell m r}\right]\right] \\
& = \, \exp \.  \left[n \.\log \left(\frac{\ell m r}{n} + 1\right) \. + \. \ell\ts m\ts r \.\log \left(\frac{n}{\ell m r} + 1\right)\right]\.
\endaligned
$$
Combining the bound above gives the result.
\end{proof}


\bigskip

\section{The 3-dimensional CTs approach} \label{sec:3CT}

Let $\la,\mu,\nu\vdash n$.  Denote by $\rT(\la,\mu,\nu)$
the number of $3$-dimensional contingency tables with $2$-dimensional
sums given by $\la$, $\mu$ and $\nu$.

\begin{thm} \label{t:Kron-3dim-CT}
We have: \.
$g(\la,\mu,\nu) \ts \le \ts\rT(\la,\mu,\nu)$.
\end{thm}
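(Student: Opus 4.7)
The plan is to generalize the argument behind Proposition~\ref{p:Kron-Kostka-CT} by one dimension, working directly with a triple product of induced characters. First, I would rewrite $g(\la,\mu,\nu)$ as the multiplicity of the trivial representation in a triple tensor product: since $S_n$-characters are real valued,
$$g(\la,\mu,\nu) \, = \, \langle \chi^\la\chi^\mu,\ts\chi^\nu\rangle \, = \, \langle \chi^\la\chi^\mu\chi^\nu,\ts 1\rangle \, = \, \frac{1}{n!}\sum_{\si\in S_n}\chi^\la(\si)\ts\chi^\mu(\si)\ts\chi^\nu(\si)\ts.$$

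Next, I would dominate each irreducible character by its induced counterpart~$\phi^\al$. Since $\phi^\al=\sum_{\be\rdom \al} K(\be,\al)\ts\chi^\be$ and $K(\al,\al)=1$, the difference $\phi^\al-\chi^\al$ is an effective character for each $\al\in\{\la,\mu,\nu\}$. Expanding $\phi^\la\phi^\mu\phi^\nu$ via $\phi^\al=\chi^\al+(\phi^\al-\chi^\al)$ and using that a product of effective characters is effective, the difference $\phi^\la\phi^\mu\phi^\nu-\chi^\la\chi^\mu\chi^\nu$ is itself a genuine character, so pairing with~$1$ preserves non-negativity and
$$g(\la,\mu,\nu) \, \le \, \langle \phi^\la\phi^\mu\phi^\nu,\ts 1\rangle.$$

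Finally, I would identify the right-hand side with $\rT(\la,\mu,\nu)$. Since $\phi^\al$ is the permutation character of the $S_n$-action on ordered set partitions of~$[n]$ of type~$\al$, Burnside's lemma identifies $\langle \phi^\la\phi^\mu\phi^\nu,\ts 1\rangle$ with the number of $S_n$-orbits on triples $(\mathbf A,\mathbf B,\mathbf C)$ of ordered set partitions with block sizes $\la$, $\mu$, $\nu$, respectively. The map sending such a triple to its intersection array
$$T \. = \. (t_{ijk})\ts, \qquad t_{ijk} \. := \. |A_i\cap B_j\cap C_k|\ts,$$
is $S_n$-invariant, and any two triples with equal intersection arrays are related by a permutation matching the atoms $A_i\cap B_j\cap C_k$ block-by-block, so its fibres are exactly the orbits. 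The axis-wise marginals of~$T$ recover $\la$, $\mu$, $\nu$, yielding a bijection between orbits and $\CT(\la,\mu,\nu)$.

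The only non-formal step is this last bijection, and its content is classical; the first two steps are routine character arithmetic. An equivalent hands-on route, which I expect to produce the same bound, is to iterate the James--Kerber identity~\eqref{eq:JK-lemma}: applying it once to $\phi^\la\phi^\mu$ gives $g(\la,\mu,\nu)\le \sum_{B\in \CT(\la,\mu)} K(\nu,B)$ as in Proposition~\ref{p:Kron-Kostka-CT}, after which Lemma~\ref{l:Kostka-CT} yields $K(\nu,B)\le \rT(\nu,B)$, and the sum $\sum_B \rT(\nu,B)$ collapses to $\rT(\la,\mu,\nu)$ by projecting each $3$-dimensional table onto its $(\la,\mu)$-marginal.
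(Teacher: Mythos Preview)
Your argument is correct. It is, in content, the same proof as the paper's, but phrased in the language of characters rather than symmetric functions. The paper invokes \emph{Schur's theorem}
\[
\sum_{\la,\mu,\nu} g(\la,\mu,\nu)\, s_\la(\bx)\, s_\mu(\by)\, s_\nu(\bz) \;=\; \prod_{i,j,k}\frac{1}{1-x_iy_jz_k} \;=\; \sum_{\al,\be,\ga} \rT(\al,\be,\ga)\, m_\al(\bx)\, m_\be(\by)\, m_\ga(\bz),
\]
and extracts the coefficient of \ts $\bx^\la\by^\mu\bz^\nu$ \ts to obtain
\[
\rT(\la,\mu,\nu) \;=\; \sum_{\al\rdom\la,\,\be\rdom\mu,\,\ga\rdom\nu} g(\al,\be,\ga)\, K(\al,\la)\, K(\be,\mu)\, K(\ga,\nu) \;\ge\; g(\la,\mu,\nu),
\]
keeping only the diagonal term. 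Under the Frobenius characteristic map this is exactly your identity \ts $\langle \phi^\la\phi^\mu\phi^\nu,1\rangle=\rT(\la,\mu,\nu)$ \ts together with the expansion $\phi^\al=\sum_\be K(\be,\al)\chi^\be$; your Burnside/orbit argument supplies a direct combinatorial proof of that identity in place of quoting Schur's theorem. The paper's route is a one-line citation; yours is self-contained and makes the combinatorics (triples of ordered set partitions modulo $S_n$ $\leftrightarrow$ $3$-dimensional contingency tables) explicit. Your alternative route via iterating the James--Kerber identity and Lemma~\ref{l:Kostka-CT} is also valid and gives a third, more inductive, derivation of the same bound.
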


\begin{proof}  Recall \emph{Schur's theorem}\footnote{Sometimes
also called \emph{generalized Cauchy identity}.}~\cite[Exc.~7.78f]{EC2}, that:
$$
\sum_{(\la,\mu,\nu)\in \cP^3} \. g(\la,\mu,\nu) \, s_\la(\bx) \. s_\mu(\by) \. s_\nu(\bz) \,
= \, \sum_{(\al,\be,\ga)\in \cP^3} \. \rT(\al,\be,\ga) \, m_\al(\bx) \. m_\be(\by) \ts m_\ga(\bz) \,.
$$
Taking the coefficients in \ts $\bx^\al \ts \by^\be \ts \bz^\ga$ \ts on both sides gives:
$$
\rT(\al,\be,\ga) \, = \, \sum_{\la \rdom \al, \ts \mu\rdom \be, \ts \nu\rdom \ga} \.
g(\la,\mu,\nu) \. K_{\la\al} \. K_{\mu\be} \. K_{\nu\ga} \, \ge \,g(\al,\be,\ga)\ts,
$$
where the inequality follows from \ts $K_{\al\al}=1$ \ts for all $\ts \al\vdash n$.
\end{proof}

\smallskip

\begin{ex}{\rm Let $n=\ell^3$, $k=\ell^2$,
$\la=\mu=(k^\ell)$, and $\ell\to \infty$.
Let $\nu = (n/r)^r \vdash n$, where \ts $\ell(\nu)=r=o(\ell)$.
By Theorem~\ref{t:barv-3dim}, we have:
$$g(\la,\mu,\nu) \. \le \. \rT(\la,\mu,\nu) \. \le \.
G(\la,\mu,\nu) \. = \.  \exp \ts g(Z),
$$
where $Z=(z_{ijs})$, $z_{ijs}=1/\ell^2r$.  We have:
$$
\aligned
g(Z) \, & = \, \ell^2r \ts \left[ \left(\frac{n}{\ell^2 r} +1\right)\log\left(1+\frac{n}{\ell^2 r}\right) \. - \.
\left(\frac{n}{\ell^2 r}\right)\log\left(\frac{n}{\ell^2 r}\right)\right] \\
& = \,
n \ts \log\left( 1 + \ell^2r/n\right) \. + \. \ell^2r \ts \Bigl[\log n \.
+ \. \log\left(1+\ell^2r/n\right) \. - \. \log(\ell^2 r) \Bigr]\\
& = \, \ell^2r + O\left((\ell^2 r)^2/n\right) \.  + \. \ell^2 r\ts \left(3\log\ell \. + \. O(\ell^2 r/n) \. - \.
2\log\ell-\log r \right) \\
& = \, \ell^2r \ts \log \ell \. - \.\ell^2r \ts \log r \. + \. O(\ell^2 r)\ts.
\endaligned
$$
Therefore,
$$
g(\la,\mu,\nu) \, \leq \, \exp \. \bigl[\ell^2r \ts \log \ell \. - \.\ell^2r \ts \log r \. + \. O(\ell^2 r) \bigr]\ts.
$$
Note that this is a stronger bound than the one in Example~\ref{ex:motiv-Kostka}.
}\label{ex:motiv-3dim}
\end{ex}

\smallskip

\begin{thm}\label{t:Kron-3dim-rows}
Let $\la,\mu,\nu\vdash n$ such that $\ell(\la)=\ell$, $\ell(\mu) = m$,
and $\ell(\nu)=r$.  Then:
$$g(\la,\mu,\nu) \, \le \, \left(1+\frac{\ell m r}{n}\right)^n
\left(1+\frac{n}{\ell m r}\right)^{\ell m r} .
$$
\end{thm}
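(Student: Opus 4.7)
The plan is to feed Theorem~\ref{t:Kron-3dim-CT} into Theorem~\ref{t:barv-3dim} and then optimize the resulting bound over the margins. Combining those two results already gives
$$
g(\la,\mu,\nu) \, \le \, \rT(\la,\mu,\nu) \, \le \, G(\la,\mu,\nu) \, := \, \exp g(Z),
$$
where $Z \in \RCT(\la,\mu,\nu)$ is the unique maximizer of $g$. So the whole task reduces to bounding $G(\la,\mu,\nu)$ from above by a clean closed form, and the natural target is to reduce to the ``uniform'' margins.

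The key step is a three-dimensional analog of Lemma~\ref{l:G-maj}: for real nonnegative margin vectors with matching lengths and \ts $\la \rdom \al$, $\mu \rdom \be$, $\nu \rdom \ga$, one has \ts $G(\la,\mu,\nu) \, \le \, G(\al,\be,\ga)$. Exactly as in the 2d proof, it suffices to treat a single elementary majorization step that only affects one of the three margins; say $\la \to \al$ by moving mass $\ve$ from coordinate $1$ to coordinate $2$, with $\mu, \be, \nu, \ga$ unchanged. Starting from the maximizer $W=(w_{ijk})$ for $(\la,\mu,\nu)$, I would transfer mass \emph{only} between the two slices orthogonal to the first axis, setting
$z_{1jk} = w_{1jk} - \ve_{jk}$ and $z_{2jk} = w_{2jk} + \ve_{jk}$ for a collection of $\ve_{jk}$'s with $0 \le \ve_{jk} \le \tfrac12(w_{1jk}-w_{2jk})$ summing to $\ve$. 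Such a choice exists since \ts $\sum_{jk}(w_{1jk}-w_{2jk}) = \la_1-\la_2 \ge 2\ve$. The point of constraining the transfer to a single axis is that the $\mu$-margins and $\nu$-margins (which sum over the first index) are automatically preserved. The same pointwise inequality \ts $f(a-x)+f(b+x) \ge f(a)+f(b)$ \ts for $f(x) = (x+1)\log(x+1)-x\log x$ \ts used in Lemma~\ref{l:G-maj} \ts then yields $g(Z) \ge g(W)$, hence $G(\al,\mu,\nu) \ge G(\la,\mu,\nu)$. Three applications give the full statement.

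Finally I set \ts $\al=(n/\ell)^\ell$, $\be=(n/m)^m$, $\ga=(n/r)^r$. The polytope $\RCT(\al,\be,\ga)$ is invariant under $S_\ell\times S_m\times S_r$, so by strict concavity the unique maximizer must be the constant table \ts $z_{ijk}\equiv n/(\ell m r)$. A direct evaluation with $t := n/(\ell m r)$ gives \ts $g(Z) = \ell m r \cdot \bigl[(1+t)\log(1+t) - t\log t\bigr]$, which rearranges to \ts $n\log(1+\ell m r/n)+\ell m r \log(1+n/(\ell m r))$. Exponentiating delivers the claimed bound.

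The only genuinely new work is the three-dimensional majorization lemma, and I expect this to be the main obstacle, though a modest one: the subtlety is simply to verify that restricting the mass-transfer to a single axis is what makes the other two sets of $2$-margins survive intact, so that the one-parameter reduction in Lemma~\ref{l:G-maj} goes through verbatim. Everything else is bookkeeping and a routine calculation in the uniform case.
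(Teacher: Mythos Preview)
Your proposal is correct and follows essentially the same route as the paper: combine Theorem~\ref{t:Kron-3dim-CT} with Theorem~\ref{t:barv-3dim}, then use a three-dimensional version of Lemma~\ref{l:G-maj} to pass to the uniform margins \ts $\al=(n/\ell)^\ell$, $\be=(n/m)^m$, $\ga=(n/r)^r$, where the symmetric maximizer gives the stated bound. The paper's proof is terser --- it simply cites the earlier computation $G(\al,\be,\ga)=G(\ga,\om)$ from the proof of Theorem~\ref{t:Kron-Kostka-rows} and leaves the 3d majorization implicit --- whereas you spell out the slice-to-slice transfer argument, which is exactly the right justification.
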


\smallskip

This is clearly a better bound than the one in Theorem~\ref{t:Kron-Kostka-rows}.

\begin{proof}
In notation the proof of
Theorem~\ref{t:Kron-Kostka-rows}, we have \ts $G(\al,\be,\ga)=G(\ga,\om)$.
On the other hand, \ts $g(\la,\mu,\nu) \le G(\al,\be,\ga)$ \ts
by Theorem~\ref{t:Kron-3dim-CT}.  The result follows.
\end{proof}

\medskip

\begin{cor}\label{c:Kron-3dim-rows-asy}
Let $\la,\mu,\nu\vdash n$ such that $\ell(\la)=\ell$, $\ell(\mu) = m$,
and $\ell(\nu)=r$.    Suppose $\ts \ell \ts m \ts r \ts = o(n)$ \ts as \ts $n \to \infty$.
Then:
$$g(\la,\mu,\nu) \, \le \, \exp \. \bigl[\ell \ts m \ts r \ts \log n \.
+ \. O(\ell \ts m \ts r)\bigr]\..
$$
\end{cor}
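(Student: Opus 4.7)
The plan is to take logarithms of the explicit upper bound in Theorem~\ref{t:Kron-3dim-rows} and expand each factor asymptotically under the hypothesis $\ell m r = o(n)$. Write $t = \ell m r$ for brevity, so the theorem gives
$$\log g(\la,\mu,\nu) \, \le \, n \log\!\left(1+\frac{t}{n}\right) \, + \, t \log\!\left(1+\frac{n}{t}\right).$$

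First I would handle the first term. Since $t/n \to 0$, the Taylor expansion $\log(1+x) = x + O(x^2)$ as $x \to 0$ yields
$$n \log\!\left(1+\frac{t}{n}\right) \, = \, t \. + \. O\!\left(\frac{t^2}{n}\right) \, = \, t \. + \. o(t) \, = \, O(t).$$

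Next I would handle the second term, where $n/t \to \infty$. Writing $1 + n/t = (n/t)(1 + t/n)$, I would split the logarithm as
$$t \log\!\left(1+\frac{n}{t}\right) \, = \, t\bigl[\log n \. - \. \log t \. + \. \log(1+t/n)\bigr] \, \le \, t \log n \. + \. O(t),$$
using $\log t \ge 0$ in the right direction (we want an upper bound, and $-\log t \le 0$) and $\log(1+t/n) = O(t/n) = o(1)$.

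Combining the two estimates gives $\log g(\la,\mu,\nu) \le t \log n + O(t)$, which is exactly the claim. No step here is really an obstacle: the only mild care needed is to keep track of signs when bounding $-t \log t$, noting that since we want an upper bound and $t \ge 1$, the term $-t\log t \le 0$ can simply be discarded. The whole argument is a two-line asymptotic expansion of Theorem~\ref{t:Kron-3dim-rows}.
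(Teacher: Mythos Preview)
Your argument is correct and is exactly the intended derivation: the paper states Corollary~\ref{c:Kron-3dim-rows-asy} immediately after Theorem~\ref{t:Kron-3dim-rows} without a separate proof, and the two-line asymptotic expansion you give (with the observation that $-t\log t\le 0$ may be dropped for an upper bound) is precisely how one reads it off.
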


\medskip

\begin{rem}{\rm
In summary, we obtain stronger general bounds in
Theorem~\ref{t:Kron-3dim-rows} using contingency tables than that in
Theorem~\ref{t:Kron-Kostka-rows} using Kostka numbers. At the same
time, more careful estimates on Kostka numbers for $r=O(1)$ in
Example~\ref{ex:motiv-Kostka} give the same bounds than that in
the Example~\ref{ex:motiv-3dim} via contingency tables.  It is
unclear if upper bounds on Kostka numbers can be improved to
beat bounds on contingency tables in full generality.
}\end{rem}

\bigskip

\section{Bounds for the reduced Kronecker coefficients} \label{sec:Reduced}

The \emph{reduced Kronecker coefficients} were introduced by
Murnaghan in 1938 as the stable limit of \emph{Kronecker coefficients},
when a long first row is added:
\begin{equation}\label{eq:red-def}
\rg(\al,\be,\ga) \. := \. \lim_{n\to \infty} \. g\bigl(\al[n],\be[n],\ga[n]\bigr),
\end{equation}
where 
$$
\al[n]:= (n-|\al|,\al_1,\al_2,\ldots), \quad \text{for all} \quad n\ge |\al|+\al_1\ts,
$$
see~\cite{M1,M2}.
They generalize the classical \emph{Littlewood--Richardson} (LR--) \emph{coefficients}:
$$
\rg(\al,\be,\ga) \, = \, c^\al_{\be\ga} \quad \text{for} \quad |\al|\. = \. |\be| \ts +\ts |\ga|\ts,
$$
see~\cite{Lit}.  As such, they occupy the middle ground between the Kronecker
and the LR--coefficients.  We apply the results above to give an upper bound for
the reduced Kronecker coefficients with few rows. Note that only other upper bound
we know for $\rg(\al,\be,\ga)$ is the upper bound for the maximal value given
in~\cite{PP3}.

\smallskip

\begin{thm}\label{t:Kron-red-rows}
Let $\al\vdash a$, $\be\vdash b$, $\ga\vdash c$, such that $\ell(\al)=\ell$,
$\ell(\be) = m$, and $\ell(\ga)=r$. Denote $N:=a+b+c$.  Then:
$$\rg(\al,\be,\ga) \, \le \, \sum_{n=0, \, n=N\.\text{\rm mod\. 2}}^{\min\{a,b,c\}} \,
\rE(\ell m r,\ts n) \cdot \rE\left(\ell m, v-c\right)\cdot
\rE\bigl(\ell r, v-b)\cdot \rE(m r, v-a)\ts,
$$
where \. 
$$\rE(s,w) \, := \, \left(1+\frac{s}{w}\right)^w
\left(1+\frac{w}{s}\right)^{s}   \quad \text{for} \ \ \. w \. > \. 0\ts,
$$
$$\rE(s,0) \, := \, 1\ts,  
\quad \text{and} \quad
v\. := \. \frac{1}{2} \bigl(N-3n\bigr)\ts.
$$
\end{thm}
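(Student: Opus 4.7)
The plan is to combine the Briand--Rosas identity from~\cite{BR} with the bounds on Kronecker and contingency-table counts established earlier in the paper. The Briand--Rosas identity expresses $\rg(\al,\be,\ga)$ as a sum over an auxiliary non-negative integer parameter $n$, with the parity restriction $n\equiv N\pmod 2$ (needed so that $v=(N-3n)/2$ is a non-negative integer), and over internal partitions $\la,\mu,\nu\vdash n$, of a product of a ``core'' Kronecker coefficient $g(\la,\mu,\nu)$ with three Littlewood--Richardson type factors; each LR factor relates a pair of the outer partitions in $\{\al,\be,\ga\}$ through interior partitions whose total sizes are $v-c$, $v-b$ and $v-a$ respectively. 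Matching the sizes $a=|\al|$, $b=|\be|$, $c=|\ga|$ with the decomposition forces the range $0\le n\le \min\{a,b,c\}$.

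The first step is to apply Theorem~\ref{t:Kron-3dim-rows} to the core Kronecker factor: uniformly over $\la,\mu,\nu\vdash n$ with $\ell(\la)\le\ell$, $\ell(\mu)\le m$, $\ell(\nu)\le r$, we obtain $g(\la,\mu,\nu)\le \rE(\ell m r,n)$. Since this is independent of the specific $\la,\mu,\nu$, it factors out of the inner sum.

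The second step is to control the three LR-type factors. Each LR coefficient is dominated by a count of $2$-dimensional contingency tables of the appropriate dimensions, via the standard injection from LR tableaux to the $2$-dimensional encoding (cf.\ Lemma~\ref{l:Kostka-CT}). After summing over the internal partitions, the three contributions reduce to counts of $2$-dimensional contingency tables of sizes $\ell\times m$, $\ell\times r$ and $m\times r$ with total sums $v-c$, $v-b$ and $v-a$ respectively, the dimensions being inherited from the length constraints $\ell(\al)=\ell$, $\ell(\be)=m$, $\ell(\ga)=r$. Applying Theorem~\ref{t:barv-2dim} together with the majorisation Lemma~\ref{l:G-maj}, each count is bounded by the value of the function $\rE$ at the uniform margin of the prescribed dimension and total sum, producing the factors $\rE(\ell m,v-c)$, $\rE(\ell r,v-b)$, $\rE(m r,v-a)$. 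The convention $\rE(s,0)=1$ handles boundary cases where a marginal sum vanishes.

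Summing over the permissible values of $n$ then yields the claimed bound. The main obstacle I expect is the bookkeeping step of showing that the inner sum of products of three LR factors (over all compatible interior partitions) decouples into three independent $2$-dimensional contingency table sums, one for each pair $(\al,\be), (\al,\ga), (\be,\ga)$. Once this decoupling is established, Lemma~\ref{l:G-maj} combined with Theorem~\ref{t:barv-2dim} handles each piece in exactly the same way as in the proof of Theorem~\ref{t:Kron-3dim-rows}; verifying that the sizes produced by Briand--Rosas align precisely with $v-a$, $v-b$, $v-c$ (rather than with some shifted variant) is the part that must be done with care.
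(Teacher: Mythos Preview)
Your outline diverges from the paper's route, and the divergence matters precisely at the step you flag as the main obstacle.

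The paper does \emph{not} extract from the Briand--Rosas identity a formula for $\rg(\al,\be,\ga)$ involving a core Kronecker coefficient $g(\la,\mu,\nu)$ together with Littlewood--Richardson factors. Instead it takes the coefficient of the \emph{monomial} $\bx^{\al}\by^{\be}\bz^{\ga}$ on both sides (Lemma~\ref{l:CT-reduced}). On the left this gives the inequality $\rg(\al,\be,\ga)\le \rR(\al,\be,\ga)$ via $K(\al,\al)=1$; on the right the four products
\[
\prod_{i,j,k}\frac{1}{1-x_iy_jz_k}\,,\qquad \prod_{i,j}\frac{1}{1-x_iy_j}\,,\qquad \prod_{i,k}\frac{1}{1-x_iz_k}\,,\qquad \prod_{j,k}\frac{1}{1-y_jz_k}
\]
contribute, \emph{independently}, one $3$-dimensional and three $2$-dimensional contingency-table counts. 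The decoupling you worry about is therefore automatic: it falls out of the monomial expansion because each factor supplies its own table count with its own margins. The only residual coupling is the additive splitting $\al=\la+\pi+\si$, $\be=\mu+\rho+\eta$, $\ga=\nu+\tau+\zeta$, which is exactly what pins down the sizes $n$, $v-c$, $v-b$, $v-a$. One then applies Theorems~\ref{t:barv-3dim} and~\ref{t:barv-2dim} with Lemma~\ref{l:G-maj}, just as in the proof of Theorem~\ref{t:Kron-3dim-rows}.

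If instead you take Schur coefficients as you propose, Schur's theorem on the triple product and the Cauchy identity on each double product give
\[
\rg(\al,\be,\ga)\;=\;\sum_{\la,\mu,\nu,\pi,\si,\eta}\, g(\la,\mu,\nu)\; c^{\al}_{\la\,\pi\,\si}\; c^{\be}_{\mu\,\pi\,\eta}\; c^{\ga}_{\nu\,\si\,\eta}\,.
\]
Each multi-LR factor here involves a \emph{single} outer partition; the three factors are entangled because each of $\pi,\si,\eta$ appears in \emph{two} of them. So they do not ``relate a pair of the outer partitions'' as you describe, and Lemma~\ref{l:Kostka-CT} (which bounds Kostka numbers, not LR coefficients) does not furnish the injection you need to turn this coupled sum into three separate $2$-dimensional contingency-table counts. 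The clean way through is the paper's: pass to monomials rather than Schur functions, so that the contingency tables appear directly and no decoupling argument is required.
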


\smallskip

\nin
The proof follows the previous pattern.  We begin with
the following combinatorial lemma.

\begin{lemma} \label{l:CT-reduced}
$$g(\al,\be,\ga) \. \le \. \rR(\al,\be,\ga)\ts,
$$
where
$$
\rR(\al,\be,\ga) \, = \, \sum_{\Phi(\al,\be,\ga)} \, \rT(\la,\mu,\nu) \,
\rT(\pi,\rho)\, \rT(\si,\tau) \, \rT(\eta,\zeta)\ts,
$$
and
$$\Phi(\al,\be,\ga) \ts := \ts \bigl\{(\la,\mu,\nu,\pi,\rho,\si,\tau,\eta,\zeta)\in\cP^9,
(\al,\be,\ga) = (\la,\mu,\nu)+(\pi,\rho,\emp)+(\si,\emp,\tau) +(\emp,\eta,\zeta)
\bigr\}.
$$
\end{lemma}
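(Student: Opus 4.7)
The plan is to derive this bound from the Briand--Rosas identity~\cite{BR} (advertised at the start of $\S$\ref{sec:Reduced}), which re-expresses the reduced Kronecker coefficient as a non-negative sum of products of an ordinary Kronecker coefficient with pairwise ``overlap'' factors, followed by termwise application of the contingency-table bounds already developed. In view of Theorem~\ref{t:Kron-red-rows}, the lemma's $g(\al,\be,\ga)$ should be read as the reduced coefficient $\rg(\al,\be,\ga)$.

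The first step is to apply Briand--Rosas to obtain an expansion of the form
$$\rg(\al,\be,\ga) \, = \, \sum_{\Phi(\al,\be,\ga)} \, g(\la,\mu,\nu) \. L_1(\pi,\rho) \. L_2(\si,\tau) \. L_3(\eta,\zeta),
$$
where each $L_i$ is a non-negative combinatorial count pairing the two overlap partitions (typically an LR coefficient or a sum of such over an auxiliary shape). The defining condition $(\al,\be,\ga) = (\la,\mu,\nu) + (\pi,\rho,\emp) + (\si,\emp,\tau) + (\emp,\eta,\zeta)$ of $\Phi(\al,\be,\ga)$ encodes an inclusion--exclusion decomposition: $(\la,\mu,\nu)$, of common size $n$, is the triple-overlap core carrying the ordinary Kronecker coefficient, while $(\pi,\rho)$, $(\si,\tau)$, $(\eta,\zeta)$ record the pairwise overlaps between $(\al,\be)$, $(\al,\ga)$, $(\be,\ga)$, respectively.

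The second step bounds each factor termwise. Theorem~\ref{t:Kron-3dim-CT} yields $g(\la,\mu,\nu) \le \rT(\la,\mu,\nu)$. For each pair factor, I would use LR~$\le$~Kostka (every LR tableau is semistandard with prescribed content) together with Lemma~\ref{l:Kostka-CT}, and, if needed, the RSK identity $\rT(\pi,\rho) = \sum_\xi K(\xi,\pi) K(\xi,\rho)$ to collapse any summation over an auxiliary shape $\xi$, producing $L_i(\pi,\rho) \le \rT(\pi,\rho)$. Multiplying these bounds factor by factor and summing over $\Phi(\al,\be,\ga)$ yields the claimed inequality, all summands being non-negative.

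The main obstacle is extracting the precise Briand--Rosas expansion and verifying that its LR-type factors really do separate into three independent pair contributions indexed exactly by $\Phi(\al,\be,\ga)$, including matching the sizes dictated by $\Phi$ to those forced by each non-negative factor. Once the identity is in this separated form, the remaining contingency-table estimates apply termwise and the inequality follows by routine bookkeeping.
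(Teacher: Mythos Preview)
Your instinct to invoke Briand--Rosas is right, and you correctly read $g$ here as the reduced coefficient~$\rg$. But the paper takes a shorter route that sidesteps the very obstacle you flag. It does not first extract a Schur-coefficient identity \ts $\rg = \sum g\cdot L_1\cdot L_2\cdot L_3$ \ts and then bound the factors. Instead it uses the generating-function form of~\cite{BR},
\[
\sum_{\al,\be,\ga} \rg(\al,\be,\ga)\, s_\al(\bx)\, s_\be(\by)\, s_\ga(\bz)
\ = \ \prod_{i,j,k}\frac{1}{1-x_iy_jz_k}\ \prod_{i,j}\frac{1}{1-x_iy_j}\ \prod_{i,k}\frac{1}{1-x_iz_k}\ \prod_{j,k}\frac{1}{1-y_jz_k}\,,
\]
and extracts the \emph{monomial} coefficient of $\bx^\al\by^\be\bz^\ga$ on both sides, exactly as in the proof of Theorem~\ref{t:Kron-3dim-CT}. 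On the right, the four products are generating functions for one $3$-dimensional and three $2$-dimensional contingency tables, so this coefficient is $\rR(\al,\be,\ga)$ by definition, with $\Phi$ simply recording how $\al,\be,\ga$ are distributed among the four factors. On the left, expanding each $s_\al$ into monomials via Kostka numbers gives a nonnegative sum whose diagonal term is $\rg(\al,\be,\ga)$, and the inequality follows.

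Your two-step plan hits a genuine difficulty at the first step. Expanding the right side in Schur functions (Schur's theorem on the triple product, Cauchy on each double product) yields
\[
\rg(\al,\be,\ga)\ = \sum_{\la,\mu,\nu,\,\xi,\psi,\phi} g(\la,\mu,\nu)\; c\bigl(\al\ts|\ts\la,\xi,\psi\bigr)\; c\bigl(\be\ts|\ts\mu,\xi,\phi\bigr)\; c\bigl(\ga\ts|\ts\nu,\psi,\phi\bigr),
\]
with \emph{single} overlap partitions $\xi,\psi,\phi$, each of which appears in two of the three multi-LR factors. These factors therefore do not decouple into independent pair contributions $L_1(\pi,\rho)\,L_2(\si,\tau)\,L_3(\eta,\zeta)$ indexed by $\Phi(\al,\be,\ga)$: the obstacle you anticipate is structural, not bookkeeping. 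The paper avoids it entirely by working in the monomial basis, where the product form of the generating function already delivers the factorized count~$\rR$ for free.
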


\begin{proof}
The result follows from the identity given recently in~\cite{BR}, and by taking the coefficients
in \ts $\bx^\al\. \by^\be\. \bz^\ga$ \ts on both sides:
$$\aligned
& \sum_{(\al,\be,\ga)\in \cP^3} \. \rg(\al,\be,\ga) \, s_\al(\bx) \. s_\be(\by) \. s_\ga(\bz) \\
& \qquad = \, \left[\prod_{i,j,k=1}^\infty \. \frac{1}{1-x_iy_jz_k}\right] \, \left[\prod_{i,j=1}^\infty \. \frac{1}{1-x_iy_j} \right]
\, \left[\prod_{i,k=1}^\infty \. \frac{1}{1-x_iz_k}\right] \, \left[\prod_{j,k=1}^\infty \. \frac{1}{1-y_jz_k}\right] \\
& \qquad = \, \sum_{(\al,\be,\ga)\in \cP^3} \. \rR(\al,\be,\ga) \, m_\al(\bx) \. m_\be(\by) \ts m_\ga(\bz) \,.
\endaligned
$$
Here the nine partitions in the definition of \ts $\Phi(\al,\be,\ga)$ \ts
come from a combinatorial interpretation of one $3$-dimensional and three $2$-dimensional
contingency tables.  \end{proof}

\medskip

\begin{proof}[Proof of Theorem~\ref{t:Kron-red-rows}]
In notation of Lemma~\ref{l:CT-reduced}, denote \ts $n:=|\la|=|\mu|=|\nu|$.
Note that \ts $n \le a,b,c,$ \ts and has the same parity as $N=a+b+c$.
Observe that $n$ determines the remaining six partitions sizes: \ts
$\pi,\rho \vdash v-c$, \ts $\si,\tau\vdash v-b$, and
\ts $\eta,\zeta\vdash v-a$.
By assumption as in the theorem,
we have an upper bound on the number of rows of all nine partitions:
\ts $\ell(\la),\ell(\pi),\ell(\si)\le \ell$, \ts
$\ell(\mu), \ell(\rho),\ell(\eta)\le m$,  and
\ts $\ell(\nu),\ell(\tau), \ell(\zeta)\le r$.  In notation of the theorem,
from the proof in the previous section we obtain:
$$\rT(\la,\mu,\nu) \. \le \. G\bigl((n/\ell)^\ell,(n/m)^m,(n/r)^r\bigr)
\. = \. \rE(\ell m r,\ts n)\ts.
$$
The same holds for the remaining pairs of partitions:
$$
\aligned
\rT(\pi,\rho) \. & \le \. G\bigl(((v-c)/\ell)^\ell,((v-c)/m)^m\bigr)
\. = \. \rE(\ell m ,\ts v-c)\ts,\\
\rT(\si,\tau) \. & \le \. G\bigl(((v-b)/\ell)^\ell,((v-b)/r)^r\bigr)
\. = \. \rE(\ell r ,\ts v-c)\ts,\\
\rT(\eta,\zeta) \. & \le \. G\bigl(((v-a)/m)^m,((v-a)/r)^r\bigr)
\. = \. \rE(m r,\ts v-c)\ts.
\endaligned
$$
Note that in the case of empty partitions of zero, we have a unique
zero contingency table, giving $\rT(\cdot) = \rE(\cdot,0)=1$, by definition 
in the statement of the theorem. 
Putting all these bounds into the main inequality in
Lemma~\ref{l:CT-reduced}, implies the result.
\end{proof}

\smallskip

\begin{ex} {\rm
Suppose $a=N/6$, $b=N/3$ and $c=N/2$, with
$\ell=m=r=N^{1/3}$. Then the sum over $n$ in Theorem~\ref{t:Kron-red-rows}
maximizes at $n=a=N/6$. Thus $v=N/2$, and
we have:
$$\aligned
\rg(\al,\be,\ga) \, &\le \, (n/2) \cdot \rE(N,N/6) \cdot \rE\bigl(N^{2/3},0\bigr)
\cdot  \rE\bigl(N^{2/3},N/6\bigr)\cdot \rE\bigl(N^{2/3},N/3\bigr) \\
& \le  \,
\left(1+\frac{1}{6}\right)^{N}  \left(1+6\right)^{N/6} \,\cdot\, \exp O\bigl(N^{2/3}\bigr) \\
& \le  \, \exp\ts \bigl[c\ts N \. + \. O\bigl(N^{2/3}\bigr)\bigr].
\endaligned
$$
where \ts $c=(\log 7/6) + (\log 7)/6 \approx 0.4785$.

Note that in the limit~$(\circ)$ it suffices to take \ts
$n  \ts \ge \ts |\al| \ts + \ts |\be| \ts + \ts |\ga|$,
see~\cite{BOR,Val}.  Since the number of rows increases only by~1
under the limit, Theorem~\ref{t:Kron-3dim-rows} still gives
an exponential bound for \ts $\rg(\al,\be,\ga)$, but with a
much larger constant.
}
\end{ex}

\bigskip

\section{Multi-LR approach} \label{sec:multi-LR}

Let $K_{\la,\mu}^{\<-1\>}$ be the inverse of the Kostka matrix.
Recall that $K_{\la,\mu}^{\<-1\>}=0$ unless $\la \rdom \mu$.

\begin{lemma} For all $\la,\mu\vdash n$, we have:
$$\bigl|K_{\la,\mu}^{\<-1\>}\bigr| \. \le \. \ell(\mu)!
$$
\end{lemma}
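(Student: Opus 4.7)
The plan is to apply the classical Jacobi--Trudi identity to express $s_\la$ as a signed sum of complete homogeneous symmetric functions indexed by permutations, and then use the dominance restriction on the support of $K^{\<-1\>}$ to upgrade the resulting bound from $\ell(\la)!$ to the desired $\ell(\mu)!$.

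First, I will recall the Jacobi--Trudi identity
$$
s_\la \. = \. \det\bigl(h_{\la_i - i + j}\bigr)_{1 \le i, j \le \ell(\la)}\ts,
$$
with the convention $h_{-k} = 0$ for $k > 0$. Expanding the determinant via the Leibniz formula and comparing with $s_\la = \sum_\mu K_{\la,\mu}^{\<-1\>} \ts h_\mu$ gives
$$
K_{\la,\mu}^{\<-1\>} \, = \, \sum_{\sigma \in S_{\ell(\la)}\ts :\ts \nu(\sigma) = \mu} \, \operatorname{sgn}(\sigma)\ts,
$$
where $\nu(\sigma)$ is the partition obtained by sorting the sequence $(\la_i - i + \sigma(i))_{i=1}^{\ell(\la)}$ in weakly decreasing order (and discarding zero entries), with the convention that a term vanishes whenever any exponent is negative. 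This immediately yields the preliminary bound $|K_{\la,\mu}^{\<-1\>}| \le \ell(\la)!$.

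Second, I will upgrade the factorial from $\ell(\la)$ to $\ell(\mu)$. Since $K_{\la,\mu}^{\<-1\>} = 0$ unless $\la \rdom \mu$, I may assume $\la \rdom \mu$. Setting $m := \ell(\mu)$, the dominance inequality at level~$m$ reads $\la_1 + \ldots + \la_m \ge \mu_1 + \ldots + \mu_m = n$. Since $|\la| = n$, this forces $\la_{m+1} = \la_{m+2} = \ldots = 0$, i.e.\ $\ell(\la) \le \ell(\mu)$. Combining with the Jacobi--Trudi bound gives $|K_{\la,\mu}^{\<-1\>}| \le \ell(\la)! \le \ell(\mu)!$, as desired.

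There is no serious technical obstacle in this plan. The only point worth highlighting is that Jacobi--Trudi natively produces the slightly weaker bound $\ell(\la)!$, and one must invoke the support condition $\la \rdom \mu$ to promote it to $\ell(\mu)!$; this is precisely the reason the lemma is stated with $\ell(\mu)$ on the right-hand side rather than $\ell(\la)$.
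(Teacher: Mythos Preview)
Your proof is correct and is essentially the paper's second proof: expand $s_\la$ via Jacobi--Trudi to bound $\bigl|K_{\la,\mu}^{\<-1\>}\bigr|$ by $\ell(\la)!$, then use the dominance constraint $\la\rdom\mu$ to conclude $\ell(\la)\le\ell(\mu)$. The paper also records an alternative first proof via the E\u{g}ecio\u{g}lu--Remmel signed ribbon tableau interpretation, but your argument matches the second one almost verbatim.
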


\begin{proof}[First proof]  Let $\ell=\ell(\la)$, $m=\ell(\mu)$.
By~\cite[Theorem~1]{ER}, $K_{\la,\mu}^{\<-1\>}$ is a signed
sum of the ``special'' ribbon (rim hook) tableaux of shape $\mu$ with hook length~$\la_i$.
By the idefinition in~\cite{ER} and trivial induction, these are uniquely determined
by positions of starting squares of ribbons along the first column.  Thus
the number of ribbon tableaux is at most $m(m-1)\cdots (m-\ell+1)\le m!$, as desired.
\end{proof}

In special cases, the combinatorial interpretation in~\cite{ER}
can perhaps lead to further improvements of the upper bound.
For completeness, we enclose a short proof which avoids~\cite{ER}.

\begin{proof}[Second proof]
Note that these are the coefficients in the Schur function expansion \ts
$$s_\la \. = \, \sum_\mu \. K_{\la,\mu}^{\<-1\>} \ts h_\mu\.,
$$
see e.g.~\cite{Mac,EC2}.
On the other hand, by the Jacobi--Trudi identity we have:
$$
s_\la \. = \. \det \Bigl[ h_{\la_i -i +j }\Bigr]_{i,j=1}^{\ell(\la)}\ts.
$$
This gives:
$$\sum_{\mu\vdash n} \. \bigl|K_{\la,\mu}^{\<-1\>}\bigr| \. = \. \ell(\la)!
$$
By Theorem~\ref{t:Kostka-dom}, we have \ts $\ell(\la)\leq \ell(\mu)$,
and the inequality follows.
\end{proof}

\begin{thm}[{\cite{Val}}] Let $\la,\mu,\nu\vdash n$ and $\ell(\nu)=r$.
$$
g(\la,\mu,\nu) \, = \, \sum_{\pi\rdom \nu} \. K_{\pi,\nu}^{\<-1\>} \, \LR(\la,\mu\ts|\ts\pi),
$$
where
$$
\LR(\la,\mu\ts|\ts\pi) \, = \, \sum_{\rho^{(1)}\vdash \pi_1,\ldots,\rho^{(s)}\vdash \pi_s}
c\bigl(\la \ts|\ts \rho^{(1)},\ldots,\rho^{(s)}\bigr)\cdot c\bigl(\mu \ts|\ts \rho^{(1)},\ldots,\rho^{(s)}\bigr),
$$
$s=\ell(\pi)\le r$, and $c\bigl(\la \ts|\ts \rho^{(1)},\ldots,\rho^{(s)}\bigr)$ is a multi-LR coefficient.
\end{thm}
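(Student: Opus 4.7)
The plan is to derive the identity purely from character theory on $S_n$, using the fact that the permutation characters $\phi^\pi$ form a basis of the character ring that is dual (up to Kostka numbers) to the irreducible basis $\{\chi^\nu\}$. Starting from the definition, I would write
$$
g(\la,\mu,\nu) \. = \. \<\chi^\la\chi^\mu, \chi^\nu\>_{S_n}\ts,
$$
and then invert the relation \ts $\phi^\pi = \sum_\nu K(\nu,\pi)\ts\chi^\nu$ \ts stated in~$\S$\ref{ss:Basic-reps} to obtain
$$
\chi^\nu \. = \, \sum_{\pi\ts \rdom\ts \nu} \. K_{\pi,\nu}^{\<-1\>} \ts \phi^\pi\ts,
$$
where the range of summation reflects the vanishing $K_{\pi,\nu}^{\<-1\>}=0$ unless $\pi \rdom \nu$ recalled just before the theorem. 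Substituting this into the inner product and using linearity reduces the problem to computing \ts $\<\chi^\la\chi^\mu,\ts \phi^\pi\>_{S_n}$ \ts for each \ts $\pi \rdom \nu$.

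Next I would compute this latter inner product by Frobenius reciprocity. Since \ts $\phi^\pi = \mathrm{Ind}_{S(\pi)}^{S_n} 1$ \ts with \ts $S(\pi) = S(\pi_1)\times\cdots\times S(\pi_s)$, reciprocity gives
$$
\<\chi^\la\chi^\mu, \phi^\pi\>_{S_n} \. = \. \dim\bigl(\SS^\la \otimes \SS^\mu\bigr)^{S(\pi)}\ts.
$$
Restricting each factor to the Young subgroup is exactly where the multi-LR coefficients appear: by iterating the ordinary LR rule (equivalently, by branching the Schur function \ts $s_\la$ \ts in a product of alphabets of sizes $\pi_1,\ldots,\pi_s$), one has
$$
\SS^\la\big|_{S(\pi)} \. = \, \bigoplus_{\rho^{(1)}\vdash\pi_1,\ldots,\rho^{(s)}\vdash\pi_s}
c\bigl(\la \ts|\ts \rho^{(1)},\ldots,\rho^{(s)}\bigr)\. \SS^{\rho^{(1)}}\otimes\cdots\otimes \SS^{\rho^{(s)}}\ts,
$$
and similarly for \ts $\SS^\mu$. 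Taking \ts $S(\pi)$-invariants in the tensor product picks out exactly the diagonal pairs (since $\SS^{\rho}\otimes \SS^{\sigma}$ contains a trivial summand iff $\rho=\sigma$, and then exactly one), yielding
$$
\<\chi^\la\chi^\mu,\phi^\pi\>_{S_n} \. = \. \LR(\la,\mu\ts|\ts\pi)\ts.
$$

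Combining these two steps gives the claimed identity. The bookkeeping step I expect to be the most delicate is justifying the restriction formula with the multi-LR coefficients, since one must iterate the two-step branching \ts $S_n \downarrow S_k\times S_{n-k}$ \ts carefully and check that the resulting multiplicity agrees with Vallejo's normalization of \ts $c(\la\ts|\ts\rho^{(1)},\ldots,\rho^{(s)})$; the range \ts $\pi\rdom\nu$ and the fact that \ts $\ell(\pi)\le r$ \ts when \ts $\pi\rdom\nu$ \ts then drop out from the support condition on inverse Kostka numbers noted above.
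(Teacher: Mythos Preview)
The paper does not prove this statement: it is quoted as a theorem of Vallejo~\cite{Val} and used as a black box in~$\S$\ref{sec:multi-LR}. So there is no ``paper's own proof'' to compare against.

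That said, your argument is correct and is essentially the standard derivation. The two steps are exactly the right ones: inverting \ts $\phi^\pi=\sum_\la K(\la,\pi)\ts\chi^\la$ \ts to write $\chi^\nu$ in the $\{\phi^\pi\}$ basis (with the triangularity $K^{\<-1\>}_{\pi,\nu}=0$ unless $\pi\rdom\nu$, as recalled just before the theorem), and then computing $\<\chi^\la\chi^\mu,\phi^\pi\>$ by Frobenius reciprocity. For the latter, your chain
$$
\bigl\<\chi^\la\chi^\mu,\ts\mathrm{Ind}_{S(\pi)}^{S_n}1\bigr\>_{S_n}
\. = \.
\bigl\<\mathrm{Res}_{S(\pi)}\chi^\la\cdot\mathrm{Res}_{S(\pi)}\chi^\mu,\ts 1\bigr\>_{S(\pi)}
\. = \.
\sum_{\rho^{(1)},\ldots,\rho^{(s)}} c\bigl(\la\ts|\ts\rho^{(\cdot)}\bigr)\ts c\bigl(\mu\ts|\ts\rho^{(\cdot)}\bigr)
$$
is exactly right, since over each factor $S(\pi_i)$ one has $\<\chi^{\rho}\chi^{\sigma},1\>_{S(\pi_i)}=\de_{\rho\sigma}$, and iterating the branching $S_n\downarrow S_k\times S_{n-k}$ produces precisely Vallejo's multi-LR coefficient $c(\la\ts|\ts\rho^{(1)},\ldots,\rho^{(s)})$. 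The observation $\ell(\pi)\le r$ when $\pi\rdom\nu$ and $\ell(\nu)=r$ is immediate from dominance. Nothing is missing.
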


\begin{thm}[{\cite[Cor.~4.12]{PPY}}]
Let $\la\vdash n$, $|\mu|+|\nu|=n$, $\ell=\ell(\la)$. Then
$$c^{\la}_{\mu\ts\nu} \. \le \. (\la_1+\ell)^{\ell^2/2}
$$
\end{thm}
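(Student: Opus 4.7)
The plan is to realize $c^\la_{\mu\nu}$ as a count of lattice points in an affine slice of dimension at most $\ell^2/2$ whose coordinates are bounded by $\la_1$, and then apply a crude projection bound. By the Littlewood--Richardson rule, $c^\la_{\mu\nu}$ equals the number of LR tableaux $T$ of skew shape $\la/\mu$ and weight $\nu$, i.e.\ SSYT whose reverse reading word is Yamanouchi. We may assume $c^\la_{\mu\nu}>0$, which forces $\ell(\nu)\le \ell$, so every entry of $T$ lies in $[\ell]$; moreover, as a standard consequence of the Yamanouchi condition, the entries of row~$i$ of any LR tableau are at most~$i$.

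Encode each $T$ by the $\ell\times\ell$ matrix $M(T)=(m_{ij})$, where $m_{ij}$ is the number of $j$'s in row~$i$ of $T$. The map $T\mapsto M(T)$ is injective, since each row is reconstructed by listing its entries in weakly increasing order with the prescribed multiplicities. The row-entry bound gives $m_{ij}=0$ whenever $j>i$, so $M(T)$ is supported on the $\binom{\ell+1}{2}$ lower-triangular positions, with $0\le m_{ij}\le \la_i-\mu_i\le \la_1$. The row- and column-sum relations
$$
\sum_{j\le i}\ts m_{ij} \. = \. \la_i-\mu_i\ts, \qquad \sum_{i\ge j}\ts m_{ij} \. = \. \nu_j
$$
form a linear system of $2\ell$ equations of rank $2\ell-1$, so the matrices $M(T)$ lie in an affine subspace of dimension $\binom{\ell+1}{2}-(2\ell-1)=\binom{\ell-1}{2}$.

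Take $\{m_{ij}:2\le j<i\le \ell\}$ as the free parameters; a direct check shows that the first-column entries $m_{i,1}$ and diagonal entries $m_{ii}$ are uniquely determined from them by the row- and column-sum constraints, so the projection onto these $\binom{\ell-1}{2}$ coordinates is injective on the affine hull. Each free coordinate is an integer in $[0,\la_1]$, yielding
$$
c^\la_{\mu\nu} \. \le \. (\la_1+1)^{\binom{\ell-1}{2}} \. \le \. (\la_1+\ell)^{\ell^2/2}\ts.
$$
The only nontrivial input is the bound ``entries of row~$i$ are at most~$i$'', a standard but nonobvious consequence of the Yamanouchi condition; everything else is linear algebra and a routine lattice-point projection bound. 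Verifying this row-bound (and hence the lower-triangular shape of $M(T)$) is the main obstacle, while choosing an explicit basis of free coordinates and checking injectivity of the projection are technical but straightforward.
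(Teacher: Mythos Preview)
The paper does not prove this statement; it is quoted from \cite[Cor.~4.12]{PPY} and used as a black box in the proof of Theorem~\ref{t:Kron-multi}, so there is no argument here to compare against.

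Your proof is correct. The row bound ``entries in row~$i$ of an LR tableau are at most~$i$'' is indeed the only substantive input, and it follows by induction on~$i$: the rightmost (hence largest) entry $m$ of row~$i$ is read in the reverse reading word immediately after all of rows $1,\dots,i-1$, which by induction contain no letter $m{-}1$ when $m>i$, contradicting the lattice condition. Your dimension count and choice of free coordinates also check out: the diagonal entries $m_{jj}$ for $j\ge 2$ are determined by the column-sum equations, then the first-column entries $m_{i1}$ by the row-sum equations, with the column-$1$ constraint redundant since $|\la/\mu|=|\nu|$. The resulting intermediate bound $(\la_1+1)^{\binom{\ell-1}{2}}$ is in fact slightly sharper than the stated $(\la_1+\ell)^{\ell^2/2}$.
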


\begin{lemma}  In the notation above, let $\ell(\la)=\ell$, $\ell(\pi)=s$. Then:
$$
c\bigl(\la \ts|\ts \rho^{(1)},\ldots,\rho^{(s)}\bigr) \, \le \, p(n)^{s-1} \cdot (\ell+\la_1)^{s\ts \ell^2/2}
$$
\end{lemma}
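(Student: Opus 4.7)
The plan is to unwind the multi-LR coefficient as an iterated product of ordinary Littlewood--Richardson coefficients, bound each factor by the preceding theorem, and bound the number of terms in the iteration by $p(n)$.

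By the definition of the multi-LR coefficient as the structure constant in the iterated Schur product
$$s_{\rho^{(1)}}\, s_{\rho^{(2)}} \cdots s_{\rho^{(s)}} \, = \, \sum_{\la} \. c\bigl(\la \ts|\ts \rho^{(1)},\ldots,\rho^{(s)}\bigr) \, s_\la\,,$$
repeated application of the two-factor LR rule gives
$$c\bigl(\la \ts|\ts \rho^{(1)},\ldots,\rho^{(s)}\bigr) \, = \, \sum_{\sigma^{(1)},\ldots,\sigma^{(s-2)}} \. c^{\sigma^{(1)}}_{\rho^{(1)}\rho^{(2)}} \cdot c^{\sigma^{(2)}}_{\sigma^{(1)}\rho^{(3)}} \cdots c^{\la}_{\sigma^{(s-2)}\rho^{(s)}},$$
where each $\sigma^{(i)}$ is a partition of $|\rho^{(1)}|+\cdots+|\rho^{(i+1)}| \le n$.

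First, I would bound the number of summands. Since each intermediate partition $\sigma^{(i)}$ is a partition of at most $n$, and there are $s-2$ of them, the number of nonzero terms is at most $p(n)^{s-2} \le p(n)^{s-1}$.

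Next, I would bound each LR factor uniformly using the preceding corollary of~\cite{PPY}, which gives $c^\tau_{\mu\nu} \le (\tau_1 + \ell(\tau))^{\ell(\tau)^2/2}$.  The key observation here is that in every nonzero term, the LR rule forces the containment chain $\sigma^{(1)} \subseteq \sigma^{(2)} \subseteq \cdots \subseteq \sigma^{(s-2)} \subseteq \la$, since $c^\tau_{\mu\nu}\neq 0$ implies $\mu\subseteq\tau$ as Young diagrams.  Therefore $\ell(\sigma^{(i)}) \le \ell$ and $\sigma^{(i)}_1 \le \la_1$ for every~$i$, and the PPY bound applied to each of the $s-1$ LR factors gives the uniform estimate
$$c^{\sigma^{(i+1)}}_{\sigma^{(i)} \rho^{(i+2)}} \, \le \, (\la_1 + \ell)^{\ell^2/2}.$$

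Combining the two ingredients yields $p(n)^{s-2}\cdot(\la_1+\ell)^{(s-1)\ell^2/2}$, which is majorized by the claimed bound $p(n)^{s-1}\cdot(\ell+\la_1)^{s\ell^2/2}$.  The only substantive step is verifying the containment chain $\sigma^{(i)} \subseteq \la$, which is precisely what justifies applying a single uniform PPY estimate (with the global parameters $\ell,\la_1$) to every LR factor rather than a recursively weaker bound; the rest is bookkeeping.
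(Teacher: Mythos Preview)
Your proposal is correct and follows essentially the same approach as the paper: expand the multi-LR coefficient over a chain of intermediate shapes contained in~$\la$, bound the number of chains by a power of~$p(n)$, and bound each LR factor uniformly via the PPY estimate using $\sigma^{(i)}\subseteq\la$. The only cosmetic difference is that the paper phrases this combinatorially (``an increasing chain of LR-tableaux of length~$s$''), giving $s$ tableaux and $s-1$ intermediate shapes, whereas you phrase it algebraically as $s-1$ iterated LR products over $s-2$ intermediate shapes---hence your slightly sharper bound $p(n)^{s-2}\cdot(\la_1+\ell)^{(s-1)\ell^2/2}$, which you then relax to the stated one.
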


\begin{proof}
This follows from the definition of multi-LR coefficient in~\cite{Val} as an increasing
chain of LR-tableaux of length~$s$.  The number of such chains is trivially bounded
by $p(n)^{s-1}$,  while the numbers of each LR-tableaux is at most
$(\la_1+\ell)^{\ell^2/2}$ from above.
\end{proof}

\begin{thm} \label{t:Kron-multi}
Let $\ell(\la)=\ell$, $\ell(\mu) = m$, $\ell(\nu)= r$. Then:
$$
g(\la,\mu,\nu) \, \le \, r! \ts \cdot \ts p(n)^{3r-2} \ts \cdot\ts n^{r-1} \ts \cdot\ts
(\ell+\la_1)^{r\ts \ell^2/2} (m+\mu_1)^{r\ts m^2/2}
$$
\end{thm}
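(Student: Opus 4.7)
The plan is to simply combine the three ingredients already assembled: Vallejo's formula, the bound on $|K_{\pi,\nu}^{\<-1\>}|$, and the bound on $c(\la|\rho^{(1)},\ldots,\rho^{(s)})$. Substituting everything into the outer sum and carefully tracking how many terms appear at each level gives the stated estimate.

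\textbf{Step 1: peel off the inverse Kostka weights.} Start from Vallejo's formula
\[ g(\la,\mu,\nu) \, = \, \sum_{\pi \rdom \nu} \. K_{\pi,\nu}^{\<-1\>} \, \LR(\la,\mu\ts|\ts\pi)\ts. \]
Apply the triangle inequality and the lemma $|K_{\pi,\nu}^{\<-1\>}| \le \ell(\nu)! = r!$ to pull out an $r!$ factor uniformly. This reduces the task to bounding $\sum_{\pi\rdom\nu} \LR(\la,\mu\ts|\ts\pi)$.

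\textbf{Step 2: count how many $\pi$ appear.} The key observation here is that $\pi \rdom \nu$ forces $\ell(\pi) \le \ell(\nu) = r$: if $\ell(\pi) > r$, then $\pi_1+\cdots+\pi_r < n = \nu_1+\cdots+\nu_r$, contradicting dominance. Hence $\pi$ ranges over partitions of $n$ with at most $r$ parts, whose number is at most $\binom{n+r-1}{r-1} \le n^{r-1}$ (for $n,r$ large; absorb constants as needed). This yields the $n^{r-1}$ factor.

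\textbf{Step 3: bound a single $\LR(\la,\mu\ts|\ts\pi)$.} Fix $\pi$ with $s:=\ell(\pi) \le r$, and consider
\[ \LR(\la,\mu\ts|\ts\pi) \, = \, \sum_{\rho^{(1)}\vdash\pi_1,\ldots,\rho^{(s)}\vdash\pi_s} \. c\bigl(\la\ts|\ts\rho^{(1)},\ldots,\rho^{(s)}\bigr) \cdot c\bigl(\mu\ts|\ts\rho^{(1)},\ldots,\rho^{(s)}\bigr)\ts. \]
The number of tuples $(\rho^{(1)},\ldots,\rho^{(s)})$ is at most $p(\pi_1)\cdots p(\pi_s) \le p(n)^s$. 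For each tuple, the previous lemma gives
\[ c\bigl(\la\ts|\ts\rho^{(1)},\ldots,\rho^{(s)}\bigr) \, \le \, p(n)^{s-1}(\ell+\la_1)^{s\ell^2/2} \quad \text{and similarly for } \mu. \]
Multiplying and using $s \le r$ (since $p(n) \ge 1$ and the other bases exceed $1$) yields
\[ \LR(\la,\mu\ts|\ts\pi) \, \le \, p(n)^{3r-2} \, (\ell+\la_1)^{r\ell^2/2} \, (m+\mu_1)^{rm^2/2}\ts. \]

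\textbf{Step 4: assemble.} Combining Steps 1--3 and multiplying the $r!$, $n^{r-1}$, and the $\pi$-uniform bound above gives the stated inequality. The only place demanding care is Step 2 --- the observation that $\pi \rdom \nu$ forces $\ell(\pi) \le r$, which is precisely what replaces a wasteful factor of $p(n)$ with the sharper $n^{r-1}$; everything else is straightforward substitution and monotonicity of the bounds in $s$.
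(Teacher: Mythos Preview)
Your proof is correct and follows essentially the same approach as the paper: combine Vallejo's formula with the inverse Kostka bound $|K_{\pi,\nu}^{\<-1\>}|\le r!$, the count $n^{r-1}$ for partitions $\pi\rdom\nu$ (via $\ell(\pi)\le r$), the count $p(n)^s\le p(n)^r$ for the tuples $(\rho^{(1)},\ldots,\rho^{(s)})$, and the multi-LR lemma bound for each factor. Your write-up is in fact more explicit than the paper's, which just says ``combining the lemma and estimates above''; the only cosmetic looseness is the inequality $\binom{n+r-1}{r-1}\le n^{r-1}$ in Step~2, but the paper makes the same informal claim.
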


\begin{proof}
This bound follows by combining the lemma and estimates above.  Indeed, in the
summation for $\LR(\cdot)$, the sum is over at most $p(n)^r$ terms.
On the other hand, there are at most at most $n^{r-1}$ partitions~$\pi$
in the summation for $g(\la,\mu,\nu)$. This follows from $\pi \rdom \nu$,
and the fact that there are at most $n^{r-1}$ partitions~$\pi$ with $\ell(\pi)\le r$.
\end{proof}

\medskip

\begin{cor}\label{c:Kron-multi-LR-asy}
Let $\la,\mu,\nu\vdash n$ such that $\ell(\la)=\ell$, $\ell(\mu) = m$,
and $\ell(\nu)=r$.    Suppose $\ts \ell^2r = o(n)$, \ts $m^2 r = o(n)$ \ts as \ts $n \to \infty$.
Then:
$$\aligned
g(\la,\mu,\nu) \, & \le \, \exp \. \left[\frac12\. \ell^2 \ts r \ts \log (\ell+\la_1) \.
+\. \frac12\.  m^2\ts r \ts \log (m+\mu_1)  \.
+ \. O\bigl(r\ts\sqrt{n}\bigr)\right] \\
\, & \le \, \exp \. \left[\frac{1}{2}\bigl(\ell^2 \ts +\ts m^2\bigr) \ts r \ts \log n \.
+ \. O\bigl(r\ts\sqrt{n}\bigr)\right].
\endaligned
$$
\end{cor}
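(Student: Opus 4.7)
The plan is to take logarithms in the bound from Theorem~\ref{t:Kron-multi} and estimate each of the five factors separately under the hypothesis $\ell^2 r = o(n)$, $m^2 r = o(n)$. The two factors $(\ell+\la_1)^{r\ell^2/2}$ and $(m+\mu_1)^{rm^2/2}$ yield the two displayed main terms exactly, so the entire task reduces to absorbing the remaining three factors $r!$, $p(n)^{3r-2}$ and $n^{r-1}$ into the error $O(r\sqrt{n})$.

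Concretely, I would proceed as follows. First, from the Hardy--Ramanujan asymptotic \ts $\log p(n) \sim \pi\sqrt{2n/3}$, one has $\log p(n) = O(\sqrt{n})$, so
\[
\log p(n)^{3r-2} \, \le \, (3r-2) \ts \log p(n) \, = \, O\bigl(r\ts\sqrt{n}\bigr).
\]
Next, since $\log n = o(\sqrt{n})$, we have
\[
\log n^{r-1} \, = \, (r-1)\log n \, = \, O\bigl(r\ts\sqrt{n}\bigr),
\]
and similarly $\log r! \le r\log r \le r\log n = O(r\ts\sqrt{n})$. Adding these three contributions to \ts $\tfrac12 \ell^2 r \log(\ell+\la_1) + \tfrac12 m^2 r \log(m+\mu_1)$ \ts gives the first asymptotic bound. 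The second inequality then follows at once from the trivial observations $\ell+\la_1 \le n+1$ and $m+\mu_1\le n+1$, since $\log(n+1) = \log n + O(1/n)$ and the $O(1)$ correction is absorbed into $O(r\ts\sqrt{n})$ (using $r^2\ell^2, r^2 m^2 = o(rn) = O(r^2 n)$, which in particular is dominated comfortably).

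There is no real obstacle here: every step is a routine logarithmic estimate once Theorem~\ref{t:Kron-multi} is in hand. The only point that requires a moment of care is verifying that the exponent $3r-2$ on $p(n)$ really does stay controlled for \emph{all} admissible $r$, not just $r = O(1)$; this is why invoking the Hardy--Ramanujan bound $\log p(n) = O(\sqrt n)$ (as opposed to, say, the cruder $\log p(n) = O(\log n \cdot \sqrt{n})$) is what makes the error term land at $O(r\sqrt{n})$ rather than something larger. No other ingredient from the paper is needed beyond Theorem~\ref{t:Kron-multi} itself.
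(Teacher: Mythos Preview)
Your proposal is correct and is exactly the intended argument: the paper states this corollary immediately after Theorem~\ref{t:Kron-multi} without proof, and the derivation is precisely the routine logarithmic estimate you describe. One tiny wording quibble: the parenthetical about ``$r^2\ell^2, r^2 m^2 = o(rn)$'' is not the right bookkeeping for the $\log(n+1)=\log n + O(1/n)$ step --- the actual correction is $O\bigl((\ell^2+m^2)r/n\bigr)=o(1)$, which is already negligible --- but this does not affect the validity of the argument.
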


\begin{rem}{\rm
This bound is \emph{always} asymptotically weaker than that in
Corollary~\ref{c:Kron-3dim-rows-asy} for $\ell \ne m$.  On the other
hand, for \ts $\ell,m=o\bigl(n^{1/4}\bigr)$ \ts the dominant term is
\ts $O\bigl(r\ts\sqrt{n}\bigr)$, which is too crude a
bound for the number of diagram chains.
}\end{rem}

\bigskip

\begin{ex}{\rm
As before, let $n=\ell^3$, $k=\ell^2$,
$\la=\mu=(k^\ell)$, and $\ell\to \infty$.
Let $\nu = (n/r)^r \vdash n$, where \ts $\ell(\nu)=r\le \ell$.
The theorem gives:
$$
g(\la,\mu,\nu)
\, \le \, \exp \ts \bigl[r\ell^2\log \ell \. + \. O(r\ell^{3/2})\bigr] \ts.
$$
This bound gives the same leading term than that in examples~\ref{ex:motiv-Kostka}
and~\ref{ex:motiv-3dim} and has the error term somewhere in between.
}\label{ex:motiv-multi-LR-Kron}
\end{ex}

\bigskip

\begin{rem}{\rm
Note that the bounds in Theorem~\ref{t:Kron-multi} are tight for \ts
$\ell(\la), \ell(\mu)=\Theta(\sqrt{n})$, since \emph{all terms}
in the product become $\exp\Theta(n\log n)$, while we already
know that for \emph{some} such $\la,\mu,\nu$ we get $g(\la,\mu,\nu)$
of the same order:
$$
\max_{\la,\mu,\nu\vdash n} \. g(\la,\mu,\nu) \, \ge \, \sqrt{n!} \. \exp \bigl[-c\ts\sqrt{n}\bigr]
\quad \text{for some} \ \. c>0,
$$
see~\cite[$\S$3]{PPY}.
}\end{rem}

\bigskip

\section{Binary contingency tables approach} \label{sec:3CT-bin}

\subsection{Upper bound}
Let $\la,\mu,\nu\vdash n$.  As before, denote by $\BCT(\la,\mu,\nu)$ and
$\rB(\la,\mu,\nu)$ the set and the number of $3$-dimensional \emph{binary}
contingency tables, respectively.

\begin{thm}[{see~$\S$\ref{ss:fin-rem-history}}]  \label{t:Kron-3dim-CT-binary}
We have: \. $g(\la,\mu,\nu) \ts \le \ts\rB(\la',\mu',\nu')$.
\end{thm}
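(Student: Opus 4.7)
The plan is to establish a ``binary'' analog of Schur's theorem and then repeat the coefficient-extraction argument from the proof of Theorem~\ref{t:Kron-3dim-CT} verbatim. Concretely, I would aim to prove the identity
$$\prod_{i,j,k}\bigl(1+x_iy_jz_k\bigr) \, = \, \sum_{(\la,\mu,\nu)\in\cP^3}\.g(\la,\mu,\nu)\,s_{\la'}(\bx)\,s_{\mu'}(\by)\,s_{\nu'}(\bz)\ts,$$
and then extract the coefficient of $\bx^{\la'}\by^{\mu'}\bz^{\nu'}$ on both sides to obtain the desired bound.

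For the first step, I would apply the fundamental involution $\omega$ on symmetric functions in each of the three alphabets $\bx,\by,\bz$ to Schur's identity used in the proof of Theorem~\ref{t:Kron-3dim-CT}. Writing $\prod_{i,j,k}(1-x_iy_jz_k)^{-1}=\prod_{j,k}H(\bx;\ts y_jz_k)$ and recalling $\omega_x H(\bx;t)=E(\bx;t)=\prod_i(1+x_it)$, the first application of $\omega_x$ sends this product to $\prod_{i,j,k}(1+x_iy_jz_k)$ and each $s_\la(\bx)$ on the right to $s_{\la'}(\bx)$. An analogous rewriting (with $\by$ playing the role of the main alphabet) shows $\omega_y$ restores the left side to $\prod(1-x_iy_jz_k)^{-1}$ --- consistent with the symmetry $g(\la,\mu,\nu)=g(\la',\mu',\nu)$ --- and $\omega_z$ returns it once more to the binary product. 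Thus applying $\omega_x\omega_y\omega_z$ in total yields the displayed identity.

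For the second step, I would expand $s_{\la'}(\bx)=\sum_\al K(\la',\al)\ts m_\al(\bx)$, and likewise in $\by$ and $\bz$, on the right; the left side is the familiar generating function $\sum_T\. \bx^{\mathrm{row}(T)}\ts\by^{\mathrm{col}(T)}\ts\bz^{\mathrm{dep}(T)}$ summed over all binary $3$-dimensional arrays $T$. Matching the coefficient of $\bx^\al\by^\be\bz^\ga$ on both sides gives
$$\rB(\al,\be,\ga) \, = \, \sum_{\la,\mu,\nu\vdash n}\.g(\la,\mu,\nu)\ts K(\la',\al)\ts K(\mu',\be)\ts K(\nu',\ga)\ts.$$
Specializing $(\al,\be,\ga)=(\la',\mu',\nu')$ and invoking $K(\la',\la')=K(\mu',\mu')=K(\nu',\nu')=1$ from Theorem~\ref{t:Kostka-dom}, the single term with $(\tilde\la,\tilde\mu,\tilde\nu)=(\la,\mu,\nu)$ contributes exactly $g(\la,\mu,\nu)$, while all remaining terms are non-negative. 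This gives $\rB(\la',\mu',\nu')\ge g(\la,\mu,\nu)$, as required.

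The only point that needs care is the parity bookkeeping in the first step: a single application of $\omega_x$ would produce $s_{\la'}(\bx)\ts s_\mu(\by)\ts s_\nu(\bz)$ on the right (hence only the weaker bound $\rB(\la',\mu,\nu)\ge g(\la,\mu,\nu)$), while applying $\omega$ in two of the alphabets cancels back to the original Schur identity. One must therefore apply $\omega$ in \emph{all three} alphabets so that the three Schur factors are simultaneously conjugated; everything else is a routine repetition of the argument behind Theorem~\ref{t:Kron-3dim-CT}.
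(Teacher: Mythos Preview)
Your proof is correct and coincides with the paper's first proof, which simply quotes the ``dual Schur's theorem'' $\sum g(\la,\mu,\nu)\,s_{\la'}(\bx)\,s_{\mu'}(\by)\,s_{\nu'}(\bz)=\sum \rB(\al,\be,\ga)\,m_\al(\bx)\,m_\be(\by)\,m_\ga(\bz)$ and extracts the monomial coefficient exactly as you do. Your additional contribution is the explicit derivation of this identity via the threefold $\omega$-involution applied to Schur's theorem, which the paper leaves implicit.
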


Although this result is well known in the literature,
we include two short proofs for completeness.

\begin{proof}[First proof]
By analogy with the proof of Theorem~\ref{t:Kron-3dim-CT}, recall
the \emph{dual Schur's theorem}:
$$
\sum_{(\la,\mu,\nu)\in \cP^3} \. g(\la,\mu,\nu) \, s_{\la'}(\bx) \. s_{\mu'}(\by) \. s_{\nu'}(\bz) \,
= \, \sum_{(\al,\be,\ga)\in \cP^3} \. \rB(\al,\be,\ga) \, m_\al(\bx) \. m_\be(\by) \ts m_\ga(\bz) \,.
$$
The result now follows by taking the coefficients in \ts $\bx^\al \ts \by^\be \ts \bz^\ga$
\ts on both sides.
\end{proof}

\begin{proof}[Second proof] Recall the proof of the first
inequality in Proposition~\ref{p:Kron-Kostka-CT}.
Equation~\eqref{eq:JK-lemma} states:
$$
\phi^\la \cdot \phi^\mu \,\, = \, \sum_{B\in \CT(\la,\mu)} \. \phi^B\..
$$
Now, for every \ts $B=(b_{ij})\in \CT(\la,\mu)$ \ts take \ts
$X_B:=(x_{ijk})\in \BCT(\la,\mu,B')$, where $x_{ijk} = 1$ if
$k \le b_{ij}$ and $x_{ijk} = 0$ otherwise.  Taking the binary
tables with $B'=\nu'$, we conclude:
$$
g(\la,\mu,\nu) \, \le \, \sum_{B\in \CT(\la,\mu)} \. K(\nu,B)
\, \le \, \rB(\la,\mu,\nu')\.,
$$
and the result follows from the symmetry \ts $g(\la,\mu,\nu') = g(\la',\mu',\nu')$.
\end{proof}

\bigskip

\begin{ex}{\rm Let $\la=\mu=\nu=(m+1,1^m)$, $n=2m+1$.  Note that in this
case \ts $\rB(\la,\mu,\nu) > m!$, by a placing $(m+1)$ ones along
a line and a permutation in $S_m$ into an orthogonal $2$-plane
On the other hand, \ts $g(\la,\mu,\nu)\le
f^\la=\binom{2m}{m}< 4^m$ \ts is a much
better estimate.  In fact, $g(\la,\mu,\nu)=1$ in this case,
see e.g.~\cite{Rem,Ros}.
}\end{ex}

\begin{ex}{\rm Let $\la=\mu=\nu=(\ell^k)$, where $n=\ell^3$, $k=\ell^2$.
Then \ts $\rB(\la,\mu,\nu)=1$ since there is a unique \ts
$\ell\times\ell\times\ell$ \ts binary table with all 2-dim sums $\ell^2$.
It is easy to see directly that $g(\la,\mu,\nu)=1$ in this case.
}\label{ex:CT-binary-rect-upper-1}\end{ex}

\begin{rem}{\rm
Note that Theorem~\ref{t:Kron-3dim-CT-binary} and the upper bound
in Theorem~\ref{t:barv-3dim-binary} give \emph{some} upper bound
on the Kronecker coefficients $g(\la,\mu,\nu)$.  We do not include
this bound since majorization property does not hold for the number
$\rB(\la,\mu,\nu)$ of binary contingency tables, and as a result
there is no closed form inequality in terms of the numbers of rows
of three partitions.
}\end{rem}

\medskip

\subsection{Tensor squares}
Let $\la=\nu\vdash n$, $\ell(\la)=\ell$, $\la_1=m$, $\ell(\nu)=r$.
By Theorem~\ref{t:Kron-3dim-CT-binary}, we have:
$$
g(\la,\la,\nu) \. = \. g(\la',\la,\nu') \. \le \. \rB(\la,\la',\nu)\ts.
$$
Let us show that this bound is weaker than the dimension bound:

\begin{prop} \label{p:CT-bin-dim-Kron-square}
We have: \ts $f^\nu \ts \le \ts  \rB(\la,\la',\nu)$.
\end{prop}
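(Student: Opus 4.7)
The plan is to construct an explicit injection $\Psi : \SYT(\nu) \hookrightarrow \BCT(\la,\la',\nu)$, from which $f^\nu = |\SYT(\nu)| \le \rB(\la,\la',\nu)$ follows immediately.

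First, I would fix any bijection $\phi$ between the label set $\{1,\ldots,n\}$ and the cells of the Young diagram of $\la$; row-reading order is convenient, but the specific choice is immaterial. Given a standard Young tableau $T \in \SYT(\nu)$, for each $k \in \{1,\ldots,n\}$ write $(i_k,j_k) := \phi(k)$ and let $s_k$ denote the row of $\nu$ in which the label $k$ appears in $T$. Define the $0/1$-array $\Psi(T) = (x_{ijs})$ by placing a single~$1$ in position $(i_k, j_k, s_k)$ for each $k=1,\ldots,n$, and $0$ elsewhere.

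The next step is to verify that the three $1$-marginals of $\Psi(T)$ match $\la$, $\la'$ and $\nu$. Summing $x_{ijs}$ over $(j,s)$ counts the cells of $\la$ whose first coordinate equals $i$, giving $\la_i$; summing over $(i,s)$ analogously gives $\la'_j$; and summing over $(i,j)$ counts labels assigned to row $s$ of $T$, producing $\nu_s$. Hence $\Psi(T) \in \BCT(\la,\la',\nu)$.

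For injectivity, one reads off from $\Psi(T)$, at each cell $(i,j)$ of $\la$, the unique row $s_k$ into which the label $k = \phi^{-1}(i,j)$ was placed by $T$. This recovers the row assignment of every label in $T$, and since $T$ is standard the labels within each row appear in strictly increasing order from left to right, so $T$ is reconstructed uniquely. Thus $\Psi$ is injective and the inequality follows.

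There is no real obstacle here; the only conceptual point worth flagging is that the column-strictness of $T$ is never invoked in the construction — it simply means that the image of $\Psi$ is typically a proper subset of $\BCT(\la,\la',\nu)$, which is precisely why this comparison yields a genuine upper bound on $f^\nu$ and demonstrates that the binary contingency table bound is weaker than the dimension bound in this setting.
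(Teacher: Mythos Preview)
Your proof is correct and uses essentially the same construction as the paper: both arguments lift the characteristic array of the Young diagram of~$\la$ into a three-dimensional binary table by distributing the $n$ cells of~$\la$ among the $r$ levels according to a row-assignment of weight~$\nu$. The only difference is packaging: the paper factors the inequality as \ts $f^\nu \le \binom{n}{\nu_1,\ldots,\nu_r} \le \rB(\la,\la',\nu)$, observing that \emph{all} such row-assignments give distinct tables in $\BCT(\la,\la',\nu)$, whereas you compose these two steps into a single injection $\SYT(\nu)\hookrightarrow \BCT(\la,\la',\nu)$ by using only the row-assignments that come from standard tableaux. Your route is slightly more direct; the paper's route yields the marginally stronger intermediate statement that the multinomial coefficient itself is bounded by $\rB(\la,\la',\nu)$.
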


This implies that the upper bounds in the theorem cannot disprove
the Saxl conjecture.  In fact, in sharp contrast with
examples~\ref{ex:motiv-Kostka} and~\ref{ex:motiv-3dim},
Theorem~\ref{t:Kron-3dim-CT-binary} does not give
non-trivial upper bound for any tensor square.

\begin{proof}[Proof of Proposition~\ref{p:CT-bin-dim-Kron-square}]
The result follows from two inequalities:
$$
f^\nu \. \le \. \binom{n}{\nu_1,\ts \ldots \ts, \ts \nu_r} \. \le \.  \rB(\la,\la',\nu)\ts.
$$
The first inequality is a trivial consequence of
$$
f^\nu \. = \.  \chi^\nu(1) \. \le \, \phi^\nu(1) \. = \. \binom{n}{\nu_1,\ts \ldots \ts, \ts \nu_r}.
$$
The second inequality follows from the following interpretation of the multinomial coefficient.
Start with an \ts $\ell\times m$ matrix $X=(x_{ij})$, where $x_{ij}=1$ if $j\le \la_i$, and $x_{ij}=0$ otherwise.
Consider all binary contingency tables $Y=(y_{ijk}) \in \BCT(\la,\la',\nu)$ which project onto~$X$
along the third (vertical) coordinate.  Because the horizontal margins are equal to $\nu_i$,
the number of such $Y$ is exactly the multinomial coefficient as above.
\end{proof}

\bigskip

\section{Pyramids approach} \label{sec:3CT-pyramids}

\subsection{Lower bound}
Let $\la,\mu,\nu\vdash n$.  A $3$-dimensional binary contingency table \ts
$X=(x_{ijk}) \in \BCT(\la,\mu,\nu)$ \ts is called a \emph{pyramid} if
whenever $x_{ijk}=1$, we also have $x_{pqr}=1$ for all \ts $p\le i$,
\ts $q\le j$, \ts $r\le k$.  Denote by \ts $\PCT(\la,\mu,\nu)$ \ts the set
of pyramids with margins $\la,\mu,\nu$. Finally, let \ts
$\rP(\la,\mu,\nu) := \bigl|\PCT(\la,\mu,\nu)\bigr|$ \ts denote
the number of pyramids.

\begin{thm}[{$\S$\ref{ss:fin-rem-history}}]
\label{t:Kron-3dim-CT-binary-lower}
We have: \. $\rP(\la',\mu',\nu')\ts \le \ts g(\la,\mu,\nu)$.
\end{thm}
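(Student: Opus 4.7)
The plan is to parallel the dual Schur identity strategy from the proof of Theorem~\ref{t:Kron-3dim-CT-binary}, but now isolate the pyramidal subcount on the combinatorial side so as to extract a lower bound for $g(\la,\mu,\nu)$ rather than an upper bound.

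First, I would reinterpret pyramids combinatorially. A pyramid $X=(x_{ijk})\in\PCT(\la',\mu',\nu')$ is equivalent to a plane partition $\pi_{ij}:=\#\{k:x_{ijk}=1\}$ whose row sums, column sums, and level counts $\#\{(i,j):\pi_{ij}\ge k\}$ are $\la'$, $\mu'$, $\nu'$, respectively. Equivalently, $X$ is encoded by a nested chain of partitions $\al^{(1)}\supseteq\cdots\supseteq\al^{(r)}$ with $|\al^{(k)}|=\nu'_k$, $\sum_k\al^{(k)}=\la'$, and $\sum_k(\al^{(k)})'=\mu'$. Crucially, this chain description is symmetric in the three axes (slicing $X$ along any coordinate gives an equivalent chain), which matches the $S_3$-symmetry of $g(\la,\mu,\nu)$ itself.

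Second, I would aim to establish a Schur-positive expansion
\[
\sum_{\pi}\bx^{\mathrm{rows}(\pi)}\by^{\mathrm{cols}(\pi)}\bz^{\mathrm{levels}(\pi)}
\,=\,\sum_{\la,\mu,\nu}c(\la,\mu,\nu)\,s_{\la'}(\bx)\,s_{\mu'}(\by)\,s_{\nu'}(\bz),
\]
where the sum on the left is over plane partitions and $0\le c(\la,\mu,\nu)\le g(\la,\mu,\nu)$. Combined with the dual Schur theorem (used in the proof of Theorem~\ref{t:Kron-3dim-CT-binary}), extracting the coefficient of $m_{\la'}(\bx)\ts m_{\mu'}(\by)\ts m_{\nu'}(\bz)$ on both sides and using $K_{\la\la}=1$ together with the dominance structure of the Kostka numbers then yields $\rP(\la',\mu',\nu')\le g(\la,\mu,\nu)$.

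The main obstacle is the second step: producing a clean Schur expansion of the pyramidal generating function with the required bound on the coefficients. A likely path is via nonintersecting lattice paths and the Lindstr\"om--Gessel--Viennot lemma applied to the chain representation of a pyramid, combined with a triple-Cauchy-type identity analogous to the one underlying the dual Schur theorem. An alternative, more representation-theoretic route is to attach to each pyramid an explicit vector in the Kronecker multiplicity space $(\SS^\la\otimes\SS^\mu\otimes\SS^\nu)^{S_n}$ (whose dimension is $g(\la,\mu,\nu)$) using Young symmetrizers applied to tensors built from the skew shapes $\al^{(k-1)}/\al^{(k)}$: linear independence is then immediate from the fact that each pyramid determines the triple of torus weights of its associated vector, though the genuine difficulty is to verify that the Young symmetrizers do not annihilate the construction --- exactly the subtle point which makes the absence of a combinatorial interpretation for $g(\la,\mu,\nu)$ painful here.
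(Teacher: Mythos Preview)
The paper does not give its own proof of this theorem: the bracketed reference in the theorem header points to \S\ref{ss:fin-rem-history}, where the result is attributed to the literature (Manivel, Vallejo~\cite{Val2}, and Ikenmeyer--Mulmuley--Walter~\cite[Lemma~2.6]{IMW}). So there is no in-paper argument to compare your proposal against; you are being asked to supply what the paper merely cites.

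That said, your proposal does not yet constitute a proof, and approach~1 as written has a genuine gap in the logic, not just a missing detail. Suppose you did succeed in writing the pyramid generating function as $\sum_{\al,\be,\ga} c(\al,\be,\ga)\, s_{\al'}(\bx)\, s_{\be'}(\by)\, s_{\ga'}(\bz)$ with $0\le c\le g$. Extracting the coefficient of $\bx^{\la'}\by^{\mu'}\bz^{\nu'}$ gives
\[
\rP(\la',\mu',\nu') \;=\; \sum_{\al\dom\la,\ \be\dom\mu,\ \ga\dom\nu} c(\al,\be,\ga)\, K(\al',\la')\, K(\be',\mu')\, K(\ga',\nu')\,.
\]
Nonnegativity and $K_{\la\la}=1$ yield only $\rP(\la',\mu',\nu')\ge c(\la,\mu,\nu)$, while $c\le g$ yields $\rP(\la',\mu',\nu')\le \sum g\cdot KKK = \rB(\la',\mu',\nu')$, which is just Theorem~\ref{t:Kron-3dim-CT-binary} again. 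Neither direction produces $\rP(\la',\mu',\nu')\le g(\la,\mu,\nu)$; the inequality you need does not fall out of this setup, no matter how the dominance triangularity is invoked. You would have to control the off-diagonal terms exactly, which amounts to proving the result by other means.

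Your approach~2 is closer in spirit to what is actually done in the literature. The representation-theoretic incarnation of the dual Schur identity is the $GL(V)\times GL(W)\times GL(U)$ decomposition of $\Lambda(V\otimes W\otimes U)$, in which $V_{\la'}\boxtimes W_{\mu'}\boxtimes U_{\nu'}$ occurs with multiplicity $g(\la,\mu,\nu)$. A pyramid with margins $(\la',\mu',\nu')$ does give a natural weight vector there, and the task is to promote these to linearly independent highest-weight vectors. Vallejo's original argument in~\cite{Val2} (see also~\cite{Man} and~\cite{IMW}) achieves this combinatorially rather than via Young symmetrizers: roughly, a pyramid is encoded as a matrix of nonnegative integers (a plane partition) in $\CT(\la',\mu')$ whose entries, read as a partition, give~$\nu$, and the monotonicity of a plane partition forces the associated contribution in the character expansion to be an extremal (``dominant'') term that cannot cancel. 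If you want to complete your second approach, that is the mechanism you should look for, rather than trying to verify non-annihilation of symmetrizers directly.
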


\begin{ex}{\rm In notation of Example~\ref{ex:CT-binary-rect-upper-1},
the unique binary table is a pyramid.  This implies that $g(\la,\mu,\nu)=1$
in this case.
}\end{ex}

\begin{ex}{\rm In notation of the Saxl conjecture, we have \ts
$\rP(\rho_\ell,\rho_\ell,\nu) = 0$, unless $\nu=(n)$.  Indeed, suppose
$X=(x_{ijk}) \in \BCT(\rho_\ell,\rho_\ell,\nu)$.  Then
$x_{\ell\ts 1\ts 1} =  x_{1\ts \ell\ts 1} =1$ since the last margins \ts
$a_\ell=b_\ell>0$. Since the first margins $a_1=b_1=\ell$, this implies
that \ts $x_{i\ts 1 \ts 2} = x_{1\ts j \ts 2} = 0$. Thus, \ts
$x_{i\ts j \ts 2} = 0$ for all $1\le i,j\le \ell$.  This implies that
\ts $\nu = (n)$, as desired.  Note also that \ts
$g\bigl(\la,\la,(n)\bigr)=1$ by definition and the symmetry \ts
$g\bigl(\la,\la,(n)\bigr)=g\bigl(\la,(n),\la\bigr)$.

In other words, the above argument and
Proposition~\ref{p:CT-bin-dim-Kron-square} imply that neither
Theorem~\ref{t:Kron-3dim-CT-binary} nor Theorem~\ref{t:Kron-3dim-CT-binary-lower}
are useful for bounding \ts $\rP(\rho_\ell,\rho_\ell,\nu)$ \ts as in the
Saxl conjecture.

In fact, the argument generalizes verbatim for \ts
$\rP(\la,\la',\nu)$, for all $\la\vdash n$.  This shows
that other instances of potential solutions for the
\emph{tensor square conjecture}~\cite{PPV} are also
unreachable with this approach.  }\end{ex}

\medskip

\subsection{Explicit construction} It was shown by Stanley~\cite{Stanley-kron} that
$$
\max_{\la\vdash n} \. \max_{\mu\vdash n} \. \max_{\nu\vdash n}
\,\, g(\la,\mu,\nu) \, = \, \sqrt{n!} \, e^{-O(\sqrt{n})}
$$
In~\cite{PPY}, we refined this to
$$
\frac{f^\la \. f^\mu}{\sqrt{p(n)\. n!}} \, \le \,\max_{\nu \vdash n}  \. g(\la,\mu,\nu)
 \, \le \, \min\bigl\{f^\la,\ts f^\mu\bigr\},
$$
where $p(n)$ is the number of partitions of~$n$.  Stanley's result follows
from an easy asymptotic formula:
$$
\max_{\la \vdash n}  \. f^\la \, = \, \sqrt{n!} \, e^{-O(\sqrt{n})}
$$
(cf.~\cite{VK2}). While we know the asymptotic shape maximizing $f^\la$,
see~\cite{VK1}, we do not know any explicit construction of \ts
$\la,\mu,\nu\vdash n$ \ts which satisfy \ts $g(\la,\mu,\nu) \ge \exp \Omega(n\log n)$.
Here by an \emph{explicit construction} we mean a complexity notion
(there is a deterministic poly-time algorithm for generating the triple),
but in fact we do not have a randomized algorithm either.  In fact, for
the purposes of this section, the naive combinatorial notion of an
``explicit construction'' will suffice.

The current best explicit construction of triples in~\cite[Thm~1.2]{PP2}
has \ts $g(\la,\mu,\nu) = \exp\Theta\bigl(\sqrt{n}\bigr)$, based
on a technical proof using both algebraic and analytic arguments in
the case \ts $\la=\mu=(\ell^\ell)$, for \ts $\nu=(k,k)$, \ts $n=\ell^2=2k$.
See also~\cite{MPP} which gives precise asymptotics in this case.
It is a major challenge to improve upon this (relatively weak) bound.
Below we give an elementary explicit construction of a better lower bound.

\medskip

\begin{thm} \label{t:Kron-explicit}
There is an explicit construction of
\ts $\la,\mu,\nu\vdash n$, such that:
$$g(\la,\mu,\nu) \. = \. \exp \Omega\bigl(n^{2/3}\bigr).$$
\end{thm}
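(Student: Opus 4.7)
The plan is to combine the pyramid lower bound of Theorem~\ref{t:Kron-3dim-CT-binary-lower} with a pigeonhole counting argument. By that theorem $g(\la,\mu,\nu) \ge \rP(\la',\mu',\nu')$, so it suffices to exhibit partitions $\la,\mu,\nu\vdash n$ with $\rP(\la,\mu,\nu)\ge \exp\Omega\bigl(n^{2/3}\bigr)$; conjugating the resulting triple then produces the desired explicit Kronecker witness.

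The key input is the classical bijection between pyramids of weight~$n$ and plane partitions of~$n$: a downward-closed set $S\ssu \nn^3$ with $|S|=n$ is recorded by the column-height function $h(i,j) = \#\{k:(i,j,k)\in S\}$, which is weakly decreasing in both coordinates. By the MacMahon--Wright asymptotic, the total count $p_2(n)$ of plane partitions of~$n$ satisfies $\log p_2(n) = \Theta\bigl(n^{2/3}\bigr)$, which matches the target order of magnitude exactly.

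I would then partition this set of pyramids according to their marginal triple $(\la,\mu,\nu)$.  Since each margin is a partition of~$n$, the number of possible triples is at most $p(n)^3 = \exp O\bigl(\sqrt{n}\bigr) = \exp o\bigl(n^{2/3}\bigr)$, using the Hardy--Ramanujan asymptotic $\log p(n) = \Theta\bigl(\sqrt{n}\bigr)$. A pigeonhole argument then produces a triple $(\la^\ast,\mu^\ast,\nu^\ast)$ achieved by at least $p_2(n)/p(n)^3 = \exp\Omega\bigl(n^{2/3}\bigr)$ pyramids, and applying Theorem~\ref{t:Kron-3dim-CT-binary-lower} to the conjugate triple yields
$$
g\bigl((\la^\ast)',(\mu^\ast)',(\nu^\ast)'\bigr) \, \ge \, \rP(\la^\ast,\mu^\ast,\nu^\ast) \, \ge \, \exp\Omega\bigl(n^{2/3}\bigr).
$$
In the naive combinatorial sense of ``explicit construction'' sanctioned by the remark preceding the theorem, this suffices: take $(\la^\ast,\mu^\ast,\nu^\ast)$ to be, say, the lexicographically first triple maximizing $\rP(\cdot,\cdot,\cdot)$ over partitions of~$n$.

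The main obstacle is obtaining genuine explicitness in the complexity sense, since pigeonhole identifies the triple only abstractly and offers no deterministic polynomial-time procedure for producing it. A stronger construction would presumably require evaluating $\rP(\la,\la,\la)$ directly for a concrete highly symmetric family of margins --- for instance, using the product formulas available for totally symmetric plane partitions to pick out a specific self-conjugate~$\la$ with $\rP(\la,\la,\la)=\exp\Omega\bigl(n^{2/3}\bigr)$, which would also imply Corollary~\ref{c:Kron-sym-sum} --- but no such refinement is required for the bound as stated.
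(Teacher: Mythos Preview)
Your pigeonhole argument is correct as far as it goes, but it is exactly the content of Proposition~\ref{p:Kron-explicit-counting}, which the paper states separately as motivation (``To understand this result, observe the following\ldots'') and does \emph{not} regard as a proof of the theorem. The paper's actual proof of Theorem~\ref{t:Kron-explicit} is a genuinely constructive amplification argument: starting from the specific base case $\al=\be=\ga=(7,4,2)\vdash 13$ with $\rP(\al,\be,\ga)=2$ (Lemma~\ref{l:Pyr-13}), one builds a large pyramid $Y$ of staircase type and replaces each boundary cell at level $i+j+k=s+2$ by one of the two pyramids $X,X'\in\PCT(7,4,2)$. This yields $2^{\binom{s+1}{2}}$ pyramids all with the \emph{same} explicitly described margins $(\la^{(s)},\mu^{(s)},\nu^{(s)})$ of size $n_s=\Theta(s^3)$, hence $\rP\ge \exp\Omega(n_s^{2/3})$ for partitions one can write down by a closed formula.

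The gap in your proposal is the claim that ``lexicographically first triple maximizing $\rP$'' qualifies as explicit under the paper's relaxed standard. The remark preceding the theorem is loosening the complexity-theoretic formalism, not abandoning constructivity altogether; a description that requires enumerating all $p(n)^3$ triples and computing $\rP$ for each is neither polynomial-time nor ``naively combinatorial'' in any useful sense. That the authors separate Proposition~\ref{p:Kron-explicit-counting} from Theorem~\ref{t:Kron-explicit} --- and that $\S$\ref{ss:fin-rem-Pyr} discusses how even locating a single $N$ with $\rP\ge 2$ via pigeonhole took them to $N=2100$ before the example $(7,4,2)$ was found --- makes clear they do not accept your reading. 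Your closing paragraph correctly identifies the obstacle; what you are missing is precisely the block-substitution step that overcomes it.
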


\medskip

To understand this result, observe the following:

\medskip

\begin{prop}\label{p:Kron-explicit-counting}
For some \ts $\la,\mu,\nu \vdash n$, we have
$$
\rP(\la,\mu,\nu) \, = \, \exp \Theta\bigl(n^{2/3}\bigr).
$$
\end{prop}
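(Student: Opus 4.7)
The plan is to identify pyramids with plane partitions and then apply a pigeonhole argument to the resulting sum.

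First I would set up a straightforward bijection between pyramids and plane partitions. Given a pyramid $X=(x_{ijk})\in\PCT(\la,\mu,\nu)$, define the array $A=(a_{ij})$ by $a_{ij} := \#\{k : x_{ijk}=1\}$. The down-closed condition on $X$ forces $A$ to be weakly decreasing along rows and columns, so $A$ is a plane partition of $n=|\la|$. Conversely, every plane partition of $n$ arises this way from a unique pyramid. In this correspondence, the three margins of $X$ become the three natural projections of $A$: the row sums $\la_i=\sum_j a_{ij}$, the column sums $\mu_j=\sum_i a_{ij}$, and the layer sizes $\nu_k=\#\{(i,j):a_{ij}\ge k\}$, each of which is automatically a weakly decreasing partition. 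Summing over all margin triples then gives the identity
$$
\sum_{\la,\mu,\nu\vdash n} \. \rP(\la,\mu,\nu) \, = \, p_2(n)\ts.
$$

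Next I would invoke the classical Wright--MacMahon asymptotic $p_2(n) = \exp\Theta(n^{2/3})$. This immediately yields the uniform upper bound $\rP(\la,\mu,\nu)\le p_2(n)=\exp O(n^{2/3})$ for every triple, which takes care of one direction of $\Theta$.

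For the matching lower bound I would use pigeonhole: the number of triples $(\la,\mu,\nu)$ with $|\la|=|\mu|=|\nu|=n$ is at most $p(n)^3 = \exp O(\sqrt{n})$ by Hardy--Ramanujan, so there must exist a triple satisfying
$$
\rP(\la,\mu,\nu) \, \ge \, \frac{p_2(n)}{p(n)^3} \, = \, \exp\Theta(n^{2/3})\ts.
$$
The only substantive input here is the asymptotic $p_2(n) = \exp\Theta(n^{2/3})$, which is classical, so there is no real obstacle to the statement itself. The potentially delicate point lies slightly outside the statement: converting this existential pigeonhole argument into the \emph{explicit} construction needed for Theorem~\ref{t:Kron-explicit} requires exhibiting a concrete triple $(\la,\mu,\nu)$ whose pyramid count realizes the right exponential order, which I would expect to handle by choosing margins matching the typical shape of a Wright-random plane partition (for instance, uniform margins of the appropriate $n^{1/3}$ scale).
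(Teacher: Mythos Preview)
Your proof is correct and follows essentially the same approach as the paper: both identify $\sum_{\la,\mu,\nu\vdash n}\rP(\la,\mu,\nu)=p_2(n)$, invoke the asymptotics $p_2(n)=\exp\Theta(n^{2/3})$ and $p(n)^3=\exp\Theta(\sqrt{n})$, and conclude by pigeonhole. You are simply more explicit about the pyramid/plane-partition bijection than the paper, which takes it for granted.
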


\begin{proof}
Denote by
$$
p_2(n) \. := \. \sum_{\la,\mu,\nu\vdash n} \. \rP(\la,\mu,\nu)
$$
the number of \emph{plane partitions} of~$n$, see e.g.~\cite{EC2}.
Recall that the number of triples of margins $\la,\mu,\nu\vdash n$ is
$$p(n)^3 \. = \. \exp \Theta(\sqrt{n}), \quad \text{while} \quad
p_2(n) \. = \. \exp \Theta\bigl(n^{2/3}\bigr),
$$
see e.g.~\cite[$\S$VIII.24-25]{FS}. This implies the result.
\end{proof}

\medskip

\begin{lemma}[{\cite[Ex.~3.3]{Val-red}}]
\label{l:Pyr-13}
For \ts
$\al=\be=\ga=(7,4,2) \vdash 13$ \ts we have \ts $\rP(\al,\be,\ga)= 2$.
\end{lemma}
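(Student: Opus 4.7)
The plan is to reduce the pyramid count to an enumeration of plane partitions with prescribed margins. First I would invoke the standard bijection between pyramids $X = (x_{ijk}) \in \PCT(\al, \be, \ga)$ and plane partitions $\pi = (\pi_{ij})$ given by $\pi_{ij} := \#\{k : x_{ijk} = 1\}$. The $3$-dimensional order-ideal condition on $X$ becomes exactly the condition that $\pi$ is weakly decreasing along rows and columns, and the margins translate into
\[
\al_i \, = \, \sum_j \pi_{ij}\ts, \qquad \be_j \, = \, \sum_i \pi_{ij}\ts, \qquad
\ga_k \, = \, \#\bigl\{(i,j) \. : \. \pi_{ij} \ge k\bigr\}.
\]

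Specializing to $\al = \be = \ga = (7,4,2)$, the $\ga$-condition forces the multiset of entries of $\pi$ to consist of three $1$'s, two $2$'s, and two $3$'s, so the support of $\pi$ has exactly $7$ cells and $|\pi| = 3 + 4 + 6 = 13$ as required. Since $\ell(\al) = \ell(\be) = 3$, the Young-diagram shape of $\pi$ must fit in a $3 \times 3$ box, and the only such shapes of size $7$ are $(3,3,1)$ and $(3,2,2)$.

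It remains to enumerate by shape. For shape $(3,3,1)$, row $3$ (of length $1$) must equal $(2)$, and row $2$ (three positive entries $\le 3$ summing to $4$) must equal $(2,1,1)$; row $1$ then has the two weakly decreasing candidates $(3,3,1)$ and $(3,2,2)$, and only the first yields column sums $(7,4,2)$. For shape $(3,2,2)$, row $3$ must be $(1,1)$; row $2$ is $(3,1)$ or $(2,2)$, and row $1$ is $(3,3,1)$ or $(3,2,2)$; among the four combinations only row $1 = (3,2,2)$ together with row $2 = (3,1)$ produces column sums $(7,4,2)$. The two resulting plane partitions are
\[
\pi^{(1)} \. = \. \bigl((3,3,1),\ts (2,1,1),\ts (2)\bigr) \quad \text{and} \quad
\pi^{(2)} \. = \. \bigl((3,2,2),\ts (3,1),\ts (1,1)\bigr)\ts,
\]
so $\rP(\al,\be,\ga) = 2$.

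The only potential obstacle is ensuring the case split is exhaustive, but since there are only two candidate shapes with at most two choices for each of rows $1$ and $2$, the verification is short and purely mechanical.
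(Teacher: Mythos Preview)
Your proof is correct. The paper itself does not prove this lemma but simply cites it as \cite[Ex.~3.3]{Val-red}; your direct enumeration via the standard bijection between pyramids and plane partitions is exactly the natural verification. Two small points are worth making explicit for completeness: you should confirm that the two surviving arrays $\pi^{(1)}$ and $\pi^{(2)}$ are weakly decreasing down columns (not only along rows), and that their level-set profiles really equal $\ga=(7,4,2)$ --- both checks are immediate from the explicit forms you wrote down, but since your row-by-row enumeration only enforces the necessary conditions ``shape of size~$7$'' and ``entries $\le 3$'' from~$\ga$, the full $\ga$-margin must in principle be verified for the survivors.

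It is also worth observing that $\pi^{(1)}$ and $\pi^{(2)}$ are transposes of one another; this connects with the paper's remark in~$\S$\ref{ss:fin-rem-Pyr} that the example arises from a cyclically symmetric but not totally symmetric plane partition and its conjugate.
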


\begin{proof}[Proof of Theorem~\ref{t:Kron-explicit}]
Fix $N=13$, and two distinct $X,X'\in \PCT(\al,\be,\ga)$ for some
$\al=\be=\ga=(7,3,2)\vdash N$ as in the example above.
Denote \ts $\ell = \ell(\al)=3$.

Let $s\ge 1$.  Consider a matrix
$$
Y =(y_{ijk}) \. \in \. \PCT(\theta_{s-1},\theta_{s-1},\theta_{s-1}\bigr)
$$
given by $y_{ijk} =1$ for all $i+j+k \le s+1$, and $y_{ijk} =0$
otherwise, so  $\theta_{s-1} = ( \binom{s}{2}, \binom{s-1}{2},\ldots,\binom{2}{2} )$.
Replace each $1$ by an all-$1$ matrix of size $\ell\times \ell \times \ell$,
each $0$ with $i+j+k=s+2$ with $X$ or~$X'$, and the remaining $0$'s
by an all-$0$ matrix of the same size.  There are \ts $2^{s(s+1)/2}=\exp \Omega(s^2)$ \ts
resulting pyramids which all have the same margins \ts
$\bigl(\la^{(s)},\mu^{(s)},\nu^{(s)}\bigr)$, where \ts
$$
n_s \. := \. \bigl|\la^{(s)}\bigr| \. = \. \bigl|\mu^{(s)}\bigr| \. = \. \bigl|\nu^{(s)}\bigr| \.
= \. \ell^3 \cdot \bigl(1+3+\ldots + s(s-1)/2\bigr) \. + \. N \cdot \binom{s+1}{2}
\, = \, \Theta(s^3)\ts.
$$
By Theorem~\ref{t:Kron-3dim-CT-binary-lower}, this gives a lower bound \ts
$$
g\bigl((\la^{(s)})',(\mu^{(s)})',(\nu^{(s)})'\bigr) \, \ge  \,
\rP\bigl((\la^{(s)}),\mu^{(s)}),(\nu^{(s)})\bigr) \, =
\, \exp \Theta\bigl(n_s^{2/3}\bigr)\.,
$$
as desired.  \end{proof}

\smallskip

\begin{cor} \label{c:Kron-sym-sum}
Let \ts $\cL_n = \{\la\vdash n, \la=\la'\}$.  We have:
$$
\sum_{\la\in \cL_n} \. g(\la,\la,\la) \, = \,
\exp \Omega\bigl(n^{2/3}\bigl)\ts.
$$
\end{cor}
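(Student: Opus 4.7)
The plan is to combine Theorem~\ref{t:Kron-3dim-CT-binary-lower} with a symmetrized version of the construction used in the proof of Theorem~\ref{t:Kron-explicit}. First, for each self-conjugate $\la \in \cL_n$ we have $\la' = \la$, so Theorem~\ref{t:Kron-3dim-CT-binary-lower} yields
$$
g(\la,\la,\la) \, \ge \, \rP(\la',\la',\la') \, = \, \rP(\la,\la,\la).
$$
Summing over $\cL_n$ gives
$$
\sum_{\la \in \cL_n} g(\la,\la,\la) \, \ge \, \sum_{\la \in \cL_n} \rP(\la,\la,\la),
$$
the right-hand side counting pyramids of volume $n$ whose three margins all coincide with the same self-conjugate partition. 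It therefore suffices to exhibit a single $\bar\la \in \cL_n$ with $\rP(\bar\la,\bar\la,\bar\la) \ge \exp\Omega(n^{2/3})$, since the left-hand sum has at most $\exp O(\sqrt n)$ summands (but we only need one of them to be large).

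To produce such a $\bar\la$, I would modify the construction in the proof of Theorem~\ref{t:Kron-explicit}. That construction places an all-ones $\ell\times\ell\times\ell$ cube at each cell of a tetrahedral base $Y$ and one of the two small pyramids $X, X' \in \PCT(\al,\al,\al)$ from Lemma~\ref{l:Pyr-13} at each boundary cell, producing $2^{\binom{s+1}{2}}=\exp\Omega(n^{2/3})$ distinct pyramids with a common triple margin $(\la^{(s)},\la^{(s)},\la^{(s)})$. The margin $\la^{(s)}$ is typically not self-conjugate, but one can enforce self-conjugacy in two steps: (i) symmetrize the boundary filling so that the same choice of $X$ or $X'$ is made within each $S_3$-orbit of boundary positions -- this still leaves $\Theta(s^2) = \Theta(n^{2/3})$ independent binary choices and makes the resulting pyramid invariant under coordinate permutations; and (ii) choose the scale $\ell$ and the building blocks so that the resulting common margin $\bar\la$ satisfies $\bar\la = \bar\la'$.

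The main obstacle is step~(ii): the cube and boundary contributions to $\bar\la$ live at different scales (roughly $\ell^2\binom{s+1-i}{2}$ and $(s+1-i)\,\al_r$ at the $r$-th row within block $i$, as follows from a direct computation of the construction), so making $\bar\la$ self-conjugate requires either fine tuning of the parameters or replacing the tetrahedral base by a shape whose margin is already self-conjugate. An alternative way around this tuning is to apply the original construction at volume $n/2$ to obtain $\exp\Omega(n^{2/3})$ pyramids with common margin $\la^{(s)}$, then for each such pyramid $\tilde X$ embed it together with a transposed copy $\tilde X'$ inside a bounding box so that the union is a pyramid of volume $n$ with common margin $\bar\la = \la^{(s)}\sqcup(\la^{(s)})'$; the latter is self-conjugate by construction, and the family still has cardinality $\exp\Omega(n^{2/3})$. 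Either route produces $\bar\la \in \cL_n$ with $\rP(\bar\la,\bar\la,\bar\la) \ge \exp\Omega(n^{2/3})$, and combined with the reduction above this proves the corollary.
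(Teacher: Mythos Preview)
Your opening reduction matches the paper: apply Theorem~\ref{t:Kron-3dim-CT-binary-lower} to each $\la\in\cL_n$ and pass to $\sum_{\la\in\cL_n}\rP(\la,\la,\la)$. From there the paper takes a much shorter route than you do. Rather than isolating a single $\bar\la$, it bounds the whole sum from below by $|\cA_n|$, the number of totally symmetric plane partitions of~$n$, and then uses $|\cA_n|=\exp\Omega(n^{2/3})$ (from the explicit product formula, or by a symmetrized version of the construction in Theorem~\ref{t:Kron-explicit}). So your reduction to ``exhibit a single $\bar\la$'' is logically sufficient but makes the problem harder than necessary.

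Both of your proposed routes have genuine gaps. In route~1, step~(i) does not actually make the large pyramid $S_3$-invariant: a coordinate permutation $\sigma$ moves blocks between positions \emph{and} acts on each block's contents, and the building blocks $X,X'$ of Lemma~\ref{l:Pyr-13} are not themselves totally symmetric (see~$\S$\ref{ss:fin-rem-Pyr}: one is a cyclically symmetric plane partition and the other its transpose). More to the point, $S_3$-invariance of a pyramid does not force its margin to be self-conjugate --- the four-cell totally symmetric pyramid $\{(1,1,1),(2,1,1),(1,2,1),(1,1,2)\}$ has margin $(3,1)$ --- so step~(ii) is the entire difficulty, not a cosmetic adjustment, and you leave it open. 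In route~2, under the usual reading of $\sqcup$ (merge and sort the parts), $\la^{(s)}\sqcup(\la^{(s)})'$ is not self-conjugate in general (e.g.\ $(3,1)\sqcup(2,1,1)=(3,2,1,1,1)$, whose conjugate is $(5,2,1)$), and ``embed $\tilde X$ together with a transposed copy inside a bounding box'' is not a valid pyramid operation: pyramids are order ideals in $\nn^3$ anchored at the origin, so two disjoint pieces cannot simply be unioned and remain a pyramid with the stated margins.
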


\begin{proof}
A plane partition $A$ is called \emph{totally symmetric} if the
corresponding pyramid has an $S_3$-symmetry, see Case~4
in~\cite{Kra,Stanley-pp} and
\cite[\href{https://oeis.org/A059867}{A059867}]{OEIS}.
Denote by $\cA_n$ the set of totally symmetric
plane partitions of~$n$.
By the symmetry, the margins of $A\in \cA_n$ are triples
$(\la,\la,\la)$, where $\la \in \cL_n$.
From Theorem~\ref{t:Kron-3dim-CT-binary-lower}, we have:
$$
\sum_{\la\in \cL_n} \. g(\la,\la,\la) \, \ge  \,
\sum_{\la\in \cL_n} \. \rP(\la,\la,\la) \, \ge  \, \bigl|\cA_n\bigr|\ts.
$$
It is easy to see by an argument similar to the proof of
Theorem~\ref{t:Kron-explicit} above (or from the explicit
product form GF), that:
$$\bigl|\cA_n\bigr|  \, = \,
\exp \Omega\bigl(n^{2/3}\bigl).
$$
This completes the proof.
\end{proof}

Recall that $|\cL_n|=\exp \Theta\bigl(n^{1/2}\bigl)$,
see e.g.~\cite[\href{https://oeis.org/A000700}{A000700}]{OEIS}.
It was shown in~\cite{BB} that \ts $g(\la,\la,\la) \ge 1$ \ts for all
$\la\in\cL_n$.  This gives only the \ts $\exp \Omega\bigl(n^{1/2}\bigl)$ \ts lower
bound for the LHS in Corollary~\ref{c:Kron-sym-sum}.  Of course, we believe a much
stronger bound lower holds:

\begin{conj} \label{conj:Kron-sym-sum}
Let \ts $\cL_n := \{\la\vdash n, \ts \la=\la'\}$.  We have:
$$
\sum_{\la\in \cL_n} \. g(\la,\la,\la) \, = \,
\exp \. \left[\frac12 \. n \ts \log n \. + \. O(n)\right] \ts.
$$
\end{conj}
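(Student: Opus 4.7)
I would split the conjecture into matching upper and lower bounds, the lower bound carrying the substance. The upper bound follows from $g(\la,\la,\la)\le f^\la$ in \eqref{eq:Kron-dim-upper}, the Vershik--Kerov asymptotics $\max_\la f^\la = \sqrt{n!}\.\exp[-O(\sqrt n)]$ quoted in the paper, and $|\cL_n|=\exp\Theta(\sqrt n)$ recorded after Corollary~\ref{c:Kron-sym-sum}. Stirling in the form $\log\sqrt{n!}=\tfrac12 n\log n+O(n)$ then gives
$$\sum_{\la\in\cL_n}g(\la,\la,\la)\, \le\, |\cL_n|\cdot \max_\la f^\la\, =\, \exp\bigl[\tfrac12 n\log n+O(n)\bigr].$$

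For the lower bound, the pyramid route of Theorem~\ref{t:Kron-explicit} is capped at $\exp\Theta(n^{2/3})$ by the total count $p_2(n)$ of plane partitions, so a structurally different mechanism is required. The natural candidate is the staircase $\rho_\ell$: it is self-conjugate and has $f^{\rho_\ell}=\sqrt{n!}\.\exp[-O(\sqrt n)]$. The PPY refinement $\max_\nu g(\la,\la,\nu)\ge (f^\la)^2/\sqrt{p(n)\.n!}$ applied to $\la=\rho_\ell$ already produces some $\nu^\star\vdash n$ with $g(\rho_\ell,\rho_\ell,\nu^\star)\ge \exp[\tfrac12 n\log n-O(\sqrt n)]$. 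Since $\rho_\ell=\rho_\ell'$, a double application of \eqref{eq:Kron-transp} yields $g(\rho_\ell,\rho_\ell,\nu)=g(\rho_\ell,\rho_\ell,\nu')$, so in the expansion $(f^{\rho_\ell})^2 = \sum_\nu g(\rho_\ell,\rho_\ell,\nu)\.f^\nu$ the non-self-conjugate $\nu$ appear in equal pairs. Provided the self-conjugate block retains at least an $\exp[-O(n)]$ fraction of $(f^{\rho_\ell})^2$, pigeonholing against $\sum_{\nu\in\cL_n} f^\nu=\exp[\tfrac12 n\log n+O(n)]$ would produce $\nu^\star\in\cL_n$ with $g(\rho_\ell,\rho_\ell,\nu^\star)\ge \exp[\tfrac12 n\log n-O(n)]$.

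The principal obstacle is then to transport this mass from $(\rho_\ell,\rho_\ell,\nu^\star)$ to a truly diagonal triple $(\la,\la,\la)$ with $\la\in \cL_n$. The cleanest route is character-theoretic: since $\chi^{\la'}=\operatorname{sgn}\cdot\chi^\la$ forces self-conjugate characters to vanish on odd permutations,
$$\sum_{\la\in\cL_n}g(\la,\la,\la)\, =\, \frac{1}{n!}\sum_{\sigma\text{ even}}\.\sum_{\la\in\cL_n}\chi^\la(\sigma)^3\ts.$$
The identity term alone contributes $\sum_{\la\in\cL_n}(f^\la)^3/n!\ge (f^{\rho_\ell})^3/n! = \sqrt{n!}\.\exp[-O(\sqrt n)]$, which is already of the right order. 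The real difficulty is controlling cancellation from $\sigma\ne e$ at scale $\exp O(n)$; such concentration of a cubic character sum appears to lie outside the reach of standard character bounds, and I expect this is the genuine heart of the conjecture. Absent such a concentration estimate, partial progress could come from combining a conjectural strengthening of Saxl with a self-conjugate-symmetrized form of Theorem~\ref{t:Kron-3dim-CT-binary-lower}.
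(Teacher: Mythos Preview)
The statement is a \emph{conjecture}; the paper does not prove it. Immediately after stating it the authors write only that ``the upper bound follows from equation~\eqref{eq:Kron-dim-upper}'' and refer to~\cite{PPY} for motivation. Your upper-bound paragraph is exactly this argument, fleshed out with the bounds on $|\cL_n|$ and $\max_\la f^\la$ already quoted in the paper, and it is correct.

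The lower bound is genuinely open, and you have correctly diagnosed this rather than papered over it. Your observations are sound as heuristics: the pyramid route is indeed capped at $\exp\Theta(n^{2/3})$ by $p_2(n)$; the PPY pigeonhole produces a large $g(\rho_\ell,\rho_\ell,\nu^\star)$ but gives no control over whether $\nu^\star$ is self-conjugate, let alone equal to $\rho_\ell$; and in the character-sum formulation the identity contribution is of the right size while the obstruction is precisely the cancellation from $\si\ne e$. None of these steps is a proof, and you say so. That is the honest situation: the paper offers no proof of the lower bound either.

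One small correction: for the staircase the paper records $f^{\rho_\ell}=\sqrt{n!}\,e^{-O(n)}$ (see~$\S$\ref{ss:fin-rem-bound}), not $e^{-O(\sqrt n)}$; the $e^{-O(\sqrt n)}$ bound belongs to the Plancherel-optimal shape. This does not affect any of your $\tfrac12 n\log n + O(n)$ conclusions, but it does mean the intermediate claim ``$g(\rho_\ell,\rho_\ell,\nu^\star)\ge \exp[\tfrac12 n\log n - O(\sqrt n)]$'' should have $O(n)$ in the error.
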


\smallskip

In the conjecture, the upper bound follows from
eiquation~\eqref{eq:Kron-dim-upper}.  We refer to \cite[$\S$3]{PPY}
for partial motivation behind this conjecture.

\bigskip

\section{Final remarks and open problems} \label{sec:fin-rem}

\subsection{} \label{ss:fin-rem-history}
The history of
theorems~\ref{t:Kron-3dim-CT},~\ref{t:Kron-3dim-CT-binary}
and~\ref{t:Kron-3dim-CT-binary-lower} is a bit confusing.
As we show, the upper bounds are immediate consequences
of the standard symmetric functions identities, yet
the binary version has been rediscovered imultiple times.
On the one hand, as we show in the first proof of
Theorem~\ref{t:Kron-3dim-CT-binary}, it already follows
from James and Kerber~\cite[Lemma~2.9.16]{JK}, and the
combinatorics of Kostka numbers.  Manivel~\cite{Man}
discusses the same type of inequalities in a different
context, and Vallejo proves Theorem~\ref{t:Kron-3dim-CT-binary-lower}
in a special case~\cite[Cor.~3.5]{Val2}.  The first explicit
statement of Theorem~\ref{t:Kron-3dim-CT-binary}
(up to a restatement as in the second proof of
Theorem~\ref{t:Kron-3dim-CT-binary}),
was given by Vallejo~\cite{Val2}.  In~\cite[Lemma~2.6]{IMW},
these two theorems are included in exactly the same form
as we state. Theorem~\ref{t:Kron-3dim-CT} can be deduced
from Eq.~(6) in~\cite{AV}, although it is not stated in this form.

\subsection{} \label{ss:fin-rem-bound}
Note that in many cases no nontrivial bounds on Kronecker coefficients
are known.  For example, here are the best known bounds in our favorite example:
$$
1 \. \le \. g(\rho_k,\rho_k,\rho_k) \. \le \. f^{\rho_k} \, = \.
\sqrt{n!} \, e^{-O(n)} \ts,
$$
see~\cite{BB,PPY}. Towards the Saxl conjecture, it is known that $g(\rho_k,\rho_k,\mu)>0$
for $\exp\Omega(\sqrt{n})$ partitions $\mu$ with the same principal
hooks as $\rho_k$~\cite[Prop.~4.14]{PPV}.  Same result holds for \ts
$\{\mu \rdom\rho_k\}$~\cite{Ike}.  Of course, the constants implied by the
$\Omega(\cdot)$ notation is  smaller than that in~$p(n)$.
Compare this with a remarkable recent result~\cite[Thm.~B]{BBS}, that
$$
 g(\rho_k,\rho_k,\mu) \. > \. 0 \quad \text{for all} \ f^{\mu} \ \text{odd}.
$$
The number of partitions \ts $\mu\vdash n$ \ts s.t. $f^{\mu}$ \ts is odd is
computed in~\cite{McK}, see also \cite[\href{https://oeis.org/A059867}{A059867}]{OEIS},
and is bounded from above by \ts $\exp O\bigl((\log n)^2\bigr)$.

\subsection{} \label{ss:fin-rem-explicit}
In addition to explicit constructions of Kronecker coefficients,
one can ask similar questions about Kostka numbers and LR--coefficienits.
In fact, this question is trivial for Kostka numbers, since $K(\la,\mu)$
maximizes for $\mu=(1^n)$ and $\la\vdash n$ is of Plancherel shape, cf.~\cite{PPY}.
Recall that $K(\la,\mu)$ is a special case of
LR-coefficients of size $O(n^2)$, see~\cite{PV}.  This gives an \ts
$\exp\Omega(\sqrt{n}\log n)$ \ts lower bound for an explicit
construction of LR--coefficients.  In turn, Murnaghan's theorem
$$g\bigl((N+|\mu|,\nu), \, (N+|\nu|, \mu), \, (N,\la)\bigr) \, = \,
c^\la_{\mu,\nu}\,, \quad \text{for all} \ \ N> |\la| = |\mu|+|\nu|
$$
(see e.g.~\cite{BOR}), would give an explicit construction for
Kronecker coefficients of the same order.  This is weaker than
Theorem~\ref{t:Kron-explicit}.

We expect that one should be able to improve the LR-- and Kronecker
coefficients explicit constructions to $\exp \Theta(n)$ by using
a construction in~\cite[proof of Thm.~4.14]{PPY} based on the
\emph{Knutson--Tao puzzles}.
Of course, that proof is non-explicit and uses a counting argument
similar to the one preceding Lemma~\ref{l:Pyr-13}, but we expect
it to be made explicit in a way similar to the proof of
Theorem~\ref{t:Kron-explicit}.  We intend to return to
this problem in the future.

\subsection{} \label{ss:fin-rem-complexity}  The breakthrough
paper~\cite{IMW} not only proves that the vanishing problem \ts
$g(\la,\mu,\nu) >^? 0$ \ts is $\NP$-hard, it also
proves that computing $g(\la,\mu,\nu)$ is \emph{strongly $\SP$-hard}, i.e.\
$\SP$-hard when the input is in unary.\footnote{C.~Ikenmeyer,
personal communication (2020).}  This would give further evidence
in favor of computing $K(\la,\mu)$ and $\rT(\al,\be)$ being strongly
$\SP$-complete, cf.~\cite{PP1}..

\subsection{} \label{ss:fin-rem-Pyr}
Our original version of Lemma~\ref{l:Pyr-13} was based on
Proposition~\ref{p:Kron-explicit-counting} as follows.
Check that \ts
$p_2(2100) \approx 1.47 \cdot 10^{141}$, while \ts $p(2100)^3 \approx 4.46 \cdot 10^{140}$,
see tables in \cite[\href{https://oeis.org/A000219}{A000219}]{OEIS} and
\cite[\href{https://oeis.org/A000041}{A000041}]{OEIS}, respectively.  Since
$p_2(2100) > p(2100)^3$, we obtain an explicit construction of \ts
$\rP(\al,\be,\ga)\ge 2$ \ts for some $\al,\be,\ga\vdash N=2100$.
In principle, one should be able to avoid even this calculation, and obtain a bound
$p_2(n) > p(n)^3$ for an explicit $n$ by using tight asymptotic estimates.
Unfortunately, bounds on the error term for $p(n)$, see e.g.~\cite{DP},
are lacking for $p_2(n)$, cf.~\cite{GP}.

The value $N=2100$ is much larger than $N=13$ in Lemma~\ref{l:Pyr-13}.
We first learned of the example $\la=(7,4,2)$ from John Machacek.\footnote{See \url{https://mathoverflow.net/questions/351376/plane-partitions-with-equal-margins}\ts.}
In fact, Gjergji Zaimi showed that $n=13$ is the smallest possible (ibid.)  Vallejo later
informed us that he published this example in~\cite{Val-red}.
To understand and generalize this example, take a \emph{cyclically symmetric}
but not totally symmetric plane partition~$A$ (see~\cite{Kra,Stanley-pp}, Case~3),
with margins $(\la,\la,\la)$, s.t. $\la=\la'$. Then $A'\ne A$ but both plane partitions
have the same margins, implying that \ts $g(\la,\la,\la)\ge 2$.

\subsection{} \label{ss:fin-rem-MO} In the context of $\S$\ref{sec:3CT-pyramids},
we believe in the following claims.

\begin{conj}\label{c:pyramis-many}
Denote by $a(n)$ and $b(n)$ the number of triples $(\la,\mu,\nu)$, $\la,\mu,\nu\vdash n$,
s.t. $\rB(\la,\mu,\nu)\ge 1$ \ts and \ts $\rP(\la,\mu,\nu)\ge 1$, respectively.
Then:
$$
\frac{a(n)}{p(n)^3} \. \to \. 1 \quad \text{and} \quad  \frac{b(n)}{p(n)^3} \. \to \. 0 \ \ \text{as} \ \ n\to \infty\ts.
$$
\end{conj}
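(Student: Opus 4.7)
The conjecture combines two density statements requiring different techniques. For $a(n)/p(n)^3\to 1$, I would prove that binary $3$-dimensional contingency tables exist for generic triples via a probabilistic construction. For $b(n)/p(n)^3\to 0$, I would bijectively reduce pyramids to plane partitions and exploit the $n^{1/3}$ limit shape.

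For the first part, the Erd\H{o}s--Lehner theorem gives that a uniform random $\la\vdash n$ satisfies $\la_1,\ell(\la)\sim\tfrac{\sqrt{6}}{\pi}\sqrt{n}\log n$. Hence for a uniform random triple $(\la,\mu,\nu)$, with probability $\to 1$ all six of $\la_1,\mu_1,\nu_1,\ell(\la),\ell(\mu),\ell(\nu)$ are of this order, so $\ell(\la)\ell(\mu)\ell(\nu)\sim n^{3/2}(\log n)^3\gg n$ and the necessary bounds $\la_i\le\ell(\mu)\ell(\nu)$ hold with substantial slack. To convert this to existence of a binary CT, I would choose potentials $a_i,b_j,c_k$ and set $x_{ijk}=1$ independently with probability $p_{ijk}=1/(1+\exp(-a_i-b_j-c_k))$, tuned so expected margins equal $(\la,\mu,\nu)$. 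Concentration of Bernoulli sums yields actual margins within $O(\sqrt{n}\log n)$ of the targets with high probability; the remaining discrepancy is corrected by local $2\times 2\times 2$ flips. An alternative route is via Theorem~\ref{t:Kron-3dim-CT-binary}: since $g(\la',\mu',\nu')\le\rB(\la,\mu,\nu)$, generic nonvanishing of Kronecker coefficients would suffice, though that is itself a notorious open problem.

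For the second part, a pyramid $X\in\PCT(\la,\mu,\nu)$ is in bijection with a plane partition $P$ of~$n$ via $P_{ij}=|\{k:x_{ijk}=1\}|$, under which $\la_i=\sum_j P_{ij}$, $\mu_j=\sum_i P_{ij}$, and $\nu_k=|\{(i,j):P_{ij}\ge k\}|$. Hence $b(n)$ equals the number of distinct margin triples over plane partitions of~$n$. The $n^{1/3}$ limit shape theorem asserts that a uniform random plane partition of~$n$, rescaled by $n^{1/3}$, converges to a deterministic surface; in particular $\ell(\la),\ell(\mu),\ell(\nu)=\Theta(n^{1/3})$ with probability $\to 1$. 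The number of partition triples of~$n$ with every length at most $Cn^{1/3}$ is $\exp O(n^{1/3}\log n)=o(\exp \pi\sqrt{2n/3})=o(p(n)^3)$. Atypical plane partitions, those with some dimension $\gg n^{1/3}$, must be handled via a separate large-deviation argument.

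The main obstacle in part~1 is the discrete rounding step: the Bernoulli sample matches margins only approximately, and exact correction requires uniform connectivity of the flip graph on binary tables throughout the generic regime. The main obstacle in part~2 is controlling the atypical plane partitions, since $p_2(n)\sim\exp(cn^{2/3})\gg p(n)^3$ renders the trivial bound $b(n)\le p_2(n)$ useless; a quantitative large-deviation estimate bounding both the count of plane partitions with some dimension $\ge Ln^{1/3}$ and the distinct margins they can produce is needed to sum to $o(p(n)^3)$.
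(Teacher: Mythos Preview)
This statement is labeled \textbf{Conjecture}~\ref{c:pyramis-many} in the paper and is presented in the final-remarks section as an open problem; the paper offers no proof, only the motivation that it parallels Conjecture~\ref{c:Kron-many} and the Wilf--Pittel result on dominance. There is therefore no ``paper's own proof'' to compare your attempt against.

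What you have written is not a proof but an honest outline with its own obstacles flagged. Both obstacles are real and, as far as I know, unresolved. For the first claim, your probabilistic construction produces a table with approximately correct margins, but the repair step via local flips is exactly the hard part: the flip graph on $\BCT(\la,\mu,\nu)$ need not be connected, and even when it is, you would need to show that the random sample lands in a component containing a table with the \emph{exact} target margins. Your alternative route through $g(\la',\mu',\nu')\le \rB(\la,\mu,\nu)$ is circular in this paper's context, since generic Kronecker positivity is precisely the paper's Conjecture~\ref{c:Kron-many}.

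For the second claim, your reduction to plane-partition margins is correct, and the count of triples with all lengths $O(n^{1/3})$ is indeed $\exp O(n^{1/3}\log n)=o(p(n)^3)$. But the atypical contribution is the entire difficulty: $p_2(n)=\exp\Theta(n^{2/3})$ dwarfs $p(n)^3=\exp\Theta(n^{1/2})$, so one cannot simply bound the number of atypical plane partitions; one must bound the number of \emph{distinct margin triples} they generate, and no such large-deviation estimate is supplied. Until these two gaps are closed, the conjecture remains open, consistent with its status in the paper.
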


On the other hand, we believe that the Kronecker coefficients are
non-vanishing a.s.

\begin{conj}\label{c:Kron-many}
Denote by $c(n)$ the number of triples $(\la,\mu,\nu)$, such that
\ts $\la,\mu,\nu\vdash n$ \ts and \ts $g(\la,\mu,\nu)\ge 1$.
Then:
$$
\frac{c(n)}{p(n)^3} \. \to \. 1 \ \ \text{as} \ \ n\to \infty\ts.
$$
\end{conj}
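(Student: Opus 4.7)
The plan is a two-pronged attack: identify a combinatorial sufficient condition $C$ for $g(\la,\mu,\nu)\ge 1$ that is satisfied by all but $o(p(n)^3)$ triples, and handle the remaining triples with a second-moment argument on the Kronecker coefficient distribution.

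First, for the sufficient condition, I would build on Murnaghan's stability identity $g\bigl((N+|\mu|,\nu),(N+|\nu|,\mu),(N,\la)\bigr)=c^\la_{\mu\nu}$ (as in $\S$\ref{ss:fin-rem-explicit}), together with the semigroup property of Kronecker coefficients due to Manivel. Concretely, I would define $C(\la,\mu,\nu)$ as the condition that each of $\la,\mu,\nu$ has a sufficiently long first row and first column (say, both of length at least $n^{1/4}$) so that the triple can be written in the form produced by Murnaghan's theorem after trimming a suitable border. Since a uniformly random partition of $n$ has first row and first column of size $\Theta(\sqrt{n}\log n)$ with high probability (Erd\H{o}s--Lehner), essentially all triples pass this condition, and non-vanishing reduces to non-vanishing of Littlewood--Richardson coefficients, which in turn are typically nonzero by Knutson--Tao-type positivity arguments.

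Second, for the residual boundary triples where one partition is close to $(n)$ or $(1^n)$, I would use the identities
\[
\sum_{\la\vdash n} g(\la,\mu,\nu)\. f^\la \. = \. f^\mu f^\nu, \qquad \sum_{\la\vdash n} g(\la,\mu,\nu)^2 \. = \. \frac{1}{n!}\sum_{\si\in S_n} \chi^\mu(\si)^2\chi^\nu(\si)^2,
\]
combined with Cauchy--Schwarz. Standard upper bounds on non-identity character values of the symmetric group (Roichman, Fomin--Lulov) show that for typical $\mu,\nu$ the second sum is dominated by the $\si=e$ contribution, yielding $|\{\la:g(\la,\mu,\nu)>0\}|\ge (1-o(1))\ts p(n)$ for a $1-o(1)$ fraction of pairs $(\mu,\nu)$.

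The main obstacle, which I expect to be genuinely hard, is closing the gap from ``most $\la$ for most $(\mu,\nu)$'' to ``most triples.'' The second-moment machinery naturally produces a positive-density lower bound but loses constants, and tightening it to $1-o(1)$ requires either much sharper uniform character bounds in the regime $f^\la\approx\sqrt{n!}$, or a stronger deterministic non-vanishing criterion than any currently known. In fact, since even the Saxl conjecture for the single triple $(\rho_\ell,\rho_\ell,\rho_\ell)$ is open, any complete proof of this conjecture would almost certainly produce new positivity tools of independent interest.
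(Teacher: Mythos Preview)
The statement you are addressing is a \emph{conjecture} in the paper, not a theorem: the authors explicitly leave it open (see~$\S$\ref{ss:fin-rem-MO}), so there is no proof in the paper to compare your proposal against. Your write-up is, correspondingly, a research plan rather than a proof, and you yourself flag the central obstruction at the end.

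That said, the first prong of your plan has a real gap beyond the one you identify. Murnaghan's identity produces triples of the highly constrained shape $\bigl((N+|\mu|,\nu),(N+|\nu|,\mu),(N,\la)\bigr)$; a typical triple $(\la,\mu,\nu)$ of partitions of~$n$ does not look like this even after ``trimming a border,'' and having first row and first column of size $\ge n^{1/4}$ does not place you in this regime. The semigroup property lets you add good triples, not subtract from arbitrary ones, so you would need to exhibit, for a generic $(\la,\mu,\nu)$, a decomposition into summands each already known to be Kronecker-positive --- and no such decomposition scheme is available. Moreover, the assertion that LR--coefficients are ``typically nonzero by Knutson--Tao-type positivity arguments'' is not established anywhere; Horn-type inequalities characterize nonvanishing but do not by themselves give a density-$1$ statement for the relevant ensemble. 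Your second prong is more promising as a source of partial results (indeed it is in the spirit of the heuristics in~\cite[$\S$3]{PPY}), but as you note it loses the constant, and closing that gap is precisely the open problem.
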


\smallskip

These conjectures are motivated by Conjecture~8.3 in~\cite{PPV}
which states that $\ts \chi^\la[\mu]\ne 0\ts$~a.s.
In the opposite direction, it is known that the Kostka numbers
$K(\la,\mu)=0$~a.s. This is equivalent to the former
\emph{Wilf's conjecture} that the probability \ts $P(\la \dom\mu)\to 0$
\ts for uniform random \ts $\la,\mu\vdash n$.  This conjecture was
resolved by Pittel
in~\cite{Pit1} (see also recent~\cite{Pit2} for effective bounds).

\subsection{} \label{ss:fin-rem-pp-shape}
In the context of Section~\ref{sec:3CT-pyramids}, it is
worth noting that the limit shape for plane partitions is
well known~\cite{CK}, see also~\cite{Ok}.  The limit shape is totally symmetric
with margin curves corresponding to partitions $\la\vdash n$
with \ts
$f^\la = \sqrt{n!} \. e^{-O(n)}$.  This does not immediately
suggests that Conjecture~\ref{conj:Kron-sym-sum} holds, but
only that there is a large gap between the upper and lower
bounds.

\vskip.76cm

\subsection*{Acknowledgements}
We are grateful to Christine Bessenrodt, Sam Dittmer, Vadim Gorin,
Christian Krattenthaler, Han Lyu, Alejandro Morales,
Fedya Petrov and Damir Yeliussizov for
interesting discussions and useful comments.
We thank Brian Hopkins, John Machacek,
David Speyer and Gjergji Zaimi for their very helpful answers
to our MO question (see~$\S$\ref{ss:fin-rem-Pyr}), and to
Mercedes Rosas for telling us about the forthcoming paper~\cite{BR}.
We are especially thankful to Ernesto Vallejo for many helpful
historical and reference comments, and to Sasha Barvinok
for his help and guidance over the years on the ever-remarkable subject of
contingency tables.  The second author benefited from the cold Moscow
weather which pushed the first author to stay inside and work on the project.
Both authors were partially supported by the NSF.

\vskip.9cm

\newpage

\end{document}